\def\COMMENT#1{}
\let\COMMENT=\footnote
\newtheorem{question}{Question}
\newtheorem{problem}[question]{Problem}
\newtheorem{conjecture}[question]{Conjecture}
\newtheorem{theorem}[question]{Theorem}
\newtheorem{proposition}[question]{Proposition}
\newtheorem{lemma}[question]{Lemma}
\newtheorem{remark}[question]{Remark}
\newtheorem{claim}[question]{Claim}
\newtheorem{definition}[question]{Definition}
\newtheorem{construction}[question]{Construction}
\numberwithin{question}{section}
\numberwithin{equation}{section}
\title{On an extremal problem for locally sparse multigraphs}
\author{Victor Falgas-Ravry}\thanks{Institutionen f\"or Matematik och Matematisk Statistik, Ume{\aa} Universitet, Sweden.\\  \indent Email: {\tt victor.falgas-ravry@umu.se}. Research supported by VR grant 2016-03488.}
\begin{document}

\begin{abstract}
A multigraph $G$ is an $(s,q)$-graph if every $s$-set of vertices in $G$ supports at most $q$ edges of $G$, counting multiplicities. Mubayi and Terry posed the problem of determining the maximum of the product of the edge-multiplicities in an $(s,q)$-graph on $n$ vertices. We give an asymptotic solution to this problem for the family $(s,q)=(2r, a\binom{2r}{2}+\mathrm{ex}(2r, K_{r+1})-1 )$ with $r, a\in \mathbb{Z}_{\geq 2}$. This greatly generalises previous results on the problem due to Mubayi and Terry and to Day, Treglown and the author, who between them had resolved the special case $r=2$. Our result asymptotically confirms an infinite family of cases in (and overcomes a major obstacle to a resolution of) a conjecture of Day, Treglown and the author.\\
\noindent \textbf{Keywords:} extremal graph theory, multigraphs, asymptotic enumeration, extremal combinatorics.\\
\noindent \textbf{MSC class:}  05C35; 05C22; 05D99.
\end{abstract}
\maketitle	
\section{Introduction}
\subsection{Problem and results}
In this paper, we study a family of extremal problems for multigraphs that are \emph{locally sparse}, in the sense that for some $s\geq 2$, $s$-sets of vertices cannot support too many edges. Formally, we make the following definition:
\begin{definition}
	Given integers $s\geq 2 $ and $q \geq 0$,
	we say a multigraph $G=(V,w)$ is an \emph{$(s,q)$-graph} if every $s$-set of vertices in $V$ supports at most $q$ edges: $\sum _{xy \in X^{(2)}} w(xy) \leq q$ for every $X \in V^{(s)}$. We say such multigraphs have the \emph{$(s,q)$-property}, and denote by $\mathcal F(n,s,q)$ the collection of all $(s,q)$-graphs on the vertex  set $[n]$.
\end{definition}
In 1963, Erd{\H o}s~\cite{Erdos64} raised the question of determining the maximum number of edges an ordinary graph on $n$ vertices with the $(s,q)$-property could have. In the 1990s, Bondy and Tuza and Kuchenbrod considered a first generalisation of this Erd{\H o}s problem to multigraphs.
\begin{definition}
	Given integers $s\geq 2$ and $q\geq 0$, we define
	\begin{align*}
		\mathrm{ex} _\Sigma (n,s,q):= \max \{ e(G) : G \in \mathcal{F}(n,s,q)\} && \textrm{and}&&
		\mathrm{ex} _\Sigma (s,q):= \lim_{n \rightarrow \infty} \frac{\mathrm{ex} _{\Sigma} \left(n,s,q \right)}{\binom{n}{2}}.
	\end{align*}
\end{definition}
Bondy and Tuza~\cite{BondyTuza97} and Kuchenbrod~\cite{Kuchenbrod99} initiated the study of $\mathrm{ex}_{\Sigma}(n,s,q)$. Their results were vastly extended by F\"uredi and K\"undgen~\cite{FurediKundgen02}, who determined the asymptotics of $\mathrm{ex}_{\Sigma}(n,s,q)$ (i.e. the value of $\mathrm{ex}_{\Sigma}(s,q)$) for all pairs $(s,q)$ with $q$ sufficiently large, and in addition determined the exact value in many cases. We are interested in a different generalisation of Erd{\H o}s's question, raised in recent papers of Mubayi and Terry~\cite{MubayiTerry19, MubayiTerry20} with motivation coming from counting problems and applications of container theory to multigraphs.
\begin{definition}
	Given a multigraph $G=(V,w)$, let $P(G)$ denote the product of the edge-multiplicities in $G$,
	\[P(G):=\prod_{uv\in V^{(2)}}w(uv).\]
\end{definition}
\begin{definition}
	Given integers $s\geq 2$ and $q\geq 0$, we define
	\begin{align*}
		\mathrm{ex} _\Pi (n,s,q):= \max \{ P(G) : G \in \mathcal{F}(n,s,q)\} && \textrm{and}&&
		\mathrm{ex} _\Pi (s,q) := \lim_{n \rightarrow \infty} \Bigl( \mathrm{ex} _{\Pi} \left(n,s,q \right)\Bigr)^{1/ \binom{n}{2}}.
	\end{align*}
\end{definition}
\begin{problem}[Mubayi--Terry problem]\label{problem: Mubayi Terry}
Given a pair of integers $s\geq 2, q\geq 0$, determine $\mathrm{ex} _\Pi (s,q)$.
\end{problem}
One may think of $\mathrm{ex} _\Sigma (s,q)$ as the asymptotically maximal arithmetic mean of edge weights in an $(s,q)$-graph, while $\mathrm{ex}_{\Pi}(s,q)$ is the asymptotically maximal geometric mean. By the AM-GM inequality, it is immediate that 
\begin{align*}
\mathrm{ex}_{\Pi}(s,q)\leq \mathrm{ex}_{\Sigma}(s,q),
\end{align*} 
with equality attained if and only if $q=a\binom{s}{2}$ for some integer $a\geq 1$ (in which case both quantities are equal to $a$). But what happens when $q$ lies strictly between $a\binom{s}{2}$ and $(a+1)\binom{s}{2}$? The Mubayi--Terry problem can be rephrased as asking for the extent to which one may improve on the AM--GM inequality for $(s,q)$-graphs with $(s,q)$ in this range, and thereby the extent to which sum-maximisation and product-maximisation differ for these multigraphs.

Our main result in this paper is a resolution of the Mubayi--Terry problem for $(s,q)= (2r, a\binom{2r}{2}+\mathrm{ex}(2r, K_{r+1})-1)$, $a, r\in \mathbb{Z}_{\geq 2}$, where the Tur\'an number $\mathrm{ex}(n, K_{r+1})$ is the maximum number of edges in a $K_{r+1}$-free graph on $n$ vertices.
\begin{theorem}\label{theorem: d=1 asymptotic}
	For any $a,r\in \mathbb{Z}_{\geq 2}$, we have:
	\[\mathrm{ex}_{\Pi}\left(2r, a\binom{2r}{2}+\mathrm{ex}(2r, K_{r+1})-1\right) =a^{\frac{1-x_{\star}(r,1)}{r-1}}(a+1)^{\frac{r-2+x_{\star}(r,1)}{r-1}},\]
	where $x_{\star}(r,1):=\log\left(\frac{a+1}{a}\right)\Big/\log\left(\frac{(a+1)^r}{(a-1)^{r-1}a}\right)$.
\end{theorem}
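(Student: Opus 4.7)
I prove Theorem~\ref{theorem: d=1 asymptotic} by matching an explicit construction with a symmetrization-based upper bound. Write $x=x_{\star}(r,1)$ throughout.

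\emph{Construction.} Take $[n]=A\sqcup B_{1}\sqcup\cdots\sqcup B_{r-1}$ with $|A|=\lfloor xn\rfloor$ and the $B_{i}$ of near-equal size, and set
\[ w(uv) = \begin{cases} a-1 & \text{if } u,v\in A,\\ a & \text{if } u,v\in B_{i}\text{ for some }i,\\ a+1 & \text{otherwise.}\end{cases}\]
For $S\in [n]^{(2r)}$, setting $\alpha:=|S\cap A|$ and $n_{i}:=|S\cap B_{i}|$, a direct computation gives $\sum_{uv\in S^{(2)}}w(uv)=(a+1)\binom{2r}{2}-2\binom{\alpha}{2}-\sum_{i}\binom{n_{i}}{2}$. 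Thus the $(2r,q)$-property reduces to the elementary inequality $2\binom{\alpha}{2}+\sum_{i}\binom{n_{i}}{2}\geq r+1$ for every admissible $(\alpha,n_{1},\ldots,n_{r-1})$ with $\alpha+\sum_{i}n_{i}=2r$, which a short case analysis confirms: the minimum $r+1$ is attained at $(\alpha;n_{i})=(1;3,2,\ldots,2)$ and at $(2;2,\ldots,2)$. Computing the geometric mean yields
\[ \lim_{n\to\infty}P(G)^{1/\binom{n}{2}}=(a-1)^{x^{2}}\,a^{(1-x)^{2}/(r-1)}\,(a+1)^{2x(1-x)+(r-2)(1-x)^{2}/(r-1)},\]
and differentiating the logarithm in $x$ shows its critical point is exactly $x_{\star}(r,1)$; at this critical $x$ the expression reduces to the formula claimed in the theorem.

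\emph{Upper bound.} For the matching upper bound I follow the Zykov-symmetrization plus Lagrangian-optimization strategy of Mubayi--Terry and of Day, Treglown and the author (who treated $r=2$). Given an asymptotically extremal $(2r,q)$-graph $G_{n}$, iterative symmetrizing swaps that preserve the $(s,q)$-property and do not decrease $P(\cdot)$ reduce the problem to an optimization over blow-ups of finite type multigraphs $M$ on $k$ vertices, with integer edge weights $m_{ij}\in\mathbb{Z}_{\geq 0}$ and vertex fractions $\alpha\in\Delta_{k}$. The $(s,q)$-property becomes the finite linear system
\[ \sum_{i}\binom{n_{i}}{2}m_{ii}+\sum_{i<j}n_{i}n_{j}m_{ij}\leq q \quad\text{for each }(n_{i})\text{ with }\sum_{i}n_{i}=2r,\]
and the objective becomes $\sum_{i\leq j}c_{ij}(\alpha)\log m_{ij}$ for explicit pair-fractions $c_{ij}(\alpha)$. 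One then solves this constrained optimization and identifies the maximum with the blow-up corresponding to the construction above (an $r$-class type graph with one ``$A$-class'' of fraction $x_{\star}$ and $r-1$ symmetric ``$B_{i}$-classes'').

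\emph{Main obstacle.} The hard step is this final one: proving that no alternative type multigraph gives a strictly larger geometric mean. Crucial to the argument is the qualitative feature that in the extremal configuration the $a-1$ edges live on a single common ``$A$-set'' shared across all $B_{i}$-parts, rather than being split into $r-1$ local $A_{i}\subseteq B_{i}$-subsets as a direct $(r-1)$-fold product of the $r=2$ construction would suggest. Establishing this rigorously involves a KKT-type case analysis of the finite constrained optimization, ruling out type graphs with edge weights far from $a$ (such as $a\pm 2$) and asymmetric class distributions; it exploits the strict concavity of $\log$ on $\mathbb{Z}_{\geq 0}$ and the local $\mathrm{ex}(2r,K_{r+1})$-style structure encoded in the linear constraints. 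This is precisely the ``major obstacle'' to the conjecture of Day, Treglown and the author referenced in the abstract.
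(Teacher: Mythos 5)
Your construction is correct: it is exactly Construction~\ref{construction: lower bound} with $d=1$ (parts $V_{0}=A$ of size $x_{\star}n$ with edge-weight $a-1$ inside, $V_{1},\ldots,V_{r-1}=B_{1},\ldots,B_{r-1}$ of near-equal size with edge-weight $a$ inside, and weight $a+1$ across), and your verification of the $(2r,q)$-property via $2\binom{\alpha}{2}+\sum_{i}\binom{n_{i}}{2}\geq r+1$ and your identification of the optimal $x$ as $x_{\star}(r,1)$ both check out. This part agrees with the paper.

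\textbf{Upper bound: a genuine gap.} Your ``proof'' of the matching upper bound is a programme, not an argument, and you yourself flag the central step---showing that no alternative finite-type multigraph beats the one above in the constrained log-product optimization---as the ``hard step'' and ``main obstacle,'' offering only the phrase ``KKT-type case analysis'' rather than an argument. That is precisely the part of the theorem that constitutes its content. Worse, the route you propose (Zykov-type symmetrization reducing the problem to a \emph{finite} type multigraph, followed by a Lagrangian optimization) is not the route the paper takes, and it is not clear it can be made to work. Symmetrizing swaps that clone one vertex onto another do not in general preserve the $(s,q)$-property of a multigraph; the paper only establishes a very restricted form of such a cloning step (Proposition~\ref{prop: reducing to good graphs with clique decompositions}, which clones only across edges of the minimum multiplicity $a-d$, and even that requires the stronger nested property $\mathcal{G}(n,s,\Sigma_{r,d}(a,s))$ secured by Proposition~\ref{prop: good multigraphs}). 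Nothing in your sketch explains why symmetrization should terminate in a bounded number of clone-classes in the multigraph $(s,q)$ setting, so the reduction to a finite Lagrangian optimization is unjustified.

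The paper's actual proof proceeds entirely differently: by induction on $r$ with base case $r=2$ from prior work. Via Proposition~\ref{prop: degree removal}, the inductive step reduces to exhibiting a single vertex of small product-degree in every $(2r+2,\Sigma_{r+1,1}(a,2r+2))$-multigraph. After passing to the nicer class $\mathcal{H}$ (Propositions~\ref{prop: good multigraphs} and~\ref{prop: reducing to good graphs with clique decompositions}), one fixes a vertex $x$ and examines its $(a+1)$-neighbourhood $Y$. If $G[Y]$ is product-sparse one wins by the optimization Lemma~\ref{lemma: if a+1 neighbourhoods are sparse, done} (this is where the inductive hypothesis enters); otherwise one extracts ``good'' complete $r$-partite structures inside $Y$ and runs carefully weighted geometric-mean averaging over their vertices (Lemmas~\ref{lemma: if contain good Kr structure, then done},~\ref{lemma: if exists lowdeg vertex in YcupZ then done}, and the case analysis of Section~\ref{section: finding r-partite} with Lemmas~\ref{lemma: t>0}--\ref{lemma: exists good P4 in NW}). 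None of this work is captured by---or obviously replaceable by---the symmetrization-plus-Lagrangian outline you give. As it stands, your upper bound is missing entirely.
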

Theorem~\ref{theorem: d=1 asymptotic} greatly extends earlier results due to Mubayi and Terry~\cite[Theorem~2.4]{MubayiTerry19} and Day, Falgas--Ravry and Treglown~\cite[Theorem ~3.5]{DayFalgasRavryTreglown20+}, who had  established it in the special cases where $r=2$, $a=2$ and where $r=2$, $a\geq 2$ respectively. Further, Theorem~\ref{theorem: d=1 asymptotic} asymptotically confirms an infinite family of cases in a conjecture of Day, Falgas--Ravry and Treglown (see Conjecture~\ref{conjecture: main conjecture} in Section~\ref{subsection: previous work}), and overcomes one of the two main obstacles towards that conjecture identified by these authors by tackling cases where the extremal multigraphs have a much more complicated structure than had previously been managed (the other obstacle being cases where the extremal multigraphs feature a broader range of edge multiplicities --- see the discussion at the beginning of Section~\ref{section: other questions and conjectures}).

A striking feature of the result, which was noted by Mubayi and Terry, is that (assuming Schanuel's conjecture from number theory) the quantity $x_{\star}(r,1)$ is transcendental for all $r,a\geq 2$ (see~\cite[Proposition A.2]{DayFalgasRavryTreglown20+} for a proof of this fact). In particular, the extremal constructions for Theorem~\ref{theorem: d=1 asymptotic} feature partitions of the vertex-set $[n]$ into a number of parts each of which contains an asymptotically transcendental proportion of the vertices. While not wholly surprising given the product setting of the Mubayi--Terry problem, this is still an unusual feature in extremal combinatorics.

As an application of Theorem~\ref{theorem: d=1 asymptotic}, we also obtain the following Erd{\H o}s--Kleitman--Rothschild-type counting results for multigraphs:
\begin{theorem}\label{theorem: counting}
For all integers $a,r \geq 2$, we have: 
\[\left\vert \mathcal{F}\left(n,2r, (a-1)\binom{2r}{2} +\mathrm{ex}(2r, K_{r+1})-1 \right)\right\vert = \left(a^{\frac{1-x_{\star}(r,1)}{r-1}}\left(a+1\right)^{\frac{r-2+x_{\star}(r,1)}{r-1}}\right)^{(1+o(1))\binom{n}{2}}.\]	
\end{theorem}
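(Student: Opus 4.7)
Write $q := a\binom{2r}{2} + \mathrm{ex}(2r, K_{r+1}) - 1$ (the parameter of Theorem~\ref{theorem: d=1 asymptotic}) and $q' := q - \binom{2r}{2} = (a-1)\binom{2r}{2} + \mathrm{ex}(2r, K_{r+1}) - 1$ (the parameter of Theorem~\ref{theorem: counting}). The pivotal observation is that the ``shift by $\mathbf{1}$'' map $G \mapsto G + \mathbf{1}$, which increments every edge-multiplicity by $1$, is a bijection between $\mathcal{F}(n, 2r, q')$ and $\mathcal{F}^+(n, 2r, q) := \{H \in \mathcal{F}(n, 2r, q) : w_H(uv) \geq 1 \text{ for every pair } uv\}$, since in any $2r$-set the shift adds exactly $\binom{2r}{2}$ to the total edge-weight. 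For the lower bound in Theorem~\ref{theorem: counting}, I take an extremal $(2r, q)$-graph $G^\star$ supplied by Theorem~\ref{theorem: d=1 asymptotic}, whose multiplicities lie in $\{a, a+1\}$, and for each pair $uv$ independently pick a weight $w(uv) \in \{0, 1, \ldots, w_{G^\star}(uv) - 1\}$. The resulting multigraph satisfies $w \leq w_{G^\star} - \mathbf{1}$ pointwise, hence $\sum_{uv \in X^{(2)}} w(uv) \leq q - \binom{2r}{2} = q'$ on every $2r$-set $X$, so lies in $\mathcal{F}(n, 2r, q')$. The number of such choices is $\prod_{uv} w_{G^\star}(uv) = P(G^\star) = \mathrm{ex}_\Pi(n, 2r, q) = \mathrm{ex}_\Pi(2r, q)^{(1+o(1))\binom{n}{2}}$, matching the claimed expression via Theorem~\ref{theorem: d=1 asymptotic}.

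\textbf{Upper bound strategy.} By the bijection it suffices to bound $|\mathcal{F}^+(n, 2r, q)|$ from above. Theorem~\ref{theorem: d=1 asymptotic} yields $P(H) \leq \mathrm{ex}_\Pi(n, 2r, q)$ for every $H \in \mathcal{F}^+(n, 2r, q)$, but this is too weak on its own: the number of positive-integer $N$-tuples with product at most $c^N$ grows like $(ec\log c)^N$ rather than $c^N$. I therefore plan to follow the container-plus-stability paradigm deployed by Mubayi and Terry~\cite{MubayiTerry19, MubayiTerry20} and by Day, Falgas-Ravry and Treglown~\cite{DayFalgasRavryTreglown20+} in the case $r = 2$: construct a family $\mathcal{C}$ of ``containers'' $C \colon \binom{[n]}{2} \to 2^{\mathbb{Z}_{\geq 1}}$ with $|\mathcal{C}| \leq 2^{o(\binom{n}{2})}$ such that every $H \in \mathcal{F}^+(n, 2r, q)$ has $w_H(uv) \in C(uv)$ for all $uv$ and some $C \in \mathcal{C}$, and $\prod_{uv} |C(uv)| \leq \mathrm{ex}_\Pi(n, 2r, q)^{1 + o(1)}$ for each $C$. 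Summing over $\mathcal{C}$ then delivers the matching upper bound.

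\textbf{Main obstacle.} The hard part is constructing these containers, which in turn rests on a stability version of Theorem~\ref{theorem: d=1 asymptotic}: every $H \in \mathcal{F}(n, 2r, q)$ with $\log P(H) \geq (1-\delta) \log \mathrm{ex}_\Pi(n, 2r, q)$ must be structurally close (in edit distance on weighted pairs) to the extremal template, namely a balanced partition of $[n]$ into $r-1$ parts carrying the prescribed Tur\'an-type structure on the edges of multiplicity $a+1$. I would extract this stability from the Lagrangian-type optimisation driving Theorem~\ref{theorem: d=1 asymptotic} by quantifying how near-maximality in the objective forces closeness to its unique optimiser. Once stability is in hand, the template enumeration (polynomially many templates, each with at most $\mathrm{ex}_\Pi(n, 2r, q)^{o(1)}$ admissible perturbations) and the disposal of atypical multigraphs follow standard Kleitman--Rothschild-type moves, paralleling the $r=2$ treatment of~\cite{MubayiTerry19, DayFalgasRavryTreglown20+}.
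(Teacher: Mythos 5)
Your lower bound (shift-by-$\mathbf{1}$ bijection plus independent perturbations of the extremal multigraph) is correct, but for the upper bound your proposal re-derives, in the special case $s = 2r$, a general result that the paper simply invokes: Mubayi and Terry's~\cite[Theorem 2.2]{MubayiTerry19}, displayed in the paper as~\eqref{eq: MT counting}, which gives
$\bigl\vert \mathcal{F}\bigl(n,s,q-\binom{s}{2}\bigr)\bigr\vert = \mathrm{ex}_\Pi(s,q)^{\binom{n}{2}+o(n^2)}$
for every $q>\binom{s}{2}$, by a hypergraph-container argument. Once Theorem~\ref{theorem: d=1 asymptotic} pins down $\mathrm{ex}_\Pi(2r, a\binom{2r}{2}+\mathrm{ex}(2r,K_{r+1})-1)$, Theorem~\ref{theorem: counting} is literally one substitution away with no new work. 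Your container-plus-stability outline would eventually succeed, but it demands machinery that is not actually required: the $\exp(o(n^2))$-level counting in~\eqref{eq: MT counting} follows from the container theorem together with supersaturation (many bad $s$-sets once the log-product exceeds the extremal value), and does not need a stability theorem describing the structure of near-extremal $(2r,q)$-graphs. Stability would become essential only for a stronger \emph{typical structure} statement, which Theorem~\ref{theorem: counting} does not claim. Also, a small slip: you assert that the extremal multigraph supplied by Theorem~\ref{theorem: d=1 asymptotic} has all multiplicities in $\{a,a+1\}$, but the product-extremal multigraphs of $\mathcal{T}_{r,1}(a,n)$ additionally have a (small but non-empty) part $V_0$ of size $(x_\star(r,1)+o(1))n$ inside which edges have multiplicity $a-1$; this does not harm your lower-bound count, but the description should be corrected.
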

Further, using a result from~\cite{DayFalgasRavryTreglown20+}, we can derive some further asymptotic cases of a conjecture of Day, Falgas-Ravry and Treglown from Theorem~\ref{theorem: d=1 asymptotic} --- see Theorem~\ref{theorem: further cases} below.

\subsection{Motivation}
The Mubayi--Terry problem is a natural alternative generalisation of the Erd{\H o}s question on the number edges in graphs with the $(s,q)$-property to multigraphs. In addition, it is the ``right'' generalisation insofar as counting problems are concerned. In their classical 1976 paper, Erd{\H o}s, Kleitman and Rothschild~\cite{ErdosKleitmanRotschild76} established that the number of $K_r$-free graphs on $[n]$ is
\[ 2^{\mathrm{ex}(n, K_r)+o(n^2)}.\]
Since this foundational work, there has been a great interest from the research community in estimating the size and characterizing the typical structure of graphs in monotone properties or hereditary properties. The paradigmatic heuristic guiding work in the area is that both size and typical structure should be determined by the size and structure of `extremal' graphs in the property.

In a spectacular breakthrough in 2015, Balogh, Morris and Samotij~\cite{BaloghMorrisSamotij15} and Saxton and Thomason~\cite{SaxtonThomason15} developed powerful theories of hypergraph containers, which have since had a myriad of applications within extremal combinatorics (for more details, see the ICM survey of Balogh, Morris and Samotij~\cite{BaloghMorrisSamotij18}). Using hypergraph containers, one can make the aforementioned heuristic rigorous: given an extremal result and a supersaturation result for a given hereditary graph property, the theory of container immediately implies a counting result for the number of graphs in that property; further, this implication of container theory holds not just for hereditary properties of graphs but also for hereditary properties for a much larger class of objects  (see for instance~\cite{FalgasRavryOConnellUzzell19,Terry18} for general implications of container theory).

In particular, Mubayi and Terry realised that to prove analogues of the Erd{\H o}s--Kleitman--Rotschild theorem (which is about counting graphs with the $(r, \binom{r}{2}-1)$-property) and estimate the number of $(s,q)$-graphs on $n$ vertices, one must determine not the sum-extremal quantity $\mathrm{ex}_{\Sigma}(s,q)$ determined by F\"uredi and K\"undgen in~\cite{FurediKundgen02}, but rather the product-extremal quantity $\mathrm{ex}_{\Pi}(s,q)$. More precisely, Mubayi and Terry showed in~\cite[Theorem 2.2]{MubayiTerry19} that for $q>\binom{s}{2}$,
\begin{align}\label{eq: MT counting}
\Bigl\vert \mathcal{F}\bigl(n,s,q-\binom{s}{2}\bigr)\Bigr\vert = \mathrm{ex} _\Pi (s,q)^{\binom{n}{2}+o(n^2)}.
\end{align}
Thus the Erd{\H o}s--Kleitman--Rothschild-type problem of estimating the size of the multigraph family $\mathcal{F}(n,s,q-\binom{s}{2})$ is equivalent to the Tur\'an-type  problem of determining $\mathrm{ex} _\Pi (s,q)$. This motivated Mubayi and Terry's introduction of Problem~\ref{problem: Mubayi Terry}, and also shows how Theorem~\ref{theorem: counting} is an immediate corollary of Theorem~\ref{theorem: d=1 asymptotic}.

\subsection{Previous work and extremal constructions}\label{subsection: previous work}
Mubayi and Terry resolved Problem~\ref{problem: Mubayi Terry} in the cases where $a\binom{s}{2}-\frac{s}{2}\leq q\leq a\binom{s}{2}+s-2$, $a\in \mathbb{Z}_{\geq 2}$~\cite[Theorems 3--4]{MubayiTerry20}, $(s,q)=(4, 9)$~\cite[Theorem 5]{MubayiTerry20} and $(s,q)=(4, 15)$~\cite[Theorem 2.4]{MubayiTerry19}. Day, Falgas-Ravry and Treglown~\cite[Theorem 3.5]{DayFalgasRavryTreglown20+} extended the latter result, determining $\mathrm{ex}_{\Pi}(s,q)$ when $(s,q)=(4, a\binom{4}{2}+3)$ with $a\in \mathbb{Z}_{\geq 2}$. In addition the same authors determined $\mathrm{ex}_{\Pi}(s,q)$ when $(s,q)=(s, a\binom{s}{2} + 2s-5)$ with $s\in \{5,6, 7\}$ \cite[Theorems 3.6--3.8]{DayFalgasRavryTreglown20+}, and  when $q=a\binom{s}{2}+\mathrm{ex}(s, K_{r+1})$ with $a\in \mathbb{Z}_{\geq 1}$, $s>r\geq 2$ \cite[Theorem 3.10]{DayFalgasRavryTreglown20+}. Further, they formulated a general conjecture on the value of $\mathrm{ex}_{\Pi}(s,q)$ for many values of $(s,q)$, which guides our work in the present paper.

To state their conjecture precisely, we must first introduce some notation and define certain families of constructions. For $n\in \mathbb{N}$, write $[n]$ as a shorthand for the set $\{1,2,\ldots ,n\}$. Day, Falgas-Ravry and Treglown considered the following family of constructions in~\cite{DayFalgasRavryTreglown20+}:
\begin{construction}\label{construction: lower bound}
	Let $a,r \in \mathbb{N}$ and $d\in \{0\}\cup[a-1]$. Given $n \in \mathbb{N}$, let $\mathcal{T}_{r,d}(a,n)$ denote the collection of multigraphs $G$ on $[n]$ for which $V(G)$ can be partitioned into $r$ parts $V_0, \ldots ,V_{r-1}$ such that:
	\begin{enumerate}[(i)]
		\item all edges in $G[V_0]$ have multiplicity $a-d$;
		\item for all $i\in [r-1]$,  all edges in $G[V_i]$ have multiplicity $a$;
		\item all other edges of $G$ have multiplicity $a+1$.
	\end{enumerate} 
	Given $G\in \mathcal{T}_{r,d}(a,n)$, we refer to $\sqcup_{i=0}^{r-1}V_i$ as the \emph{canonical partition} of $G$.  
\end{construction}	
\begin{definition}
	Let $\Sigma_{r,d}(a,n)$ and $\Pi_{r,d}(a,n)$ respectively denote the maximal edge-sum and the maximal edge-product that can be achieved inside $\mathcal{T}_{r,d}(a,n)$,
	\begin{align*}
	\Sigma_{r,d}(a,n) := \max \{e(G) : G  \in \mathcal{T}_{r,d}(a,n)\}, &&
	\Pi_{r,d}(a,n) := \max \{P(G) : G  \in \mathcal{T}_{r,d}(a,n)\}.
	\end{align*}
\end{definition}
As shown in~\cite{DayFalgasRavryTreglown20+}, the sum- and product-extremal multigraphs in $\mathcal{T}_{r,d}(a,n)$ have in general very different canonical partitions. Indeed~\cite[(3.1)--(3.3)]{DayFalgasRavryTreglown20+}, in sum-maximising multigraphs $\vert V_0\vert = \frac{1}{d(r-1)+r}n+o(n)$, while in product maximising multigraphs $\vert V_0\vert =x_{\star}(r,d)n+o(n)$, where $x_{\star}$ is the function of $a,r,d$ given by
\begin{align}\label{eq: def of xstar} x_{\star}(r,d):= \frac{\log\left(\frac{a+1}{a}\right)}{\log \left(\frac{(a+1)^r}{(a-d)^{r-1}a}\right)},\end{align}
which is strictly smaller than $\frac{1}{d(r-1)}$~\cite[Proposition 5.4]{DayFalgasRavryTreglown20+}; in both cases, the remaining parts $V_1$ to $V_{r-1}$ have balanced sizes. Substituting in the values of the optimal part sizes for sum-maximisation and product-maximisation respectively, one obtains the following asymptotic expressions for $\Sigma_{r,d}(a,n)$ and $\Pi_{r,d}(a,n)$:
\begin{align}\label{ineq: sigma and pi estimate}
\Sigma_{r,d}(a,n)=\left(a+1-\frac{d+1}{(r-1)d+r}+o(1)\right)\binom{n}{2}, && \Pi_{r,d}(a,n)= \left(a^{\frac{1-x_{\star}(r,d)}{r-1}} (a+1)^{\frac{r-2+x_{\star}(r,d)}{r-1}+o(1)}\right)^{\binom{n}{2}}.
\end{align}
Since multigraphs in $\mathcal{T}_{r,d}(a,n)$ have the $(s,\Sigma_{r,d}(a,s))$-property, it is immediate that
\begin{align}\label{eq: trivial lower bound on expi}
\mathrm{ex}_{\Pi}(n, s, \Sigma_{r,d}(a,s)) \geq \Pi_{r,d}(a,n).
\end{align} Day, Falgas-Ravry and Treglown conjectured~\cite[Conjecture 3.2]{DayFalgasRavryTreglown20+} that this lower bound is tight for $s,n$ sufficiently large, i.e.\ that  $\mathrm{ex}_{\Pi}(n, s, \Sigma_{r,d}(a,s))$ is attained by product-extremal multigraphs from $\mathcal{T}_{r,d}(a,n)$.
\begin{conjecture}[Day, Falgas-Ravry and Treglown~\cite{DayFalgasRavryTreglown20+}]\label{conjecture: main conjecture} 
	For all integers $a,r,s, d$ with $a,r \geq 1$, $d\in \{0\}\cup[a-1]$, $s\geq (r-1)(d+1)+2$ and all $n$ sufficiently large,
	\[ \mathrm{ex}_{\Pi}(n, s, \Sigma_{r,d}(a,s)) = \Pi_{r,d}(a,n).\]
\end{conjecture}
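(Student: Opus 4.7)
The conjecture asserts an exact Tur\'an-type equality: the product-extremal multigraph over $\mathcal{F}(n, s, \Sigma_{r,d}(a,s))$ lies in the class $\mathcal{T}_{r,d}(a,n)$. The lower bound $\mathrm{ex}_\Pi(n, s, \Sigma_{r,d}(a,s)) \geq \Pi_{r,d}(a,n)$ is immediate from the construction: take a product-maximising $G \in \mathcal{T}_{r,d}(a,n)$, so that $\vert V_0 \vert \approx x_{\star}(r,d)\, n$ and $\vert V_i \vert \approx \frac{1-x_{\star}(r,d)}{r-1}\, n$ for $i \geq 1$. Any $s$-subset $S \subseteq V(G)$ inherits a restriction of the canonical partition, so $G[S]$ lies in $\mathcal{T}_{r,d}(a,s)$ and hence $e(G[S]) \leq \Sigma_{r,d}(a,s)$, confirming $G \in \mathcal{F}(n, s, \Sigma_{r,d}(a,s))$.

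The upper bound is the hard direction, and I would approach it in three stages. \emph{(i) Asymptotic reduction.} For a sequence of extremal $G_n \in \mathcal{F}(n,s,\Sigma_{r,d}(a,s))$, pass to a weighted ``multi-graphon'' limit $W$ via a compactness argument applied to the renormalised log-multiplicity $\tfrac{1}{\log(a+1)}\log w(uv)$. Averaging the $(s,q)$-constraint over all $s$-tuples gives a global integral bound on $W$, while iterated local smoothings --- swapping multiplicities between pairs of edges that span the same vertex classes, in a way that preserves the $(s,q)$-property and weakly increases the product --- should force $W$ to be a step function taking only the three values $\{a-d, a, a+1\}$ on a partition of $[0,1]$ into $r$ parts, mirroring Construction~\ref{construction: lower bound}. \emph{(ii) Finite-dimensional optimisation.} Subject to this structural reduction, the problem reduces to a Lagrangian computation in the part sizes, yielding $|V_0|/n \to x_\star(r,d)$ and the claimed value of $\mathrm{ex}_\Pi(s, \Sigma_{r,d}(a,s))$. \emph{(iii) Asymptotic-to-exact upgrade.} Use a stability statement to argue that any near-extremal $G$ admits a partition that differs from the canonical one on only $o(n^2)$ edges, then run a vertex-by-vertex local modification argument (remove a vertex, re-insert it into the optimal part) to erase residual edges of ``wrong'' multiplicity and conclude the exact equality for $n$ sufficiently large.

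The principal obstacle is stage~(i): the structural analysis of extremal multigraphs. For $d=0$, extremal examples have a symmetric Tur\'an-type structure with only the two multiplicities $\{a, a+1\}$, and the local smoothing step is relatively direct. For $d \geq 1$, the distinguished class $V_0$ has a genuinely transcendental density $x_\star(r,d)$, meaning the optimum has non-trivial interior critical behaviour and one cannot reduce to a nearly-complete multigraph. Theorem~\ref{theorem: d=1 asymptotic} handles precisely the case $d=1$, $s=2r$, and per the authors' remarks overcomes one of the two main obstacles to the conjecture; the Tur\'an number $\mathrm{ex}(2r, K_{r+1})$ enters through a Tur\'an-type analysis of which distributions of $s=2r$ vertices across the $r$ parts make the $(s,q)$-constraint tight. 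Extending the structural analysis to arbitrary $d$, and in parallel upgrading from asymptotic to exact values, are the two principal hurdles remaining on the road to the full conjecture, with the former being the one I expect to consume most of the effort.
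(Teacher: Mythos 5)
This statement is a \emph{conjecture}, not a theorem of the paper. The paper does not prove it, and to date it remains open; the paper establishes only the asymptotic version of the special case $s=2r$, $d=1$ (Theorem~\ref{theorem: d=1 asymptotic}), plus some further asymptotic cases for $a$ large (Theorem~\ref{theorem: further cases}). So there is no ``paper's own proof'' against which your attempt can be compared, and any purported proof of the full conjecture must be regarded as unverified until checked in detail.

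Turning to the content of what you wrote: the lower-bound paragraph is fine and is the standard observation (any $s$-subset of $G\in\mathcal{T}_{r,d}(a,n)$ inherits the canonical partition, so lies in $\mathcal{T}_{r,d}(a,s)$ and spans at most $\Sigma_{r,d}(a,s)$ edges). The upper bound, however, is where the entire difficulty of the conjecture lives, and your stages (i)--(iii) are a research programme rather than a proof: no compactness argument or multi-graphon framework is set up, ``iterated local smoothings'' are not specified nor is it established that they preserve the $(s,q)$-property or terminate in a step function, the claimed reduction to three multiplicities $\{a-d,a,a+1\}$ is not justified, and the stability/upgrade stage presupposes the asymptotic result you are trying to prove. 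So there is a genuine and large gap: the heart of the argument is simply absent. It is also worth noting that your proposed route differs from what the paper actually does in the cases it \emph{can} prove. The paper does not use graphon limits or smoothing at all: it works with the ``clone-reduced'' class $\mathcal{H}(n,s,\Sigma_{r,d}(a,s))$, induces on $r$, reduces (via Proposition~\ref{prop: degree removal}) to exhibiting a single vertex of low product-degree, and then runs a dichotomy — either the $(a{+}1)$-neighbourhood of a vertex is product-sparse (Lemma~\ref{lemma: if a+1 neighbourhoods are sparse, done}) or it contains a good complete $r$-partite structure (Lemma~\ref{lemma: if contain good Kr structure, then done}), in either case yielding the required vertex by weighted geometric averaging. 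If you wish to pursue the conjecture, I would suggest aligning your plan with that inductive, finite-combinatorial framework, since a graphon-style compactness reduction (in particular the ``local smoothings'' step) would itself require essentially all of the structural information that the hard part of the argument is supposed to produce.
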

\begin{remark}
Observe that for any $r\geq 2$, $\Sigma_{r,1}(a,2r)=a\binom{2r}{2}+\mathrm{ex}(2r, K_{r+1})-1$, so that Theorem~\ref{theorem: d=1 asymptotic} asymptotically confirms this conjecture when $s=2r$ and $d=1$.	
\end{remark}	
The condition $s\geq (r-1)(d+1)+2$ in Conjecture~\ref{conjecture: main conjecture} above is related to the minimum `size' of $s$ such that sum-extremal $s$-vertex subgraphs can tell $\mathcal{T}_{r,d}(a,n)$ apart from $\mathcal{T}_{r',d'}(a,n)$ with $r<r'$ or $r=r'$ and $d'<d$. In addition to Construction~\ref{construction: lower bound}, the authors of~\cite{DayFalgasRavryTreglown20+} also provided some other families of constructions to bridge some of the gaps and cover $q$-s lying in the intervals between successive values of $\Sigma_{r,d}(a,s)$. These are however significantly more intricate --- they correspond to iterated versions of Construction~\ref{construction: lower bound} --- and do not give a complete picture; see the discussion in Section~\ref{section: other questions and conjectures}.

Extending earlier results of Mubayi and Terry, Day, Falgas-Ravry and Treglown showed Conjecture~\ref{conjecture: main conjecture} is true for
	\begin{itemize}
		\item $d=0$ and all $a\geq 1$, $s>r\geq 1$~\cite[Theorem 3.10]{DayFalgasRavryTreglown20+} --- this can be viewed as a multigraph  generalisation of Tur\'an's theorem, with $q=a\binom{s}{2}+\mathrm{ex}(s, K_{r+1})$;
		\item $d=1,\  r=2,\  s\in \{4,5,6, 7\}$ and all $a\geq 2$~\cite[Theorems 3.5--3.8]{DayFalgasRavryTreglown20+}. 
	\end{itemize}
Further they showed~\cite[Theorem 3.11]{DayFalgasRavryTreglown20+} that for $r,d$ fixed the `base cases' $s=(r-1)(d+1)+2$, $a\geq d+1$ of Conjecture~\ref{conjecture: main conjecture} implies the `higher cases' $s'>(r-1)(d+1)+2$ hold for all $a>a_0=a_0(r,d, s')$ sufficiently large. As an immediate corollary of this latter result and of Theorem~\ref{theorem: d=1 asymptotic} proved in this paper, we have the following:
\begin{theorem}\label{theorem: further cases}
For all integers $r,s$ with $s\geq 2r\geq 4$ and all positive integers $a$ sufficiently large,
\[\mathrm{ex}_{\Pi}(s, \Sigma_{r,1}(a,s))=(a+1)^{\frac{r-2+x_{\star}(r,1)}{r-1}} a^{\frac{1-x_{\star}(r,1)}{r-2}}.\]	
\end{theorem}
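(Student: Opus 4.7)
The plan is to derive Theorem~\ref{theorem: further cases} by combining Theorem~\ref{theorem: d=1 asymptotic} of the present paper with the implication result [Theorem 3.11]~\cite{DayFalgasRavryTreglown20+} of Day, Falgas-Ravry and Treglown. The excerpt already signals that the former is the final ingredient required to extract the latter; my proposal simply makes the combination explicit.

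First I observe that when $d = 1$ the restriction $s \geq (r-1)(d+1)+2$ in Conjecture~\ref{conjecture: main conjecture} becomes $s \geq 2r$, so the ``base case'' in this family occurs at $s = 2r$. Theorem~\ref{theorem: d=1 asymptotic} asymptotically settles this base case for every $a, r \geq 2$: it identifies the limit $\lim_{n \to \infty} \mathrm{ex}_\Pi(n, 2r, \Sigma_{r,1}(a,2r))^{1/\binom{n}{2}}$ with $a^{(1 - x_\star(r,1))/(r-1)}(a+1)^{(r-2+x_\star(r,1))/(r-1)}$, which coincides exactly with the limiting product-density $\lim_{n \to \infty} \Pi_{r,1}(a,n)^{1/\binom{n}{2}}$ achieved by the product-extremal members of Construction~\ref{construction: lower bound} with $d = 1$.

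Next I invoke [Theorem 3.11]~\cite{DayFalgasRavryTreglown20+}: once the base case of Conjecture~\ref{conjecture: main conjecture} is known for a fixed pair $(r, d)$ at $s = (r-1)(d+1)+2$, it propagates to every larger $s' > (r-1)(d+1)+2$ provided $a$ exceeds some threshold $a_0 = a_0(r, d, s')$. Specialising to $d = 1$ and feeding in the base case supplied by Theorem~\ref{theorem: d=1 asymptotic} delivers the desired value of $\mathrm{ex}_\Pi(s, \Sigma_{r,1}(a, s))$ for every $r, s$ and every sufficiently large $a$.

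The one point I would want to verify carefully is that [Theorem 3.11]~\cite{DayFalgasRavryTreglown20+}, which is phrased for the exact form of Conjecture~\ref{conjecture: main conjecture} (equality for all sufficiently large $n$), remains valid when its hypothesis is supplied only in the asymptotic form provided by Theorem~\ref{theorem: d=1 asymptotic} (i.e.\ as an equality of the $n \to \infty$ limits). Since the reduction from $s$ to $s'$ in their proof proceeds by averaging edge-products over $s$-subsets inside $s'$-sets, and since both the hypothesis and the conclusion concern the same closed-form quantity $\Pi_{r,d}(a,n)^{1/\binom{n}{2}}$, inspecting their argument should confirm that it passes through unchanged in the asymptotic setting. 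This is the only substantive obstacle, and I expect it to be essentially mechanical.
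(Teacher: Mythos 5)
Your proposal matches the paper's own proof exactly: Theorem~\ref{theorem: further cases} is deduced as an immediate corollary of Theorem~\ref{theorem: d=1 asymptotic} (which asymptotically settles the $d=1$ base case $s=2r$ of Conjecture~\ref{conjecture: main conjecture}) combined with [Theorem 3.11] of Day, Falgas-Ravry and Treglown to propagate to all $s>2r$ for $a$ sufficiently large. The caveat you flag about the hypothesis being supplied only asymptotically is sensible to check, but the paper treats the combination as automatic, as you anticipate.
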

\noindent In other words, Conjecture~\ref{conjecture: main conjecture} is asymptotically true for $d=1$ and $a$ sufficiently large.

Finally, it would be remiss not to mention here the results of F\"uredi and K\"undgen. These authors showed in~\cite{FurediKundgen02} that for all $q$ sufficiently large\footnote{Formally F\"uredi and K\"undgen proved the upper bound and a matching lower bound hold for $\mathbb{Z}$-weighted graphs, i.e.\   allowing negative edge multiplicities. However their lower bound construction only involve positive weights when $q$ is sufficiently large with respect to $s$.}
\[\mathrm{ex}_{\Sigma}(s,q)= \min\left\{m\in \mathbb{Q}: \ \sum_{i=1}^{s-1}\left\lfloor 1+m i \right\rfloor >q \right\}.\]
In the particular case $(s,q)=\left(2r, \Sigma_{r,1}(a, 2r)\right)=\left(2r, (a+1)\binom{2r}{2}-r-1\right)$, this gives that for all $a$ sufficiently large
\begin{align*}
\mathrm{ex}_{\Sigma}(2r,\Sigma_{2,1}(a,2r))=a+\frac{2r-3}{2r-1},
\end{align*}
with the same extremal constructions as the ones that attain $\Sigma_{r,1}(a,n)$ inside the family $\mathcal{T}_{r,1}(a,n)$. Thus for both sum-maximisation and production-maximisation of $(2r, \Sigma_{r,1}(a,2r))$-graphs one must look to the generalised Tur\'an multigraphs from Construction~\ref{construction: lower bound}, albeit with different weights assigned to the various parts in the canonical partition.

\subsection{Proof ideas and organisation of the paper}
The proof of Theorem~\ref{theorem: d=1 asymptotic} proceeds by induction on $r$, and by structural and optimisation arguments. The base case $r=2$ was proved in~\cite[Theorem 3.5]{DayFalgasRavryTreglown20+}, using somewhat different arguments. For the inductive step, a simple vertex-removal argument shows it is sufficient to establish that for all $n$ sufficiently large, every $G\in \mathcal{F}\left(n, 2r+2, \Sigma_{r+1,1}(a,2r+2)\right)$ must contain a vertex with low product-degree. Further, one can show that one can restrict one's attention to those multigraphs $G$ belonging to a certain subset of the family $\mathcal{F}\left(n, 2r+2, \Sigma_{r+1,1}(a,2r+2)\right)$ with helpful properties (see Proposition~\ref{prop: degree removal final form}).

 A key observation is then that in a product-extremal multigraph from $\mathcal{T}_{r+1,1}(a,n)$ there are two kinds of vertices: those from $V_i$, $i\in [r]$, sending edges of multiplicity $a+1$ to a set of vertices inducing a copy of a product-extremal multigraph from $\mathcal{T}_{r,1}\left(a,\left(\frac{r-1+x_{\star}(r+1,1)}{r}\right)n\right)$, and those from $V_0$, sending edges of multiplicity $a+1$ to a set of vertices inducing a copy of a product-extremal multigraph from $\mathcal{T}_{r,0}\left(a,\left(1-x_{\star}(r+1,1)\right)n\right)$.

Our arguments builds on a similar dichotomy: we pick a vertex $x$, and consider the set $Y$ of vertices sending edges of multiplicity $a+1$ to $x$.  If the product of edge multiplicities in $Y$ is not much larger than if $Y$ had the $(2r, \Sigma_{r,1}(a, 2r))$ property, then we use optimisation arguments to show that that $G$ contains a vertex with low product-degree (Lemma~\ref{lemma: if a+1 neighbourhoods are sparse, done}). On the other hand, if the product of edge multiplicities in $Y$ is larger than this, then we show by a different argument that we can find certain `good' $r$-partite structures in $Y$, which can then be used to show that $G$ contains a vertex with low product-degree (Lemma~\ref{lemma: if contain good Kr structure, then done}). The bulk of the work of the paper is showing these `good' $r$-partite structures can indeed be found. This requires some careful structural analysis and some weighted geometric averaging arguments that together form the paper's main technical innovations on previous work.

Our paper overcomes one of the two main challenges towards a proof of Conjecture~\ref{conjecture: main conjecture} (the other, harder one being the case $d\geq 2$ where the conjectured extremal examples feature a broader range of edge multiplicities). We suspect large parts of the proof framework we have developed in this paper may be helpful in tackling the general case of Conjecture~\ref{conjecture: main conjecture}, so we have taken some care to present it in a modular fashion, and in particular to prove general forms of our key lemmas.

The paper is structured as follows. In the next subsection, we gather some useful notation. In \textbf{Section~\ref{section: preliminaries, props of near extremal multigraphs}}, we show that in investigations of Conjecture~\ref{conjecture: main conjecture} we can restrict our attention to multigraphs $G$ with a much more favourable structure (Proposition~\ref{prop: degree removal final form}). In \textbf{Section~\ref{section: optimisation in neighbourhoods}} we prove some very general optimisation lemmas, in particular  Lemmas~\ref{lemma: if a+1 neighbourhoods are sparse, done} and~\ref{lemma: if contain good Kr structure, then done} alluded to above. Finally in \textbf{Section~\ref{section: finding r-partite}} we leave the general setting and specialise to the cases $(s,q)= \left(2r+2, \Sigma_{r+1,1}(2r+2)\right)$; we establish the existence of `good' $r$-partite structures (or low product-degree vertices) in this section, completing the proof of Theorem~\ref{theorem: d=1 asymptotic}. We end the paper in \textbf{Section~\ref{section: other questions and conjectures}} with some remarks on further questions, open problems and future directions for work on the Mubayi--Terry problem.

\subsection{Notation} Given a set $A$ and $t\in \mathbb{Z}_{\geq 0}$, we let $A^{(t)}$ denote the collection of all subsets of $A$ of size $t$. A multigraph is a pair $G=(V,w)$, where $V=V(G)$ is a set of vertices and $w=w_G$ is a function $w: \ V^{(2)}\rightarrow \mathbb{Z}_{\geq 0}$ assigning to each pair $\{a,b\}\in V^{(2)}$ a \emph{weight} or \emph{multiplicity} $w_G(\{a,b\})$. We usually write $ab$ for $\{a,b\}$ and, when the host multigraph $G$ is clear from context, we omit the subscript $G$ and write simply $w(ab)$ for $w_G(\{a,b\})$.

Given a multigraph $G$ and a set $X\subseteq V(G)$, we write $S(G[X])$ or, when the host multigraph $G$ is clear from context, $S(X)$ for the sum of the edge multiplicities of $G$ inside $X$, \[S(G[X]):=\sum_{v_1v_2 \in X^{(2)}} w(v_1v_2).\] 
Similarly, we write $P(G[X])$ or $P(X)$ for the product of the edges multiplicities of $G$ inside $X$, 
\[P(G[X]):=\prod_{v_1v_2\in X^{(2)}}w(v_1v_2).\] 
Further, given disjoint sets $X,Y\subset V(G)$ we write $S(G[X,Y])$  ($S(X,Y)$) and $P(G[X,Y])$ ($P(X,Y)$) for, respectively the sum  and the product of the edge multiplicities of $G$ over all edges $xy$ with $x\in X$ and $y\in Y$.

We define $d_G (v)$ (or simply $d(v)$) to be $\sum_{u\in V(G)} w_G(uv)$, and refer to this quantity as the \emph{degree}
of $v$ in $G$. Analogously, we use $p_G(v)$ to denote $\prod_{u\in V(G)} w_G(uv)$, and refer to this quantity as the \emph{product-degree} of $v$ in $G$. When $G$ is clear from context we write $p(v)$ for $p_G(v)$, and given $X\subseteq V(G)$ we also use $p_X(v)$ to denote $P(\{v\}, X)$, the product of the edge multiplicities of the edges sent by $v$ into $X$ in the multigraph $G$.

Finally, in our arguments we will will need to consider the subgraph of edges with a given multiplicity $m$ in a multigraph $G$. It is therefore convenient to introduce the following notation.
\begin{definition}
	Given a multigraph $G=(V,w)$, and $m\in \mathbb{Z}_{\geq 0}$, let $G^{(m)}$ denote the ordinary graph given by
	\[G^{(m)}= \Bigl(V,  \{e\in V(G)^{(2)}:\ w(e)=m  \}\Bigr).\]
	Given $v\in V$ and a subset $X\subseteq V$, we also let
	\[N^{(m)}_X(v):=\{x\in X: \ w(vx)=m\}.\]
	We refer to $G^{(m)}$ as the $m$-subgraph of $G$ and to $N^{(m)}_X(v)$ as the $m$-neighbourhood of $v$ in $X$. When $X=V$, we drop the subscript $X$ and simply write $N^{(m)}(v)$ for the $m$-neighbourhood of $v$ in $G$.
\end{definition}

\section{Preliminaries: properties of near-extremal multigraphs}\label{section: preliminaries, props of near extremal multigraphs}
Our problem involves interaction between sums and products. It is thus unsurprising that an integral version of the AM--GM inequality plays a part in our arguments.
\begin{proposition}[Integral AM--GM inequality]
	\label{prop: integral AM-GM}
	Let $a,n\in \mathbb{N}$, $t\in  \{0\}\cup[n]$, and let $w_1, \ldots, w_n$ be non-negative integers with $\sum_{i=1}^n w_i= an +t$. Then 
	$\prod_{i=1}^n w_i \leq a^{n-t}(a+1)^t$. 
\end{proposition}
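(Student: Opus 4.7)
The natural approach is a smoothing (or ``balancing'') argument: I expect the maximum of $\prod w_i$ subject to $\sum w_i = an+t$ to be attained when the $w_i$ are as equal as possible, i.e.\ when exactly $t$ of them equal $a+1$ and the remaining $n-t$ equal $a$. Since we are taking a max over a finite set, such a maximiser exists; my plan is to show any maximiser must have this balanced shape.

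First I would dispose of the degenerate case where some $w_i=0$: then $\prod_{i=1}^n w_i=0$, while the right-hand side $a^{n-t}(a+1)^t\geq 1$ (using $a\geq 1$), so the inequality is trivial. So assume from now on that every $w_i\geq 1$ at a maximiser. The key smoothing step is the elementary observation that if $w_i,w_j$ are positive integers with $w_j\geq w_i+2$, then
\[(w_i+1)(w_j-1)=w_iw_j+(w_j-w_i-1)\geq w_iw_j+1>w_iw_j,\]
so replacing $(w_i,w_j)$ by $(w_i+1,w_j-1)$ preserves the sum but strictly increases the product. Hence at any maximiser, all pairs of $w_i$'s differ by at most $1$, i.e.\ $\{w_i\}\subseteq\{m,m+1\}$ for some integer $m\geq 1$.

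Writing $t'$ for the number of indices with $w_i=m+1$ and $n-t'$ for the rest, the sum constraint becomes $mn+t'=an+t$ with $0\leq t'\leq n$ and $0\leq t\leq n$; this forces $m=a$ and $t'=t$ (as $(a,t)$ is the unique representation of $an+t$ in the form $mn+t'$ with $0\leq t'\leq n$ and $m\geq 0$, once $t\leq n$). The maximal product is therefore $a^{n-t}(a+1)^t$, as claimed.

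The argument is entirely routine; the only points that need mild care are the degenerate $w_i=0$ case and verifying the uniqueness of the representation $an+t$ to pin down $(m,t')=(a,t)$. No subtler obstacle arises.
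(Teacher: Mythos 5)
Your smoothing argument is correct, and the paper in fact states Proposition~\ref{prop: integral AM-GM} without proof, treating it as a standard elementary fact. So there is no proof in the paper to compare against; any correct elementary argument suffices, and the route you chose (show any maximiser of the product under the fixed-sum constraint has all coordinates within $1$ of each other, then read off the shape) is the canonical one.

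One small imprecision worth flagging: you assert that $(m,t')=(a,t)$ is \emph{the unique} solution of $mn+t'=an+t$ with $0\leq t'\leq n$, but this fails at the two boundary values of $t$. When $t=0$, both $(m,t')=(a,0)$ and $(m,t')=(a-1,n)$ satisfy the equation (and the constraint $t'\leq n$), and when $t=n$, both $(a,n)$ and $(a+1,0)$ do. This is harmless because $m^{\,n-t'}(m+1)^{t'}$ takes the same value on both representations (respectively $a^n$ and $(a+1)^n$), matching $a^{n-t}(a+1)^t$; but as stated the uniqueness claim is not literally true. The simplest clean fix is to impose $0\leq t'<n$ (replacing all-$(m+1)$ by all-$m$ with $m$ bumped up) so that division with remainder genuinely gives uniqueness, or simply to observe that in the ambiguous cases the two candidate products coincide. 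Either way, this is a cosmetic issue, not a genuine gap.
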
	
\noindent We shall also repeatedly use the following simple weighted geometric averaging bound.
\begin{proposition}[Weighted geometric averaging]\label{prop: geometric averaging}
Let $\alpha_1, \alpha_2, \ldots \alpha_m$ be non-negative real numbers summing to $1$, and let $p_1, p_2, \ldots, p_m$ be non-negative real numbers. Then there exists some $i\in[m]$ such that $p_i$ is at most the $(\alpha_j)_{j=1}^{m}$-weighted geometric mean of the quantities $(p_j)_{j=1}^m$:
\[p_i \leq \prod_{i=1}^m {(p_i)}^{\alpha_i}.\]	
\end{proposition}

\subsection{Behaviour of $\Sigma_{r,d}(a,s)$ and $\Pi_{r,d}(a,n)$}
It shall be useful in our proof of Theorem~\ref{theorem: d=1 asymptotic} to understand the size and structure of sum-maximising multigraphs from $\mathcal{T}_{r,d}(a,s)$. To this end, we shall use the following proposition from~\cite{DayFalgasRavryTreglown20+}:
\begin{proposition}[Proposition~5.3 from~\cite{DayFalgasRavryTreglown20+}]\label{prop: sum-extremal subgraphs, threshold for 2 vertices in extremal part}
	Let $r\in \mathbb{Z}_{\geq 2}$, $a\in \mathbb{N}$, $d\in \{0\}\cup[a-1]$. Let $s,j\in \mathbb{N}$. Then there exists $G\in \mathcal{T}_{r,d}(a,s)$ with $e(G)=\Sigma_{r,d}(a,s)$ whose canonical partition $\sqcup_{i=0}^{r-1}V_i$ satisfies $\vert V_0\vert =j$ if and only if one of the following hold: 
	\begin{enumerate}[(a)]
		\item $j>1$ and $(r-1)(d+1)(j-1)+(j-1)+r-1\leq s\leq (r-1)(d+1)j +j+r-1$;
		\item $j=1$ and $s\leq (r-1)(d+1)+r=(r-1)(d+2)+1$.
	\end{enumerate}
\end{proposition}
\noindent In particular, Proposition~\ref{prop: sum-extremal subgraphs, threshold for 2 vertices in extremal part} implies that for $s'<(r-1)(d+1)+2$ we have 
\begin{align}\label{eq: bound on Sigma diff}
\Sigma_{r,d}(a, s'+1)-\Sigma_{r,d}(a,s')=s'(a+1)-\Bigl\lfloor\frac{s'-1}{r-1}\Bigr\rfloor\geq s'(a+1)-d-1,
\end{align}
since there exist sum-maximising multigraphs in $\mathcal{T}_{r, d}(a, s'+1)$ and $\mathcal{T}_{r, d}(a, s')$ whose canonical partitions satisfy $\vert V_0\vert = 1$. Also, for $i\in [d]$, Proposition~\ref{prop: sum-extremal subgraphs, threshold for 2 vertices in extremal part} implies
\begin{align}\label{eq: bound on differnence of sigmas d, d-i}
\Sigma_{r,d}(a, (r-1)(d-i+2)+2)< \Sigma_{r,d-i}(a, (r-1)(d-i+2)+2).
\end{align}
Indeed, given $G'\in \mathcal{T}_{r,d}(a, (r-1)(d-i+2)+2)$ with $e(G')=\Sigma_{r,d}(a, (r-1)(d-i+2)+2)$, consider the graph $G\in \mathcal{T}_{r,d-i}(a, (r-1)(d-i+2)+2)$ obtained from $G'$ by replacing each edge with multiplicity $a-d$ by an edge with multiplicity $a-d+i$.

If the canonical partition of $G'$ satisfies $\vert V_0\vert >1$, then clearly $e(G')<e(G)\leq  \Sigma_{r,d-i}(a, (r-1)(d-i+2)+2)$. On the other hand if the canonical partition of $G'$ (which is also a canonical partition of $G$) satisfies $\vert V_0\vert =1$, then by Proposition~\ref{prop: sum-extremal subgraphs, threshold for 2 vertices in extremal part} part (b), the graph $G$ is not sum-extremal in $\mathcal{T}_{r,d-i}(a, (r-1)(d-i+2)+2)$, and thus  $e(G')=e(G)<\Sigma_{r,d-i}(a, (r-1)(d-i+2)+2)$. This establishes~\eqref{eq: bound on differnence of sigmas d, d-i}.

\begin{proposition}[Equation (3.3) in \cite{DayFalgasRavryTreglown20+}]\label{prop: value of xstar}
Let $r\in \mathbb{Z}_{\geq 2}$, $a\in \mathbb{N}$, $d\in \{0\}\cup[a-1]$, $G\in \mathcal{T}_{r,d}(a,n)$ with $P(G)=\Pi_{r,d}(a, n)$, and let $\sqcup_{i=0}^{r-1}V_i$ be the canonical partition of $G$. Then the following hold:
	\begin{enumerate}[(i)]
\item $\vert V_0\vert = x_{\star}(r,d)n+O(1)$;
\item $P(G)=\Pi_{r,d}(a,n)= a^{\binom{n}{2}}\left(\frac{a+1}{a}\right)^{\left(\frac{r-2+x_{\star}(r,d)}{r-1}\right)\binom{n}{2}+O(n)}$,
\end{enumerate}
where we recall that $x_{\star}(r,d)$ was defined in~\eqref{eq: def of xstar} and is given by
$x_{\star}(r,d):=  \frac{\log\left(\frac{a+1}{a}\right)}{\log\left(\frac{(a+1)^r}{a(a-d)^{r-1}}\right)}$.
\end{proposition}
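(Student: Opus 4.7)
The plan is to parameterise multigraphs $G\in \mathcal{T}_{r,d}(a,n)$ by the sizes $n_i:=|V_i|$ of the parts of their canonical partition, and apply a direct discrete optimisation. Setting $\alpha:=\log\frac{a+1}{a-d}$ and $\beta:=\log\frac{a+1}{a}$, counting intra- and inter-part edges gives the exact identity
\[
\log P(G) \;=\; \binom{n}{2}\log(a+1) \;-\; \binom{n_0}{2}\alpha \;-\; \sum_{i=1}^{r-1}\binom{n_i}{2}\beta.
\]
Maximising $P(G)$ over $\mathcal{T}_{r,d}(a,n)$ therefore reduces to minimising $\binom{n_0}{2}\alpha+\sum_{i=1}^{r-1}\binom{n_i}{2}\beta$ subject to $\sum_i n_i=n$ and $n_i\in\mathbb{Z}_{\geq 0}$.

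The key step is a local-exchange argument: starting from an extremal $G$, produce $G'\in \mathcal{T}_{r,d}(a,n)$ by moving a single vertex $v$ from one part to another and re-weighting so that the result lies in $\mathcal{T}_{r,d}(a,n)$, then exploit $P(G)\geq P(G')$. For $i,j\in[r-1]$ with $i\neq j$, moving $v$ from $V_i$ to $V_j$ changes $\log P$ by $(n_i-1-n_j)\beta$, so $|n_i-n_j|\leq 1$. The two possible exchanges between $V_0$ and some $V_j$, $j\in[r-1]$, yield respectively
\[
(n_0-1)\alpha\;\leq\;n_j\,\beta \qquad\textrm{and}\qquad (n_j-1)\beta\;\leq\;n_0\,\alpha,
\]
which pin down $n_0/n_j=\beta/\alpha+O(1/n)$. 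Combined with $n_j=(n-n_0)/(r-1)+O(1)$, this gives $n_0\bigl[(r-1)\alpha+\beta\bigr]=n\beta+O(1)$. Since $(r-1)\alpha+\beta=\log\frac{(a+1)^r}{a(a-d)^{r-1}}$, this is exactly $n_0=x_{\star}(r,d)\,n+O(1)$, establishing part~(i).

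For part~(ii), substitute $n_0=x_\star n+O(1)$ and $n_j=(1-x_\star)n/(r-1)+O(1)$ back into the identity above. The first-order relation $(r-1)x_\star\alpha=(1-x_\star)\beta$ (derived in part~(i)) simplifies the quadratic form that appears:
\[
x_\star^2\,\alpha \;+\; \frac{(1-x_\star)^2}{r-1}\,\beta \;=\; \frac{(1-x_\star)\,\beta}{r-1},
\]
so collecting terms and subtracting $\binom{n}{2}\log a$ yields
\[
\log P(G) - \binom{n}{2}\log a \;=\; \binom{n}{2}\,\log\tfrac{a+1}{a}\cdot \frac{r-2+x_\star(r,d)}{r-1} \;+\; O(n),
\]
which, after exponentiation, is precisely the statement of part~(ii).

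The argument is essentially a bookkeeping-and-optimisation calculation; the one subtlety to watch is that the pair of exchange inequalities really delivers $O(1)$, rather than merely $o(n)$, control on $n_0$ in part~(i). This is automatic since $\alpha$ and $\beta$ are positive constants independent of $n$, so the slack in each exchange inequality is bounded. Everything else is direct substitution.
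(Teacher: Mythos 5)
Your proof is correct. Note that the present paper does not prove this proposition: it is imported directly as Equation (3.3) from the cited reference, so there is no in-house argument to compare against. Your self-contained derivation --- writing $\log P(G)=\binom{n}{2}\log(a+1)-\binom{n_0}{2}\alpha-\sum_{i=1}^{r-1}\binom{n_i}{2}\beta$ with $\alpha=\log\frac{a+1}{a-d}$ and $\beta=\log\frac{a+1}{a}$, then running a discrete local-exchange argument at the optimum --- is the natural route and is sound. The exchange inequalities $(n_0-1)\alpha\leq n_j\beta$ and $(n_j-1)\beta\leq n_0\alpha$ do give $\vert n_0\alpha-n_j\beta\vert\leq\max(\alpha,\beta)=O(1)$, and together with $\vert n_i-n_j\vert\leq 1$ for $i,j\in[r-1]$ this pins down $n_0=x_\star n+O(1)$. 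One minor point worth flagging: the exchange moves require the source part to be non-empty, but the same pair of inequalities shows that if $n>r$ and some part were empty then a profitable move would exist, so all parts are non-empty for large $n$ and the argument goes through. The algebraic collapse of the quadratic form via the first-order relation $(r-1)x_\star\alpha=(1-x_\star)\beta$ is carried out correctly and yields the exponent $\frac{r-2+x_\star(r,d)}{r-1}$ as required.
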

\begin{remark} 
	The quantity $x_\star(r,d)$ satisfies the following recurrence relation: for all $r,d\geq 1$, 
\begin{align}\label{eq: recurrence for xstar}
x_{\star}(r+1,d) =\left(\frac{r-1+x_{\star}(r+1,d)}{r}\right)x_{\star}(r,d).
\end{align} 
This identity can be verified algebraically, and has a natural combinatorial interpretation. Consider a product-maximising multigraph $G$ from $\mathcal{T}_{r+1,d}(a,n)$ and let $\sqcup_{i=0}^{r}V_i$ be its canonical partition. Then the parts $\sqcup_{i=0}^{r-1}V_i$ induce an (almost) product maximising multigraph $G'$ from $\mathcal{T}_{r, d}(a, n-\vert V_r\vert)$. In particular the special part $V_0$ satisfies both $\vert V_0\vert =x_{\star}(r+1, d)n +o(n)$ and $\vert V_0\vert = x_{\star}(r, d)(n -\vert V_r\vert)(1+o(1))= x_{\star}(r,d)\left(\frac{r-1+x_{\star}(r+1,d)}{r}\right)n+o(n)$.

\noindent Note that in the special case $r=1$, equality~\eqref{eq: recurrence for xstar} is true but vacuous: $x_{\star}(1,d)=1$ for all $d$, and~\eqref{eq: recurrence for xstar} is the tautological fact that $x_{\star}(2,d)=x_{\star}(2,d)$.
\end{remark}

\subsection{Properties of near-extremal multigraphs}
Fix positive integers $a,r,d$ with $r\geq 3$, $d\geq 1$, $a\geq d+1$. Set $s= (r-1)(d+1)+2$. Let $x_{\star}(r,d)$ be as in Proposition~\ref{prop: value of xstar}. As we show in the elementary proposition below, one can essentially reduce the problem of showing $\mathrm{ex}_{\Pi}(n,s,q)= \left(\Pi_{r,d}(a,n)\right)^{1+o(1)}$ to the problem of showing all $(s,q)$-graphs contain vertices with low product-degree.
\begin{proposition}\label{prop: degree removal}
	If for all $n\in \mathbb{N}$ we have 
	\begin{align}\label{eq: degree removal inequality}
	\mathrm{ex}_{\Pi}\left(n+1, s, \Sigma_{r,d}(a,s)\right)\leq \mathrm{ex}_{\Pi}\left(n, s, \Sigma_{r,d}(a,s)\right) a^{n} \left(\frac{a+1}{a}\right)^{\left(\frac{r-2+x_{\star}(r,d)}{r-1}\right)n +o(n)},  \end{align}
	then
	\[\textrm{ex}_{\Pi}(n,s,\Sigma_{r,d}(a,s))= \left(\Pi_{r,d}(a,n)\right)^{1+o(1)}.\]
\end{proposition}
\begin{proof}
Applying our hypothesis $n-1$ times, we have
\begin{align*}
\mathrm{ex}_{\Pi}\left(n, s, \Sigma_{r,d}(a,s)\right)\leq \prod_{i=1}^{n-1}a^{i}\left(\frac{a+1}{a}\right)^{\left(\frac{r-2+x_{\star}(r,d)}{r-1}\right)i + o(i)}=  a^{\binom{n}{2}}\left(\frac{a+1}{a}\right)^{\left(\frac{r-2+x_{\star}(r,d)}{r-1}\right)\binom{n}{2}+o(n^2)},
\end{align*}
and the claim then follows from Proposition~\ref{prop: value of xstar} part (ii) together with our observation in~\eqref{eq: trivial lower bound on expi} that $\Pi_{r,d}(a,n)$ is a lower bound for $\mathrm{ex}_{\Pi}\left(n, s, \Sigma_{r,d}(a,s)\right)$.
\end{proof}
\noindent Our goal in this subsection is to show that  we may in fact restrict our attention to the problem of showing that all multigraphs within a certain `nice' subset of $\mathcal{F}(n,s,\Sigma_{r,d}(a,s))$ contain vertices with low product-degrees.

\begin{definition}
	Let $\mathcal{G}\left(n,s, \Sigma_{r,d}(a,s)\right)$ be the set of multigraphs on $[n]$ that have the $(s', \Sigma_{r,d}(a,s'))$-property for all integers $s'$: $2\leq s'\leq s$.
\end{definition}
\begin{remark}
	Observe that multigraphs in $\mathcal{G}\left(n,s, \Sigma_{r,d}(a,s)\right)$ have bounded multiplicity: they have the $(2, \Sigma_{r,d}(a,2))=(2, a+1)$-property, meaning that in such multigraphs all edges have multiplicity at most $a+1$. We shall make heavy use of this fact in our proof of Theorem~\ref{theorem: d=1 asymptotic}.
\end{remark}
\noindent As a corollary of (the proof of)  a result of Day, Falgas-Ravry and Treglown~\cite{DayFalgasRavryTreglown20+}, every multigraph $G\in\mathcal{F}(n,s,\Sigma_{r,d}(a,s))$ either belongs to the more restricted subfamily $\mathcal{G}\left(n,s, \Sigma_{r,d}(a,s)\right)$ or contains a vertex with low product-degree.
\begin{proposition}[Corollary of {\cite[Theorem 6.1]{DayFalgasRavryTreglown20+}}]\label{prop: good multigraphs}
Let $G \in \mathcal{F}(n,s,\Sigma_{r,d}(a,s))$. Then either there exists $v\in V(G)$ with  
\[p_G(v)\leq a^{n} \left(\frac{a+1}{a}\right)^{\left(\frac{r-2}{r-1}\right)n +O(1)},\]
	or $G\in \mathcal{G}(n,s, \Sigma_{r,d}(a,s))$	
	\end{proposition}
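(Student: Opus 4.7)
Suppose $G \in \F(n,s,\Sigma_{r,d}(a,s)) \setminus \mathcal{G}(n,s,\Sigma_{r,d}(a,s))$. By definition of $\mathcal{G}$, there is some $s' \in \{2,\ldots,s-1\}$ and an $s'$-subset $X \subseteq V(G)$ for which $S(G[X]) \geq \Sigma_{r,d}(a,s') + 1$. I take $s'$ minimal with this property, so that every proper subset of $X$ of size at least $2$ still satisfies the expected $(s'',\Sigma_{r,d}(a,s''))$-sum bound; this supplies $G[X]$ with enough rigidity to extract a low-product-degree vertex. The target is to produce $v \in V(G)$ with $p_G(v) \leq a^n(\tfrac{a+1}{a})^{(r-2)/(r-1)\cdot n + o(n)}$.

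The plan is to apply Theorem~6.1 of~\cite{DayFalgasRavryTreglown20+} directly: that result is set up for exactly this ``local sum-excess'' situation. The core mechanism is that the $(s,\Sigma_{r,d}(a,s))$-property of $G$ combined with the surplus inside $X$ forces, for every $(s-s')$-set $T \subseteq V(G)\setminus X$, the deficit inequality
\[S(G[T]) + S(G[X,T]) \leq \Sigma_{r,d}(a,s)-\Sigma_{r,d}(a,s')-1,\]
which is strictly smaller than one would otherwise expect. Averaging this inequality over all choices of $T$ bounds both $P(G[V(G)\setminus X])$ and $P(G[X,V(G)\setminus X])$ via the integral AM--GM inequality (Proposition~\ref{prop: integral AM-GM}), and pigeonholing within $X$ then identifies a vertex of low product-degree.

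The technical subtlety --- handled inside~\cite[Theorem~6.1]{DayFalgasRavryTreglown20+} and treated here as a black box --- is producing the exponent $(r-2)/(r-1)$ rather than the coarser $(r-2+x_{\star}(r,d))/(r-1)$ appearing in Proposition~\ref{prop: degree removal}. The sharper exponent, with $x_{\star}(r,d)$ removed, is precisely the ``dividend'' one collects for falling outside $\mathcal{G}$: the transcendental weighting $x_{\star}(r,d)$ arises from the freedom to optimise within $\mathcal{T}_{r,d}(a,n)$, and that freedom disappears as soon as $G$ fails an intermediate $(s',\Sigma_{r,d}(a,s'))$-property. The main obstacle is therefore the careful bookkeeping needed to locate exactly where the $x_{\star}$-gain is lost; this is the content of Theorem~6.1, and the remaining task in our setting is to verify that its hypotheses apply and that its conclusion matches the bound stated --- essentially an exponent-matching computation rather than a new argument.
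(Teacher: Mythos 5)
The paper itself does not give a proof of this proposition: it is stated purely as a corollary of~\cite[Theorem~6.1]{DayFalgasRavryTreglown20+} and the invocation is left as a citation. Your proposal ultimately does the same thing --- after converting failure of $\mathcal{G}$-membership into the existence of an $s'<s$ and a witnessing $s'$-set $X$ with sum surplus, and noting the (correct, but elementary) deficit inequality $S(G[T])+S(G[X,T])\leq \Sigma_{r,d}(a,s)-\Sigma_{r,d}(a,s')-1$ for $T\in (V\setminus X)^{(s-s')}$, you explicitly hand the production of the $(r-2)/(r-1)$ exponent back to Theorem~6.1 ``as a black box.'' So the proposal and the paper take essentially the same route; the additional mechanism you sketch is plausible color but is not load-bearing in your argument, since the exponent-matching and pigeonhole steps --- the only parts carrying real content --- are deferred to the cited theorem.
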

\noindent Generalising ideas from~\cite{DayFalgasRavryTreglown20+, MubayiTerry20}, we consider an even nicer subfamily of $\mathcal{G}(n,s,\Sigma_{r,d}(a,s))$. 
\begin{definition}
	Two vertices $u$ and $v$ in a multigraph $G$ are \emph{clones} if for every $z\in V(G)\setminus\{x,y\}$ we have $w_G(xz)=w_G(yz)$. 
\end{definition}
\begin{definition}
	Let $\mathcal{H}(n, s, \Sigma_{r,d}(a,s))$ denote the set of multigraphs $G$ from $\mathcal{G}(n,s,\Sigma_{r,d}(a,s))$ such that:
	\begin{enumerate}[(i)]
		\item every edge in $G$ has weight at least $a-d$;
		\item if $w_G(uv)=a-d$, then $u$ and $v$ are clones in $G$;
		\end{enumerate}
\end{definition}
\begin{remark}
Property (ii) above implies that the subgraph $G^{(a-d)}$ of $G$ consisting of edges of multiplicity $a-d$ is a disjoint union of cliques.
\end{remark}
\begin{proposition}\label{prop: reducing to good graphs with clique decompositions}
	Let $G\in\mathcal{G}(n, s, \Sigma_{r,d}(a,s))$. Then there exists $G'\in\mathcal{H}(n, s, \Sigma_{r,d}(a,s))$ such that $P(G)\leq P(G')$.
\end{proposition}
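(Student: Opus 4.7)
The plan is to prove the proposition via a Zykov-style symmetrization / cloning argument. Among all $G \in \mathcal{G}(n,s,\Sigma_{r,d}(a,s))$ with product at least that of our initial multigraph, I would pick one maximizing $P(G)$ and, subject to that, maximizing a secondary potential (for instance the number of pairs of clones), and show that any such $G$ already lies in $\mathcal{H}$. The contradiction argument for both (i) and (ii) uses the same basic local modification: given an offending edge $uv$ (i.e., one with $w_G(uv)<a-d$, or with $w_G(uv)=a-d$ but $u,v$ non-clones), assume without loss of generality $p_G(u)\geq p_G(v)$, delete $v$, and introduce a new vertex $u'$ with $w_{G'}(u'z):=w_G(uz)$ for every $z\neq u,v$ and $w_{G'}(uu')\in\{w_G(uv),a-d\}$ chosen to maximize the product. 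A direct computation gives $P(G')/P(G)=\bigl(p_G(u)/p_G(v)\bigr)\cdot\bigl(w_{G'}(uu')/w_G(uv)\bigr)\geq 1$, strict in case (i) (taking $w_{G'}(uu')=a-d>w_G(uv)$), and weakly so in case (ii) while strictly increasing the secondary potential.

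The main obstacle is then verifying that $G'\in\mathcal{G}$, i.e.\ that $G'$ has the $(s',\Sigma_{r,d}(a,s'))$-property for every $2\leq s'\leq s$. Any $s'$-set $X\subseteq V(G')$ avoiding $u'$, or containing exactly one of $\{u,u'\}$, reduces via the clone identification (sending $u'\mapsto u$) to an $s'$-set of $G$ with the same edge sum, so the bound $S_{G'}(X)\leq\Sigma_{r,d}(a,s')$ is immediate. The only nontrivial case is $\{u,u'\}\subseteq X$; writing $T:=X\setminus\{u,u'\}$ (so $|T|=s'-2$), one has
\[
S_{G'}(X)=w_{G'}(uu')+2S_G(\{u\},T)+S_G(T),
\]
whereas the $s'$-set $X'=T\cup\{u,v\}$ of $G$ satisfies $S_G(X')=w_G(uv)+S_G(\{u\},T)+S_G(\{v\},T)+S_G(T)\leq\Sigma_{r,d}(a,s')$. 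Subtracting yields
\[
S_{G'}(X)-S_G(X')=\bigl(w_{G'}(uu')-w_G(uv)\bigr)+\bigl(S_G(\{u\},T)-S_G(\{v\},T)\bigr).
\]
I would control the first term by the hypothesis $w_G(uv)\leq a-d$ (so the jump $w_{G'}(uu')-w_G(uv)\leq d+1$), and the second by combining the $(s'-1,\Sigma_{r,d}(a,s'-1))$-property of $G$ applied to $T\cup\{u\}$ and $T\cup\{v\}$ with the sharp estimate~\eqref{eq: bound on Sigma diff} on $\Sigma_{r,d}(a,s')-\Sigma_{r,d}(a,s'-1)$ from Proposition~\ref{prop: sum-extremal subgraphs, threshold for 2 vertices in extremal part}. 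The hard part is checking that the two contributions together do not exceed $\Sigma_{r,d}(a,s')-S_G(X')$.

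If this verification fails for the cloning direction $u\to v$, the argument is salvaged by a symmetric counterpart: one considers the opposite direction $G^v$ (replacing $u$ by a clone of $v$), and notes that summing $S_{G^u}(X)+S_{G^v}(X)$ over all violating $X$ leads, after averaging, to a contradiction with either the $(s',\Sigma_{r,d}(a,s'))$-property of $G$ on $X'=T\cup\{u,v\}$ or the hypothesis $p_G(u)\geq p_G(v)$. An alternative, more refined operation -- ``partial cloning'', in which only some neighbours $z$ have $w_{G'}(u'z)$ reset to $w_G(uz)$ while the others keep $w_G(vz)$ -- provides enough flexibility to always produce a valid $G'$ while weakly increasing the product. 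I expect this case analysis to be the main technical burden; once it is established, the proposition follows by iterating the modification, with the twin potential (product, number of clone pairs) guaranteeing termination at a member of $\mathcal{H}(n,s,\Sigma_{r,d}(a,s))$.
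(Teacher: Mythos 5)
Your overall strategy (iterative local modifications that raise low-multiplicity edges and clone endpoints of $(a-d)$-edges) is the same as the paper's, as is the elementary product calculation $P(G')/P(G)=(p_G(u)/p_G(v))\cdot(w_{G'}(uu')/w_G(uv))$. However, the step you yourself flag as ``the hard part'' --- verifying that $G'$ still satisfies the $(s',\Sigma_{r,d}(a,s'))$-property for the $s'$-sets containing both $u$ and its clone $u'$ --- is exactly where your argument breaks, and the workarounds you sketch do not repair it.

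Concretely, comparing $S_{G'}(X)$ with $S_G(X')$ for $X'=T\cup\{u,v\}$ leaves you with the uncontrolled error $S_G(\{u\},T)-S_G(\{v\},T)$, which in the cloning case ($w_G(uv)=a-d$) must be shown to be at most $\Sigma_{r,d}(a,s')-S_G(X')$; there is no reason for this to hold, as you can make $u$ heavy and $v$ light on $T$ while keeping $S_G(X')$ close to its maximum. Averaging the two cloning directions $G^u$ and $G^v$ only shows that \emph{for each fixed} $T$ one of the two directions is safe, not that a single consistent choice works uniformly across all $s'$-sets, and the ``partial cloning'' escape hatch would destroy the clone property you are trying to create, so it does not terminate at a member of $\mathcal{H}$. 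The paper sidesteps the difficulty entirely by \emph{not} comparing to $X'=T\cup\{u,v\}$: it decomposes $S_{G'}(X)$ intrinsically as
\[
S_{G'}\bigl(T\cup\{u,u'\}\bigr)=S\bigl(G[T\cup\{u\}]\bigr)+S\bigl(G[T,\{u\}]\bigr)+w_{G'}(uu')\le\Sigma_{r,d}(a,s'-1)+(s'-2)(a+1)+(a-d),
\]
where the first summand is bounded by the $(s'-1)$-property of $G$, the second by the trivial $(2,a+1)$-property, and the third is exactly $a-d$; the conclusion $\le\Sigma_{r,d}(a,s')$ then follows from the gap estimate~\eqref{eq: bound on Sigma diff} (which needs $s'\le s=(r-1)(d+1)+2$). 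This is the key idea your proposal is missing, and without it (or an equally effective substitute) the proof remains incomplete.
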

\begin{proof}
We modify $G$ in two phases, the first to raise the minimum edge multiplicity to $a-d$, and the second to ensure that all vertices joined by an edge of multiplicity $a-d$ are clones of each other. 

\textbf{First phase:} suppose $G$ contains an edge $uv$ of multiplicity $w_G(uv)<a-d$. We define a new multigraph $G_1$ from $G$ by increasing the multiplicity of $uv$ to $a-d$ and keeping all other edge multiplicities unchanged. Clearly $P(G_1)>P(G)$. We claim that in addition, like $G$, $G_1$ belongs to $\mathcal{G}(n,s, \Sigma_{r,d}(a,s))$.

Indeed, we clearly have $G_1 \in\mathcal{F}(n,2, a+1)$. Suppose $G_1\in \mathcal{F}(n, s', \Sigma_{r,d}(a,s'))$ for some $s'$: $2\leq s'<s$. Consider an $(s'-1)$-set $X\subseteq V\setminus\{u,v\}$. Then
\begin{align*}
S(G_1[X\cup\{u,v\}])&= S(G[X\cup\{u\}])+S(G[X, \{v\}])+ w_{G_1}(uv)\\
&\leq  \Sigma_{r,d}(a, s')+ (s'-1)(a+1)+ a-d=\Sigma_{r,d}(a,s')+s'(a+1)-(d+1),	
\end{align*}
which is less than $\Sigma_{r,d}(a, s'+1)$ by~\eqref{eq: bound on Sigma diff}. This immediately implies that $G_1\in \mathcal{F}(n, s'+1, \Sigma_{r,d}(a, s'+1))$ as well. Thus $G_1 \in \bigcap_{s'=2}^s \mathcal{F}(n, s', \Sigma_{r,d}(a,s'))= \mathcal{G}(n, s, \Sigma_{r,d}(a,s))$ as required.

Sequentially increasing the edge multiplicities of edges with $w_G(uv)<a-d$ in this way, we have that after at most $\binom{n}{2}$ steps we have produced a multigraph $G_2\in \mathcal{G}(n,s, \Sigma_{r,d}(a,s))$ with $P(G_2)\geq P(G)$ in which all edge multiplicities are at least $a-d$.

\textbf{Second phase:} we shall go through the multigraph $G_2$ in several passes. While there exist edges $uv$ in $G_2$ such that $w_{G_2}(uv)=a-d$ and $u,v$ are not clones of each other in $G_2$, we run the following algorithm:
\begin{enumerate}[(1)]
\item among all vertices of $G_2$ incident with such edges, we select one with maximum product-degree in $G_2$, and denote it by $u$;

\item we set $B_u$ to be the collection of vertices in $V(G_2)$ that are joined to $u$ by an edge of multiplicity $a-d$ and are not clones of $u$. While $B_u$ is non-empty, we pick a vertex $v\in B_u$ and modify $G_2$ by changing the multiplicity of $vw$ to $w_{G_2}(uw)$ for all $w\in V(G_2)\setminus\{u,v\}$ -- in other words, we replace $v$ by a clone of $u$. 
\end{enumerate}
Observe that each time we select $v\in B_u$ and replace it by a clone of $u$ in an iteration of Step (2) of our algorithm, the value of $p_{G_2}(v')$ does not increase for any $v'\in N^{(a-d)}(u)$. Indeed, the multiplicity of $vv'$ is changed to $a-d\leq w_{G_2}(vv')$. It follows in particular that after our change we still have $p_{G_2}(u)\geq p_{G_2}(v')$ for all $v'\in N^{(a-d)}(u)$. This ensures that our procedure does not decrease the value of $P(G_2)$ (since our modification of the graph changed this product by a multiplicative factor of $p_{G_2}(u)/p_{G_2}(v)\geq 1$).

Further, each time we replace some $v\in B_u$ by a clone of $u$ in an iteration of Step (2), it is easy to check that after our modification, the multigraph $G_2$ still lies in $\mathcal{G}(n,s, \Sigma_{r,d}(a,s))$. Indeed, this can be shown in exactly the same way that we proved $G_1\in \mathcal{G}(n,s, \Sigma_{r,d}(a,s))$ in the first phase.  Also if $v'\in N^{(a-d)}(u)\setminus B_u$, then by definition the multiplicities of $uv$ and $vv'$ were the same, so $v'$ remains a clone of $u$. In particular after at most $\vert B_u\vert<n$ iterations of this procedure, $B_u$ becomes empty. When this occurs, we have that all vertices in $N^{(a-d)}(u)$ are clones of $u$ (and of each other), and in particular $C_u:=\{u\}\cup N^{(a-d)}(u)$ forms an isolated clique in ${G_2}^{(a-d)}$ (i.e.\ all edges from $C_u$ to $V(G_2)\setminus C_u$ have multiplicity strictly greater than $a-d$ while all edges in $C_u$ have multiplicity equal to $a-d$)  and $w_{G_2}(uw)=w_{G_2}(u'w)$ for all $u,u'\in C_u$, $w\notin C_u$. Both of these properties are maintained in all subsequent iterations of Steps (1)--(2), from which it follows that no vertex of $C_u$ will ever again be selected in an iteration of Step (1). Thus our algorithm will terminate after at most $n/2$ iterations.

 The final multigraph $G'$ obtained when our algorithm terminates then has all the claimed properties: $P(G')\geq P(G_2)\geq P(G)$, $G'\in \mathcal{G}(n,s, \Sigma_{r,d}(a,s))$, and whenever $w_{G'}(uv)=a-d$, $u$ and $v$ are clones in $G'$.
\end{proof}
\noindent We now combine Propositions~\ref{prop: good multigraphs} and~\ref{prop: reducing to good graphs with clique decompositions} with Proposition~\ref{prop: degree removal} to show that to prove Theorem~\ref{theorem: d=1 asymptotic} it will be enough to restrict our attention to multigraphs from the `nice' family $\mathcal{H}(n,s, \Sigma_{r,d}(a,s))$ rather than the whole of $\mathcal{F}(n,s, \Sigma_{r,d}(a,s))$, and to show that these multigraphs contain low product-degree vertices.
\begin{proposition}\label{prop: degree removal final form}
	If for all $n\in \mathbb{N}$ and every $G \in \mathcal{H}(n+1,s,\Sigma_{r,d}(a,s))$ there exists 	$v\in V(G)$ with 
\[p_G(v)\leq a^{n} \left(\frac{a+1}{a}\right)^{\left(\frac{r-2+x_{\star}(r,d)}{r-1}\right)n +o(n)},\]
then
\[\textrm{ex}_{\Pi}(n,s,\Sigma_{r,d}(a,s))= \left(\Pi_{r,d}(a,n)\right)^{1+o(1)}.\]	
\end{proposition}
\begin{proof}
By Proposition~\ref{prop: degree removal}, it is enough to show that~\eqref{eq: degree removal inequality} holds for all $n$. Consider $G\in \mathcal{F}(n+1,s, \Sigma_{r,d}(a,s))$ with $P(G)=\textrm{ex}_{\Pi}(n+1,s,\Sigma_{r,d}(a,s))$.

Suppose $G\notin \mathcal{G}(n+1, s, \Sigma_{r,d}(a,s))$. Then by Proposition~\ref{prop: good multigraphs}, there is a vertex $v$ in $G$ with $p_G(v)\leq a^n\left(\frac{a+1}{a}\right)^{\frac{r-2}{r-1}n + O(1)}$. By removing $v$ from $G$ to obtain the multigraph $G-v\in \mathcal{F}(n,s, \Sigma_{r,d}(a,s))$, we see that
\begin{align*}
\textrm{ex}_{\Pi}(n+1,s,\Sigma_{r,d}(a,s)) =P(G)=P(G-v)p_{G}(v)\leq \mathrm{ex}_{\Pi}\left(n, s, \Sigma_{r,d}(a,s)\right) a^{n} \left(\frac{a+1}{a}\right)^{\frac{r-2}{r-1}n + O(1)},
\end{align*}
and~\eqref{eq: degree removal inequality} is satisfied.

On the other hand, suppose that $G\in \mathcal{G}(n+1, s, \Sigma_{r,d}(a,s))$. Then by Proposition~\ref{prop: reducing to good graphs with clique decompositions} there exists $G' \in 
\mathcal{H}(n+1, s, \Sigma_{r,d}(a,s))$ with $P(G)\leq P(G')$. Let $v$ be a vertex with minimum product-degree in $G'$. Removing $v$ from $G'$ to obtain the multigraph $G'-v\in \mathcal{H}(n,s, \Sigma_{r,d}(a,s))$ and using our assumption to bound $p_{G'}(v)$, we have
\begin{align*}
\textrm{ex}_{\Pi}(n+1,s,\Sigma_{r,d}(a,s)) \leq P(G')=P(G'-v)p_{G'}(v)\leq \mathrm{ex}_{\Pi}(n, s, \Sigma_{r,d}(a,s)) a^{n} \left(\frac{a+1}{a}\right)^{\left(\frac{r-2+x_{\star}(r,d)}{r-1}\right)n +o(n)}
\end{align*}
and see again that~\eqref{eq: degree removal inequality} is satisfied. The result follows.
\end{proof}

\section{Optimisation in neighbourhoods}\label{section: optimisation in neighbourhoods}
Throughout this section, let $r,a,d $ be positive integers with $r\geq 2$, $a>d\geq 1$. Set $s= r(d+1)+2$. Let $G\in \mathcal{H}(n,s, \Sigma_{r+1,d}(a,s))$. Let $x\in V(G)$. Recall that $N^{(m)}(x)$ is the collection of vertices sending an edge of multiplicity $m$ to $x$ in $G$. Set $X:=N^{(a-d)}(x)\cup\{x\}$,  $Y:=N^{(a+1)}(x)$ and $Z:=V(G)\setminus\left(X\cup Y\right)$. Let $\vert X\vert=\alpha n$, $\vert Y\vert=\beta n$. We begin by proving a general lemma which shows that if $G[Y]$ has a vertex with low product-degree, then so does $G$. 
\begin{lemma}\label{lemma: if a+1 neighbourhoods are sparse, done}
 If there exists $y\in Y$ such that
\[p_{Y}(y) \leq a^{\beta n}\left(\frac{a+1}{a}\right)^{ \left(\frac{r-2+x_{\star}(r, d)}{r-1}\right)\beta n+o(n)},\]
then $G$ contains a vertex $v$ with
\[p(v)\leq a^n \left(\frac{a+1}{a}\right)^{\left(\frac{r-1+x_{\star}(r+1,d)}{r}\right)n +o(n)}.\]
\end{lemma}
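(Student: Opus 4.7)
Set $\alpha:=|X|/n$, $\beta:=|Y|/n$, and write
\[T:=\frac{r-1+x_{\star}(r+1,d)}{r},\qquad c:=\frac{r-2+x_{\star}(r,d)}{r-1}\]
for the target and hypothesis exponents respectively, so that the conclusion asks for $v$ with $p(v)\le a^{n}\left((a+1)/a\right)^{Tn+o(n)}$. The approach is a short dichotomy on the value of $\beta$: either the vertex $y$ provided by the hypothesis already witnesses the conclusion, or $x$ does, by virtue of the rigid structure at $x$ guaranteed by $G\in\mathcal{H}(n,s,\Sigma_{r+1,d}(a,s))$.

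First I would upper bound $p(y)=p_{X}(y)\cdot p_{Y}(y)\cdot p_{Z}(y)$. Every edge of $G$ has weight at most $a+1$, so the first and third factors together contribute at most $(a+1)^{(1-\beta)n}$. Combined with the hypothesis this gives
\[p(y)\le (a+1)^{(1-\beta)n}\cdot a^{\beta n}\left(\tfrac{a+1}{a}\right)^{c\beta n+o(n)}=a^{n}\left(\tfrac{a+1}{a}\right)^{(1-\beta(1-c))n+o(n)},\]
which meets the target whenever $\beta\ge(1-T)/(1-c)$. Next I would upper bound $p(x)$: edges from $x$ to $X\setminus\{x\}$ have weight exactly $a-d$ by definition of $X$, edges from $x$ to $Y$ have weight exactly $a+1$ by definition of $Y$, and edges from $x$ to $Z$ have weight in $\{a-d+1,\ldots,a\}$ (the lower bound from property (i) of $\mathcal{H}$, the upper bound from the fact that $Z\cap Y=\emptyset$). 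Multiplying,
\[p(x)\le (a-d)^{\alpha n-1}\,a^{(1-\alpha-\beta)n}\,(a+1)^{\beta n}\le a^{n}\left(\tfrac{a+1}{a}\right)^{\beta n+o(n)},\]
where the last inequality uses $(a-d)/a\le 1$ to drop the $((a-d)/a)^{\alpha n}$ factor. Thus $x$ meets the target whenever $\beta\le T$.

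The dichotomy therefore closes provided $(1-T)/(1-c)=T$, which is the one point I expect to need a calculation and is essentially the only obstacle in the proof. Writing $y_{\star}:=x_{\star}(r,d)$ and $z_{\star}:=x_{\star}(r+1,d)$, the recurrence~\eqref{eq: recurrence for xstar} rearranges to $z_{\star}=y_{\star}(r-1)/(r-y_{\star})$, whence $T=(r-1+z_{\star})/r=(r-1)/(r-y_{\star})$; a direct substitution then simplifies $(1-T)/(1-c)=(r-1)(1-z_{\star})/(r(1-y_{\star}))$ to $(r-1)/(r-y_{\star})$ as well, matching $T$. With that identity in hand, at least one of $\{x,y\}$ is the vertex demanded by the conclusion. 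The value of the lemma lies in its hypothesis shifting the burden of proof onto the sparsity of the $(a+1)$-neighbourhood $Y$, not in intricate combinatorics within its own proof; I would accordingly keep the write-up short and centered on verifying this clean numerical identity.
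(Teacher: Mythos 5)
Your proposal is correct and is essentially the paper's own argument: both bound $p(x)$ by $(a-d)^{|X|-1}(a+1)^{|Y|}a^{|Z|}$, bound $p(y)$ by $(a+1)^{n-|Y|}p_Y(y)$, and then show $\min\{p(x),p(y)\}$ meets the target for every $\beta$ via the recurrence~\eqref{eq: recurrence for xstar}. The only cosmetic difference is that you drop the $\alpha$-dependence up front (using $(a-d)/a\le 1$) and phrase the conclusion as a dichotomy in $\beta$, whereas the paper carries $\alpha$ through a joint $(\alpha,\beta)$-optimisation and observes that $\alpha=0$ is extremal; the numerical identity $(1-T)/(1-c)=T$ you verify is exactly the paper's rearrangement of~\eqref{eq: recurrence for xstar}.
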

\begin{proof}
We have
\begin{align}
p(x)&\leq (a-d)^{\vert X\vert -1}(a+1)^{\vert Y\vert}a^{n-\vert X\vert - \vert Y\vert}
= a^n\left(\frac{a+1}{a}\right)^{\left(-\alpha \frac{\log(a/(a-d))}{\log((a+1)/a) }+ \beta   \right)n +O(1)} \label{eq: first optim lemma, bound on px}
\end{align}
and, by our assumption on $y$, 
\begin{align}
p(y)\leq (a+1)^{n-\vert Y\vert} p_Y(y)
\leq a^n \left(\frac{a+1}{a}\right)^{\left(1-\left(\frac{1-x_{\star}(r, d)}{r-1}\right)\beta \right)n +o(n)}\label{eq: first optim lemma, bound on py}.
\end{align}
Now the maximum over all $\alpha, \beta\geq 0$  satisfying $\alpha+\beta\leq 1$ of the function
\[\min\Bigl\{-\alpha \frac{\log \left(\frac{a}{a-d}\right)}{\log\left(\frac{a+1}{a}\right) }+ \beta,  1-\left(\frac{1-x_{\star}(r, d)}{r-1}\right)\beta \Bigr\}\]
is attained at $\alpha=0$ and $\beta$ satisfying
\[\beta\left(1+ \frac{(1-x_{\star}(r, d))}{r-1} \right)=1.\]
Rearranging terms we see the maximum is precisely equal to
\[\frac{1}{1+ \frac{(1-x_{\star}(r, d))}{r-1}}= \frac{r-1}{r -x_{\star}(r,d)},\]
which by~\eqref{eq: recurrence for xstar} and rearranging terms again is equal to $(r-1+x_{\star}(r+1,d))/r$. Combining the result of this optimisation with the bounds~\eqref{eq: first optim lemma, bound on px} and~\eqref{eq: first optim lemma, bound on py} on $p(x)$ and $p(y)$, we get that
\[\min\{p(x),p(y) \}\leq   a^n \left(\frac{a+1}{a}\right)^{\left(\frac{r-1+x_{\star}(r+1,d)}{r}\right)n +o(n)}, \]
thereby proving the lemma.
\end{proof}

As indicated in the introduction, a key part of our proof strategy will be to consider certain `good' $r$-partite structures inside $G[Y]$, which we define below.
\begin{definition}\label{def: good subgraph}
Let $H=(V_H, E_H)$ be an ordinary graph. A \emph{good} copy of $H$ in $G$ is a set $X\subseteq V(G)$ of $\vert V_H\vert$ vertices such that (i) all edges in $X^{(2)}$ have multiplicity at least $a$ in $G$, and (ii) the edges in $X^{(2)}$ with multiplicity $a+1$ form a graph isomorphic to $H$. 
\end{definition}
We use `$G$ contains a good $H$' as a shorthand for `$G$ contains a good copy of $H$'. Good complete $r$-partite graphs will play a key role in our proof.  For integers $r, t_1, t_2, \ldots, t_r>0$, we let  $K_r(t_1,t_2, t_3, \ldots ,t_r)$ denote the complete $r$-partite (ordinary) graph with part-sizes $t_1, t_2, \ldots, t_r$. It will be convenient to have a slightly more compact  notation for such graphs. For $0\leq r'\leq r$, we write $K_r(\mathbf{t'}^{(r')}\mathbf{t}^{(r-r')})$ to denote the complete $r$-partite graph in which the first $r'$ parts have size $t'$ and the last $r-r'$ parts have size $t$. Similarly, we write $K_r(\mathbf{t})$ to denote an $r$-partite structure in which all $r$ parts have size $t$; and when $t=1$, we just write $K_r$ for the complete graph on $r$ vertices.
\begin{lemma}\label{lemma: if contain good Kr structure, then done}
Suppose $R\geq d+1$ is a positive integer such that 
\begin{align}\label{eq: R-condition}
(a-d+i)^R\leq (a+1)^{R-d+i-1}(a-d)^{d-i+1} && \textrm{for all }i\in[d].
\end{align}
Then if $G[Y]$ contains a good $K_r(\mathbf{R})$, $G$ must contain a vertex $v$ with
	\[p(v)\leq a^n \left(\frac{a+1}{a}\right)^{\left(\frac{r-1+x_{\star}(r+1,d)}{r}\right)n +o(n)}.\]
\end{lemma}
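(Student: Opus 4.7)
The plan is to combine the clone structure of $X = N^{(a-d)}(x) \cup \{x\}$ in $G \in \mathcal{H}$ with the good $K_r(\mathbf{R})$ in $Y$ to locate a vertex of low product-degree. Since every two vertices of $X$ are clones, letting $W_1, \ldots, W_r$ denote the parts of the good $K_r(\mathbf{R})$, the set $U := X \cup W_1 \cup \cdots \cup W_r$ is a good member of $\mathcal{T}_{r+1,d}(a, |U|)$ with canonical partition $(X, W_1, \ldots, W_r)$. Setting $\alpha := |X|/n$, the proof then splits according to whether $\alpha$ lies above or below the threshold $x_{\star}(r+1, d)$.

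If $\alpha \geq x_{\star}(r+1, d) + o(1)$, it suffices to bound $p(x)$ directly: since $w(xz) = a+1$ for $z \in Y$ and $w(xz) \leq a$ for $z \in Z$, we get $p(x) \leq (a-d)^{|X|-1}(a+1)^{|Y|}a^{|Z|}$, maximised (for fixed $\alpha$) at $|Z| = 0$, yielding $p(x) \leq (a-d)^{\alpha n - 1}(a+1)^{(1-\alpha)n}$. The defining identity $\log((a+1)/a)/\log((a+1)/(a-d)) = r x_{\star}(r+1, d)/(1 - x_{\star}(r+1, d))$ for $x_{\star}(r+1, d)$ then shows that this is at most $a^n ((a+1)/a)^{((r-1+x_{\star}(r+1,d))/r)n + o(n)}$ precisely when $\alpha \geq x_{\star}(r+1, d)$.

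In the complementary case $\alpha < x_{\star}(r+1, d)$, I would look for low product-degree among the $W_i$. For $v \in W_i$,
\[
p(v) = (a+1)^{|X| + (r-1)R}\, a^{R-1} \prod_{z \in V(G) \setminus U} w(vz),
\]
so $\log$-averaging over $v \in W_i$ and then over $i \in [r]$ reduces the task to upper-bounding $\Phi_z := \log \prod_{v \in U \setminus X} w(vz)$ for each $z \in V(G) \setminus U$. Applying the $(s, \Sigma_{r+1,d}(a,s))$-property to the $s$-set $\{z, x'\} \cup T_1 \cup \cdots \cup T_r$ (with $x' \in X$, $T_i \subseteq W_i$ of size $d+1$, so that $S(G[T]) = \Sigma_{r+1,d}(a, s-1)$) yields
\[
w(xz) + \sum_{i=1}^r \sigma_i(z) \leq (r(d+1)+1)(a+1) - (d+1),
\]
with $\sigma_i(z)$ the sum of the top $(d+1)$ edge-weights from $z$ to $W_i$. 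Since $\sigma_i(z) \leq (d+1)a + \min(m_i(z), d+1)$ with $m_i(z) := |N^{(a+1)}_{W_i}(z)|$, this forces an index $i_0(z)$ with $m_{i_0(z)}(z) \leq a+1 - w(xz)$; i.e., $z$ behaves like a canonical vertex of $V_{i_0(z)}$ in $\mathcal{T}_{r+1, d}$ up to a few defects. Using $\Phi_z \leq rR \log a + M_z \log((a+1)/a)$ with $M_z := \sum_i m_i(z) \leq (r-1)R + (a+1 - w(xz))$, summing over $z$, and invoking the $x_{\star}(r+1, d)$ identity again produces some $v \in W_i$ with $p(v) \leq a^n((a+1)/a)^{((r-1+x_{\star}(r+1,d))/r)n + o(n)}$, as required.

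Condition~\eqref{eq: R-condition} is required to control the contributions from $z \in Z$, where the weights $w(vz)$ may take intermediate values in $\{a-d+1, \ldots, a\}$ that inflate the crude upper bound on $\Phi_z$ by a term proportional to $d |Z| \log((a+1)/a)$. The inequality $(a-d+i)^R \leq (a+1)^{R-d+i-1}(a-d)^{d-i+1}$ dominates such intermediate-weight products $\prod_{v \in W_i} w(vz)$ by canonical ones with weights only in $\{a-d, a+1\}$, absorbing the $Z$-defect into the $o(n)$ averaging error. The main obstacle is precisely this second case: the $(s, q)$-property controls only \emph{sums} of edge-weights over restricted $s$-sets, whereas we need \emph{product} bounds on $\Phi_z$, and the reduction via condition~\eqref{eq: R-condition} is the technical heart of the argument. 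Ensuring that the boundary $\alpha \approx x_{\star}(r+1, d)$ is handled smoothly, and that the per-$z$ bounds are sharp enough to produce the exact exponent, will require careful bookkeeping.
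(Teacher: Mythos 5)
Your reduction (clones of $x$ make $X \cup W$ a member of $\mathcal{T}_{r+1,d}(a,|X\cup W|)$, so one should average product-degrees over $\{x\}\cup W$ and use the $(s,\Sigma)$-property to constrain each external vertex $z$) is the same high-level strategy the paper uses. But there are two genuine gaps in the execution.

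First, your derivation of the per-$z$ constraint $m_{i_0(z)}(z)\le a+1-w(xz)$ fails for intermediate values of $w(xz)$. You apply only the $(r(d+1)+2,\Sigma_{r+1,d})$-property and invoke an \emph{upper} bound $\sigma_i(z)\le (d+1)a+\min(m_i(z),d+1)$; to force some $\sigma_{i_0}$ down you would need a matching \emph{lower} bound on $\sigma_i$ in terms of $m_i$. Trying to complete this, one finds that the single-scale constraint $w(xz)+\sum_i\sigma_i(z)\le(r(d+1)+1)(a+1)-(d+1)$ is too weak. Concretely, with $d=2$, $r=2$, $R=3$ and $w(xz)=a$ (so the target is $m_{i_0}\le 1$), the configuration $m_1=m_2=2$ with all remaining weights equal to $a$ gives $\sigma_1+\sigma_2=6a+4$, which satisfies the $8$-vertex constraint $\le 6a+4$ with equality, yet violates the claim. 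The paper instead applies, for each $i\in[d]$, the $(r(d-i+2)+2,\Sigma_{r+1,d})$-property together with the inequality $\Sigma_{r+1,d}(a,r(d-i+2)+2)<\Sigma_{r+1,d-i}(a,r(d-i+2)+2)$ (equation~\eqref{eq: bound on differnence of sigmas d, d-i}); in the example above this is the $6$-vertex constraint, which correctly rules out $m_1=m_2=2$. You cannot replicate this with the top scale alone.

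Second, your Case~2 averages product-degrees over $W$ only, omitting $x$ from the geometric mean. Tracing the computation, this produces a bound of the form $\alpha+\frac{1}{rR}\sum_z(a+1-w(xz))\le x_{\star}(r+1,d)$; the term $\frac{1}{rR}\sum_z(a+1-w(xz))$ is positive as soon as $Z\neq\emptyset$, and there is no reason to expect it is bounded by $x_{\star}-\alpha$ (in your Case~2, $\alpha<x_{\star}$, but $\alpha$ can be arbitrarily close to $x_{\star}$ while $Z$ is nonempty). The paper fixes this by putting both $x$ and all of $W$ in a \emph{single} weighted geometric mean $p(x)^{x_{\star}(r+1,d)}\bigl(\prod_{w\in W}p(w)\bigr)^{(1-x_{\star}(r+1,d))/(rR)}$: the factor $w(xz)^{x_{\star}(r+1,d)}<a^{x_{\star}(r+1,d)}$ coming from $z\in Z$ exactly compensates the excess $\frac{(1-x_{\star}(r+1,d))(d-i+1)}{rR}\log\frac{a+1}{a}$ in the $W$-contribution, and condition~\eqref{eq: R-condition} is precisely what is needed for this compensation to work. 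With that single weighted mean the case split on $\alpha$ disappears entirely; both of your cases become a single computation.

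So while the skeleton (clones, good $K_r(\mathbf{R})$, a sum-property applied to small witness sets, AM--GM, geometric averaging, and condition~\eqref{eq: R-condition} to control $Z$) is identical to the paper's, the specific assembly you propose does not close: you need the multi-scale sum constraints and you need $x$ inside the averaging.
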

\begin{remark}\label{remark: bound on R}
Clearly~\eqref{eq: R-condition} is satisfied for all $R$ sufficiently large. Indeed, for $R= d(1+d+d^2)$, we have that for all $i\in [d]$,
\begin{align*}
(a+1)^{R-d+i-1}(a-d)^{d-i+1}&\geq (a+1)^{d(d+d^2)}(a-d)^{d}> \left(\left(a^{d+d^2} +(d+d^2)a^{d+d^2-1}\right)(a-d) \right)^d\\
&=\left(a^{1+d+d^2}+a^{d+d^2-1}\left(ad^2-d^2(d+1)\right) \right)^d\geq a^{d(1+d+d^2)}\geq (a-d+i)^{R},\end{align*}
where in the penultimate inequality we used $a\geq d+1$. In particular for $d=1$ we have that $R=3$ suffices --- a fact we will use in the proof of Theorem~\ref{theorem: d=1 asymptotic}.
\end{remark}
\begin{proof}
Let $W$ be a set of $rR$ vertices in $Y$ inducing a good $K_r(\mathbf{R})$, and let $\sqcup_{i=1}^rW_i$ be the associated partition of $W$ into  $R$-sets.  Since $G\in \mathcal{H}(n, s, \Sigma_{r+1,d}(a, s))$ and $R\geq d+1$, the following hold:
\begin{enumerate}[(i)]
	\item the graph $G^{(a+1)}$ is $K_{r+2}$-free. In particular if a vertex $v\in V\setminus \left(\{x\}\cup W\right)$ sends an edge of multiplicity $a+1$ to $x$, then it can send at most $(r-1)R$ edges of multiplicity $a+1$ into $W$.
	\item if $v\in V\setminus \left(\{x\}\cup W\right)$ sends an edge of multiplicity $a-d$ to $x$, then it sends exactly $rR$ edges of multiplicity $a+1$ into $W$.
	\item for $i\in [d]$, $(r(d-i+2)+2)$-sets in $G$ support at most $\Sigma_{r+1,d}(a, r(d-i+2)+2)$ edges. Now by~\eqref{eq: bound on differnence of sigmas d, d-i}, we have $\Sigma_{r+1,d}(a, r(d-i+2)+2)< \Sigma_{r+1,d-i}(a, r(d-i+2)+2)$. Further, we know by Proposition~\ref{prop: sum-extremal subgraphs, threshold for 2 vertices in extremal part} that there is a multigraph $H$ in $\mathcal{T}_{r+1,d-i}(a, r(d-i+2)+2)$ with $e(H)=\Sigma_{r+1,d-i}(a, r(d-i+2)+2)$ and whose canonical partition satisfies $\vert V_0\vert = 2$. In particular if $v\in V\setminus \left(\{x\}\cup W\right)$ sends an edge of multiplicity  $a-d+i$ to $x$, then at least one of the parts $W_i$ must receive at most $d-i+1$ edges of multiplicity $a+1$ from $v$. Indeed otherwise we could select $d-i+2$ vertices from each of the parts $W_i$ to form an $r(d-i+2)$ set $W'$ such that
	\[\Sigma_{r+1,d}(a, r(d-i+2)+2)\geq e(G[W'\cup\{x,v\}])\geq e(H)=\Sigma_{r+1,d-i}(a, r(d-i+2)+2),\] contradicting~\eqref{eq: bound on differnence of sigmas d, d-i}. Thus we have $S(v,W)\leq (a+1)\left((r-1)R+ d-i+1\right) + a\left(R-d+i-1\right)$, which by the integral AM-GM inequality (\ref{prop: integral AM-GM}) implies that $P(v, W) \leq a^{R-d+i-1}(a+1)^{(r-1)R+d-i+1}$.
\end{enumerate} 	
Now consider the quantity $p:= \left(p_G(x)\right)^{x_{\star}(r+1, d)} \left(\prod_{w\in W}p_G(w)\right)^{\frac{1-x_{\star}(r+1,d)}{rR}}$. By the observations (i)--(iii)	above, the contribution to $p$ made by a vertex $v\in V\setminus \left(\{x\}\cup W\right)$ is at most
\begin{align*}
\left\{\begin{array}{ll}
a \left( \frac{a+1}{a}\right)^{x_{\star}(r+1,d)}\left( \frac{a+1}{a}\right)^{\frac{r-1}{r}(1-x_{\star}(r+1,d))}= a \left( \frac{a+1}{a}\right)^{\frac{r-1+ x_{\star}(r+1,d)}{r}} & \textrm{if }w_G(xv)=a+1\\
a \left( \frac{a+1}{a}\right)^{-\frac{\log\left(\frac{a}{a-d}\right)}{\log\left(\frac{a+1}{a}\right)}x_{\star}(r+1,d)}\left( \frac{a+1}{a}\right)^{(1-x_{\star}(r+1,d))}= a \left( \frac{a+1}{a}\right)^{\frac{r-1+ x_{\star}(r+1,d)}{r}}& \textrm{if }w_G(xv)=a-d\\
a\left(\frac{a+1}{a}\right)^{-\frac{\log\left(\frac{a}{a-d+i}\right)}{\log\left(\frac{a+1}{a}\right)}x_{\star}(r+1,d)}
\left( \frac{a+1}{a}\right)^{\left(1-\frac{R-d+i-1}{rR}\right)(1-x_{\star}(r+1,d))} & \textrm{if  }w_G(xv)=a-d+i, \ i \in [d]
\end{array}
 \right. 
\end{align*}
Now our assumption~\eqref{eq: R-condition} on $R$ ensures that
\begin{align*}
-&\frac{\log\left(\frac{a}{a-d+i}\right)}{\log\left(\frac{a+1}{a}\right)}x_{\star}(r+1,d) + \left(1-\frac{R-d+i-1}{rR}\right)(1-x_{\star}(r+1,d))\\
&\quad =\frac{r-1+ x_{\star}(r+1,d)}{r} + \frac{\log\left( \frac{(a-d+i)^R}{(a+1)^{R-d+i-1} (a-d)^{d-i+1}}\right)}{R \log\left(\frac{(a+1)^{r+1}}{(a-d)^{r}a}\right)}\leq \frac{r-1+ x_{\star}(r+1,d)}{r}.
\end{align*}
Thus in all three cases, $v$ contributes at most $a \left( \frac{a+1}{a}\right)^{\frac{r-1+ x_{\star}(r+1,d)}{r}}$ to $p$. Since $p$ is a weighted geometric mean of the product-degrees of the vertices in $\{x\}\cup W$, it follows that there is some vertex $u\in \{x\}\cup W$ satisfying
\begin{align*}
p(u)\leq p\leq a^n \left(\frac{a+1}{a}\right)^{\left(\frac{r-1+x_{\star}(r+1,d)}{r}\right)n +o(n)},
\end{align*}
as required.	
\end{proof}	
We next prove an optimisation lemma that will be a key tool in Section~\ref{section: finding r-partite} when we try to find good $r$-partite structures in $Y$ with sufficiently large part-sizes. 
\begin{lemma}\label{lemma: if exists lowdeg vertex in YcupZ then done}
If $(a+1)^{r}(a-d)\geq a^{r+1}$ and there exist $y\in Y\cup Z$ such that
\[p_{Y\cup Z}(y) \leq a^{(1-\alpha)n} \left(\frac{a+1}{a}\right)^{\frac{r-1}{r}\beta n+ \frac{r}{r+1}(1-\alpha-\beta)n +o(n) },\]
then $G$ contains a vertex $v$ with
\[p(v)\leq a^n \left(\frac{a+1}{a}\right)^{\left(\frac{r-1+x_{\star}(r+1,d)}{r}\right)n +o(n)}.\]
\end{lemma}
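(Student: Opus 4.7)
The plan is to combine upper bounds on $p(x)$ and $p(y)$ via a two-variable min-max optimisation. First I would derive upper bounds for $p(x)$ and $p(y)$. Since $G \in \mathcal{H}(n, s, \Sigma_{r+1,d}(a,s))$, edges from $x$ to $X\setminus\{x\}$ have multiplicity exactly $a-d$, edges from $x$ to $Y$ have multiplicity exactly $a+1$, and edges from $x$ to $Z$ have multiplicity at most $a$ (by definition of $Z$). This yields
\[p(x) \leq a^n \left(\tfrac{a+1}{a}\right)^{(\beta - c\alpha)n + O(1)}, \qquad c := \tfrac{\log(a/(a-d))}{\log((a+1)/a)}.\]
Bounding $p_X(y) \leq (a+1)^{\vert X \vert}$ crudely and combining with the hypothesis on $p_{Y\cup Z}(y)$ gives
\[p(y) \leq a^n \left(\tfrac{a+1}{a}\right)^{\left(\alpha + \frac{r-1}{r}\beta + \frac{r}{r+1}(1-\alpha-\beta)\right)n + o(n)}.\]

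Setting $f(\alpha,\beta) := \beta - c\alpha$, $g(\alpha,\beta) := \frac{\alpha}{r+1} - \frac{\beta}{r(r+1)} + \frac{r}{r+1}$, and writing $t := \frac{r-1+x_{\star}(r+1,d)}{r}$ for the target exponent, the problem reduces to showing
\[\max_{\alpha,\beta \geq 0,\, \alpha+\beta\leq 1}\min\{f(\alpha,\beta),\, g(\alpha,\beta)\} \leq t.\]
A crucial preliminary observation is that the stated hypothesis $(a+1)^r(a-d) \geq a^{r+1}$ is equivalent, after taking logarithms, to the clean inequality $c \leq r$.

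The next step is to reduce the optimisation to the boundary line $\alpha+\beta=1$. If $\alpha+\beta = 1-\epsilon$ with $\epsilon > 0$, then replacing $(\alpha,\beta)$ with $(\alpha+\tfrac{\epsilon}{1+c},\, \beta+\tfrac{c\epsilon}{1+c})$ leaves $f$ unchanged (the contributions $-c\cdot\tfrac{\epsilon}{1+c}$ and $\tfrac{c\epsilon}{1+c}$ cancel) and, because $c\leq r$, weakly increases $g$ by $\tfrac{\epsilon(r-c)}{(1+c)r(r+1)}$. Hence the maximum is achieved on the segment $\alpha+\beta=1$. On this segment, $f(\alpha,1-\alpha) = 1-(c+1)\alpha$ is strictly decreasing in $\alpha$ whereas $g(\alpha,1-\alpha) = \frac{(r+1)\alpha + r^2-1}{r(r+1)}$ is strictly increasing, so the maximum of $\min\{f,g\}$ is attained at the crossing point $\alpha_\star = \tfrac{1}{rc+r+1}$. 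Writing $L := \log((a+1)/a)$ and $K := \log(a/(a-d))$ (so that $c=K/L$), one computes $\alpha_\star = \tfrac{L}{(r+1)L+rK} = x_\star(r+1,d)$ directly from the defining formula for $x_\star$, and a short verification gives $f(\alpha_\star, 1-\alpha_\star) = g(\alpha_\star, 1-\alpha_\star) = t$.

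Combining everything, $\min\{p(x), p(y)\} \leq a^n \left(\tfrac{a+1}{a}\right)^{tn + o(n)}$, producing the required low product-degree vertex. The main obstacle is not conceptual but bookkeeping-heavy: one must verify carefully that the boundary maximum aligns exactly with the canonical value $x_\star(r+1,d)$ and with the target exponent $t$, and keep track of when $c\leq r$ is used. The hypothesis $(a+1)^r(a-d)\geq a^{r+1}$ is essential precisely because it enables the boundary-reduction step; without it one could construct interior feasible points violating the desired bound.
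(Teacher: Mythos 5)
Your proposal is correct and follows essentially the same approach as the paper: the same upper bounds on $p(x)$ and $p(y)$, the same reduction of the two-variable $\min\{f,g\}$ optimisation to the boundary $\alpha+\beta=1$ via a perturbation along the direction $(1,c)$ (where you correctly identify the hypothesis $(a+1)^r(a-d)\ge a^{r+1}$ as equivalent to $c\le r$ and as exactly what makes $g$ nondecreasing under that move), and the same identification of the crossing point with $x_\star(r+1,d)$. Incidentally, your displayed $f(\alpha,1-\alpha)=1-(c+1)\alpha$ is correct; the paper's displayed $f_1(\alpha,1-\alpha)=1-c\alpha$ drops the $-\alpha$ term (a typo that does not propagate, since the paper's subsequent identification of the maximiser with $x_\star(r+1,d)$ is stated correctly).
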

\begin{remark}
	The condition $(a+1)^{r}(a-d)\geq a^{r+1}$ will always be satisfied when $d=1$ and $r\geq 2$ (which is all that we need in a proof of Theorem~\ref{theorem: d=1 asymptotic}, since the base case $(r,d)=(2,1)$ was proved in~\cite{DayFalgasRavryTreglown20+}). For larger $d$, however, this condition will only be satisfied for sufficiently large $r$ --- it is e.g. easy to check $r\geq d(d+1)$ will do with a calculation similar to that in Remark~\ref{remark: bound on R}. This suggests a more precise form of Lemma~\ref{lemma: if exists lowdeg vertex in YcupZ then done} may be one of the tools necessary to tackle the cases $d\geq 2$ of Conjecture~\ref{conjecture: main conjecture}. 
\end{remark}
\begin{proof}
We have
\begin{align}
p(x)&\leq (a-d)^{\vert X\vert -1}(a+1)^{\vert Y\vert}a^{n-\vert X\vert - \vert Y\vert}
= a^n\left(\frac{a+1}{a}\right)^{\left(-\alpha \frac{\log(a/(a-d))}{\log((a+1)/a) }+ \beta   \right)n +O(1)} \label{eq: second optim lemma, bound on px}
\end{align}
and, by our assumption on $y$,
\begin{align}
p(y)\leq (a+1)^{\vert X\vert } p_{Y\cup Z}(y)
\leq a^n \left(\frac{a+1}{a}\right)^{\alpha n+\frac{r-1}{r}\beta n+ \frac{r}{r+1}(1-\alpha-\beta)n +o(n) }\label{eq: second optim lemma, bound on py}.
\end{align}	
As in Lemma~\ref{lemma: if a+1 neighbourhoods are sparse, done} we perform some optimisation on the exponents of $(a+1)/a$ in~\eqref{eq: second optim lemma, bound on px} and~\eqref{eq: second optim lemma, bound on py} to bound $\min\{p(x), p(y)\}$. Set
\begin{align*}
f_1(\alpha, \beta)=-\alpha \frac{\log(a/(a-d))}{\log((a+1)/a) }+ \beta && \textrm{ and }&&f_2(\alpha, \beta)=\frac{r}{r+1}+\frac{\alpha}{r+1}- \frac{\beta}{r(r+1)}.\end{align*}
Let $f_3(\alpha, \beta)=\min\{f_1(\alpha, \beta), f_2(\alpha, \beta)\}$ and $S:=\{(\alpha, \beta)\in [0,1]^2:\ \alpha +\beta \leq 1 \}$.
\begin{claim}\label{claim: second optim lemma, max on the boundary}
The maximum of $f_3(\alpha, \beta)$ over $S$ is attained on the boundary $\{(\alpha, \beta): \ \alpha +\beta=1\}$.	
\end{claim}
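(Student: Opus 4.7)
The plan is to show that from any point of $S$ one can move toward the hypotenuse $\{\alpha+\beta=1\}$ along a direction that weakly increases both $f_1$ and $f_2$ simultaneously; since $f_3=\min\{f_1,f_2\}$ then also weakly increases along such a walk, the maximum over $S$ must be attained on the hypotenuse.

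The key observation is to choose the walking direction $v=(1,r)$, which is tangent to the level sets of $f_2$. Writing $c := \log(a/(a-d))/\log((a+1)/a)>0$ so that $f_1(\alpha,\beta)=-c\alpha+\beta$, the directional derivatives along $v$ are
\[D_v f_1 = -c + r \quad\text{and}\quad D_v f_2 = \frac{1}{r+1} - \frac{r}{r(r+1)} = 0.\]
The hypothesis $(a+1)^r(a-d)\geq a^{r+1}$ of Lemma~\ref{lemma: if exists lowdeg vertex in YcupZ then done} is, after taking logarithms, precisely the statement $c\leq r$. Hence $D_v f_1\geq 0$ and $D_v f_2=0$, so $f_1$, $f_2$, and therefore $f_3$, are non-decreasing along $v$.

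To conclude the argument, given any $(\alpha_0,\beta_0)\in S$ with $\alpha_0+\beta_0<1$, I would set $t=(1-\alpha_0-\beta_0)/(r+1)>0$ and $(\alpha^*,\beta^*)=(\alpha_0+t,\beta_0+rt)$. Since $v$ has non-negative entries and $\alpha^*+\beta^*=1$, the entire line segment from $(\alpha_0,\beta_0)$ to $(\alpha^*,\beta^*)$ lies inside $S$ and terminates on the hypotenuse, so by the previous paragraph $f_3(\alpha^*,\beta^*)\geq f_3(\alpha_0,\beta_0)$. This proves the claim.

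No step looks problematic: the direction $v=(1,r)$ is essentially forced by the requirement that it be a null direction of the simpler piecewise-linear function $f_2$ (whose gradient is proportional to $(r,-1)$), and the hypothesis of Lemma~\ref{lemma: if exists lowdeg vertex in YcupZ then done} is exactly what is then needed to make $D_v f_1\geq 0$. This perspective also explains, conceptually, why the condition $(a+1)^r(a-d)\geq a^{r+1}$ appears in the lemma in that particular form.
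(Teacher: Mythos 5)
Your proof is correct and uses essentially the same idea as the paper: from any interior point of $S$, move along a direction with positive components in which $f_3$ is weakly non-decreasing, until the segment hits the hypotenuse $\alpha+\beta=1$. The only (inessential) difference is the choice of null direction — you move along a level set of $f_2$ (direction $(1,r)$) and use the hypothesis $(a+1)^r(a-d)\geq a^{r+1}$ to make $f_1$ non-decreasing, whereas the paper moves along a level set of $f_1$ (direction proportional to $(\log\frac{a+1}{a},\log\frac{a}{a-d})$) and uses the same hypothesis to make $f_2$ non-decreasing.
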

\begin{proof}
Indeed, suppose $(\alpha, \beta) \in S$ is such that $\alpha+\beta <1$ . Then there exists some $\varepsilon>0$ such that the pair $(\alpha', \beta')$ given by $\alpha'=\alpha +\varepsilon \log\left(\frac{a+1}{a}\right)$  and  $\beta'=\beta+ \varepsilon \log\left(\frac{a}{a-d}\right)$
 lies in $S$. Now $f_1(\alpha', \beta')=f_1(\alpha, \beta)$ and \begin{align*}
f_2(\alpha', \beta')-f_2(\alpha, \beta)&=\frac{\varepsilon}{(r(r+1))}\log \left(\frac{(a+1)^{r}(a-d)}{a^{r+1}}\right)\geq 0,
\end{align*}
where the inequality follows from our assumption that $(a+1)^r(a-d)\geq a^{r+1}$. Thus $f_3(\alpha', \beta')\geq f_3(\alpha, \beta)$ and $\alpha'+\beta'>\alpha +\beta$. It immediately follows that the maximum of $f_3(\alpha, \beta)$ in $S$ is attained on the boundary $\alpha+\beta=1$, as claimed.
\end{proof}
By Claim~\ref{claim: second optim lemma, max on the boundary}, the maximum of $f_3(\alpha, \beta)$ over $S$ is the same as the maximum of $f_3(\alpha, 1-\alpha)$ over $\alpha\in [0,1]$. This is readily computed: \begin{align*}
f_1(\alpha, 1- \alpha)= 1-\alpha\frac{\log\left(\frac{a+1}{a-d}\right)}{\log\left(\frac{a+1}{a}\right)}, && f_2(\alpha, 1-\alpha)= \frac{r-1}{r}+\frac{\alpha}{r},
\end{align*}
and these two functions are respectively strictly decreasing and strictly increasing in $\alpha$, so that the maximum of $f_3(\alpha, 1-\alpha)$ is attained at $\alpha=x_{\star}(r+1,d)$, when the two functions are equal to $\frac{r-1+x_{\star}(r+1,d)}{r}$. Combining this optimisation result with the bounds on $p(x)$ and $p(y)$ given in~\eqref{eq: second optim lemma, bound on px} and~\eqref{eq: second optim lemma, bound on py}, we get that
\[\min\{p(x),p(y) \}\leq   a^n \left(\frac{a+1}{a}\right)^{\left(\frac{r-1+x_{\star}(r+1,d)}{r}\right)n +o(n)}, \]
thereby proving the lemma.
\end{proof}

\section{Proof of Theorem~\ref{theorem: d=1 asymptotic}}\label{section: finding r-partite}
We shall proceed by induction on $r$. The base case $r=2$ was proved in~\cite[Theorem 3.5]{DayFalgasRavryTreglown20+}. Suppose that we had proved Theorem~\ref{theorem: d=1 asymptotic} holds for all $(r',a)$ with $r'\leq r$ and $a\geq 2$, for some $r\geq 2$.

\begin{proof}[Proof of the inductive step]
Fix $a\in \mathbb{Z}_{\geq 2}$. Let $G\in \mathcal{H}\left(n,2r+2, \Sigma_{r+1,1}(a,2r+2)\right)$. Recall that by the definition of the family $\mathcal{H}$ this means all edges in $G$ have multiplicity $a-1$, $a$ or $a+1$, that $G^{(a-1)}$ is a disjoint union of cliques and that two vertices joined by an edge of multiplicity $a-1$ are clones of each other in $G$. These are key properties we shall use repeatedly in our proof.

Let $x\in V(G)$. Set $X:=N^{(a-1)}(x)\cup\{x\}$,  $Y:=N^{(a+1)}(x)$ and $Z:=V(G)\setminus\left(X\cup Y\right)= N^{(a)}(x)$. Let $\vert X\vert=\alpha n$, $\vert Y\vert=\beta n$.
\begin{definition}
	A vertex $v$ in $G$ is said to be \emph{product-poor} if $p(v)\leq a^n \left(\frac{a+1}{a}\right)^{\left(\frac{r-1+x_{\star}(r+1,1)}{r}\right)n +o(n)}$. Further, $v$ is said to be \emph{strictly product-poor} if there exists some constant $\delta>0$ such that $p(v)\leq a^n \left(\frac{a+1}{a}\right)^{\left(\frac{r-1+x_{\star}(r+1,1)}{r}-\delta\right)n  +o(n)}$.
\end{definition}
\noindent Our goal is to show that $G$ contains a product-poor vertex, which by Proposition~\ref{prop: degree removal final form} is enough to prove the inductive step. By considering the product-degree of $x$, we may thus assume that
\begin{align}\label{eq: beta bound}
\beta \geq \frac{r-1+x_{\star}(r+1,1)}{r} +o(1)> \frac{r-1}{r}.
\end{align}
\noindent Indeed, if this was not the case, then by~\eqref{eq: first optim lemma, bound on px} it would follow that $x$ is a product-poor vertex, and we would be done.

Before embarking on the main body of the proof, let us record the following elementary observations about neighbourhoods of vertices in $Y$.
	\begin{proposition}\label{prop: basic properties of Y nhoods} The following hold:
	\begin{enumerate}[(i)]
		\item $G^{(a+1)}$ is $K_{r+2}$-free;
		\item $G^{(a+1)}[Y]$ is $K_{r+1}$-free;
		\item for $R\geq 1, t\geq 0$, every $(rR+t)$-set in $Y$ spanning $\Sigma_{r,0}(a,rR+t)$ edges in $G[Y]$ induces a good $K_r(\mathbf{R}^{(r-t)}\mathbf{(R+1)}^{(t)})$ in $G[Y]$;  
		\item all edges from $Z$ to $Y$ have multiplicity at least $a$;
		\item 	if $z\in Z$ and $W$ is a $2r$-set in $Y$ inducing a good $K_r(\mathbf{2})$ in $G[Y]$, then $z$ sends at most $2r-1$ edges of multiplicity $a+1$ into in $W$.
	\end{enumerate}
\end{proposition}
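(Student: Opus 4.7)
The plan is to prove the four items in order, each reducing to a short appeal to the $(s',\Sigma_{r+1,1}(a,s'))$-property of $G$ for a suitably chosen $s'\le 2r+2$, possibly combined with one structural ingredient. For item (i), I would suppose $\{v_1,\ldots,v_{r+1}\}\subseteq Y$ carries a $K_{r+1}$ in $G^{(a+1)}$; since each $v_i\in N^{(a+1)}(x)$, the $(r+2)$-set $\{x,v_1,\ldots,v_{r+1}\}$ supports edges of total sum $(a+1)\binom{r+2}{2}$. A direct optimisation over partitions of $r+2$ vertices into $r+1$ parts in $\mathcal{T}_{r+1,1}(a,r+2)$ shows the optimum is attained with $|V_0|=1$ and one of the remaining parts of size $2$, giving $\Sigma_{r+1,1}(a,r+2)=(a+1)\binom{r+2}{2}-1$; this contradicts the $(r+2,\Sigma_{r+1,1}(a,r+2))$-property of $G$.

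For item (ii), let $W\subseteq Y$ satisfy $|W|=rR+t$ and $S(G[W])=\Sigma_{r,0}(a,rR+t)=a\binom{rR+t}{2}+\mathrm{ex}(rR+t,K_{r+1})$. Property (i) of $\mathcal{H}$ together with the $(2,a+1)$-property (inherited from $\mathcal{G}$) confines every edge multiplicity in $W$ to $\{a-1,a,a+1\}$; writing $e_j$ for the number of edges of multiplicity $j$ and matching sums gives $e_{a+1}-e_{a-1}=\mathrm{ex}(rR+t,K_{r+1})$. Item (i) applied inside $W\subseteq Y$ shows $G^{(a+1)}[W]$ is $K_{r+1}$-free, so $e_{a+1}\le\mathrm{ex}(rR+t,K_{r+1})$; consequently $e_{a-1}=0$ (so the ``goodness'' condition on edge multiplicities is met) and $G^{(a+1)}[W]$ is a Tur\'an-extremal $K_{r+1}$-free graph on $rR+t$ vertices. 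The uniqueness clause of Tur\'an's theorem then identifies $G^{(a+1)}[W]$ with the balanced complete $r$-partite graph $K_r(\mathbf{R}^{(r-t)}\mathbf{(R+1)}^{(t)})$, as required.

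Item (iii) is immediate from the cloning condition in $\mathcal{H}$: if an edge $zy$ with $z\in Z$, $y\in Y$ had $w(zy)<a$, property (i) of $\mathcal{H}$ would force $w(zy)=a-1$, whereupon property (ii) of $\mathcal{H}$ would force $z$ and $y$ to be clones; but $w(xz)=a\ne a+1=w(xy)$, a contradiction. For item (iv), I would suppose for contradiction that $z$ sends all $2r$ of its edges to $W$ with multiplicity $a+1$, and consider the $(2r+2)$-set $\{x,z\}\cup W$. Since the good $K_r(\mathbf{2})$ has $r$ within-part edges of multiplicity $a$ and $2r(r-1)$ between-part edges of multiplicity $a+1$, combining with the $2r$ multiplicity-$(a+1)$ edges from $x$ to $W$, the $2r$ assumed multiplicity-$(a+1)$ edges from $z$ to $W$, and the edge $xz$ of multiplicity $a$ (since $z\in Z=N^{(a)}(x)$), the total edge-sum on $\{x,z\}\cup W$ evaluates to $a\binom{2r+2}{2}+2r(r+1)$. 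Since $\mathrm{ex}(2r+2,K_{r+2})=2r(r+1)$ (attained by the Tur\'an graph $T(2r+2,r+1)$), this exceeds $\Sigma_{r+1,1}(a,2r+2)=a\binom{2r+2}{2}+2r(r+1)-1$ by one, contradicting the $(2r+2,\Sigma_{r+1,1}(a,2r+2))$-property of $G$. None of the four arguments involves a deep idea: the principal bookkeeping task is correctly computing the values of $\Sigma_{r+1,1}(a,\cdot)$ and $\mathrm{ex}(\cdot,K_{r+2})$ used in items (i) and (iv), and the only genuinely non-trivial external input is the uniqueness clause of Tur\'an's theorem invoked in item (ii); no step appears to pose a serious obstacle.
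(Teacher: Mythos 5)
Your proof is correct and follows essentially the same approach as the paper: part (i) is the observation that $G^{(a+1)}$ is $K_{r+2}$-free (the paper states this without computing $\Sigma_{r+1,1}(a,r+2)$, which you do explicitly), part (ii) is Tur\'an's theorem plus an edge-multiplicity count forcing $e_{a-1}=0$, part (iii) is the cloning argument, and part (iv) is precisely the paper's observation that $\{x,z\}\cup W$ would induce a good $K_{r+1}(\mathbf{2})$ spanning $\Sigma_{r+1,0}(a,2r+2)=\Sigma_{r+1,1}(a,2r+2)+1$ edges. You spell out the bookkeeping that the paper's four-line proof leaves implicit, but the ideas and route are the same.
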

\begin{proof}
Since $G\in \mathcal{H}(n,2r+2, \Sigma_{r+1,1}(a,2r+2))$, the graph $G^{(a+1)}$ must be $K_{r+2}$-free (else we have an $(r+2)$-set with strictly more than $\Sigma_{r+1,1}(a, r+2)= (a+1)\binom{r+2}{2}-1$ edges, a contradiction). This establishes part (i). In turn, part (i) implies that $Y$, being the $(a+1)$-neighbourhood of $x$, does not contain a $K_{r+1}$ in which all edges have multiplicity $a+1$, establishing part (ii).

Part (iii) follows from part (ii) and Tur\'an's theorem. Indeed, if $Y$ spans $\Sigma_{r,0}(a,rR+t)$ edges of $G$, then it needs to support at least $\mathrm{ex}(rR+t, K_{r+1})$ edges of multiplicity $a+1$ (since $G$ contains no edges of multiplicity greater than $a+1$). By Tur\'an's theorem, this implies that either $Y$ contains a copy of $K_{r+1}$, which contradicts (ii), or that $G^{(a+1)}[Y]$ is a copy of the $r$-partite Tur\'an graph on $rR+t$ vertices, with all other edges in $G[Y]$ having multiplicity exactly $a$ --- in other words, $G[Y]$ is a good $K_r(\mathbf{R}^{(r-t)}\mathbf{(R+1)}^{(t)})$, as claimed.

	Part (iv) follows from the fact that the multiplicity of edges from $Z$ to $x$ and from $Y$ to $x$ have different multiplicities. Thus vertices in $Z$ and $Y$ cannot be clones of each other, whence by definition of $\mathcal{H}(n, 2r+2, \Sigma_{r+1,1}(a, 2r+2) )$ they cannot be joined by edges of multiplicity $a-1$.

	For part (v), observe that otherwise $z,x$ together with $W$	 induces a good $K_{r+1}(\mathbf{2})$ in $G$, i.e. a $2r+2$-set spanning $\Sigma_{r+1,0}(a, 2r+2)=\Sigma_{r+1,1}(a, 2r+2)+1$ edges, a contradiction.
\end{proof}
Suppose $G[Y]$ does not contain a good $K_r(\mathbf{2})$. Then by Proposition~\ref{prop: basic properties of Y nhoods}~(iii), this implies that every $2r$-set in $Y$ spans at most $\Sigma_{r,0}(a, 2r)-1=\Sigma_{r,1}(a, 2r)$ edges. Then by our inductive hypothesis and averaging, $G[Y]$ must contain some vertex $y$ with product-degree
\[p_Y(y)\leq a^{\beta n}\left(\frac{a+1}{a}\right)^{\frac{r-2+x_{\star}(r, 1)}{r-1}\beta n+o(n)},\]
whence $G$ contains a product-poor vertex by Lemma~\ref{lemma: if a+1 neighbourhoods are sparse, done} and we are done.

We may thus assume that $G[Y]$ contains a good $K_r(\mathbf{2})$. By Lemma~\ref{lemma: if contain good Kr structure, then done} and Remark~\ref{remark: bound on R}, we may further assume that $G[Y]$ does not contain a good $K_r(\mathbf{3})$ (since otherwise $G$ contains a product-poor vertex). Let us then define $t$: $0\leq t\leq r-1$ to be the largest integer such that $G[Y]$ contains a good $K_r(\mathbf{2}^{(r-t)}\mathbf{3}^{(t)})$. Let $U_i=\{u_{i,1}, u_{i,2}\}$, $i\in [r-t]$, and $W_i=\{w_{i,1}, w_{i,2}, w_{i, 3} \}$ , $i\in [t]$ be $r$ disjoint sets of vertices in $Y$ that induce such a structure, with all edges in the $U_i^{(2)}$ and the $W_i^{(2)}$ having multiplicity $a$, and all other edges between these sets having multiplicity $a+1$. Set $U:=\bigcup_{i=1}^{r-t}U_i$ and $W:=\bigcup_{i=1}^{t}W_i$.

\begin{lemma}\label{lemma: t>0}
	Either $t>0$ or $G$ contains a product-poor vertex.
\end{lemma}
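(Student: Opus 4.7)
My plan is to assume $t=0$ and deduce that $G$ must contain a product-poor vertex, which by Proposition~\ref{prop: degree removal} is enough. The strategy combines a three-way structural classification of $Y \setminus U$, made possible by the assumption $t=0$, with a weighted geometric averaging over $\{x\} \cup U$.

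First I would apply the $(2r+2, \Sigma_{r+1,1}(a,2r+2))$-property to $\{x,y\} \cup U$ for each $y \in Y \setminus U$, obtaining $S(y, U) \leq 2r(a+1) - 2$; writing $k, k', k''$ for the number of edges of multiplicity $a+1, a, a-1$ from $y$ to $U$ respectively, this reads $k - k'' \leq 2r - 2$. Combining this with the clone property of $\mathcal{H}$, I would classify $Y \setminus U$ into three classes. The \emph{clone class} $C$ consists of those $y$ sending some $(a-1)$-edge to $U$; such a $y$ is a clone of the corresponding $u_{i,j}$, so $P(y, U) = a(a-1)(a+1)^{2r-2}$ exactly. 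The \emph{double-$a$ class} $A$ consists of those $y \in (Y \setminus U)\setminus C$ sending both $a$-edges to some pair $U_i$; here is the crucial use of $t=0$, which forbids $y$ from being compatible with $U_i$ (else $U \cup \{y\}$ would induce a good $K_r(\mathbf{2}^{(r-1)}\mathbf{3})$), so $y$ sends a non-$(a+1)$-edge to some $u_{j,k}$ with $j \neq i$. A short clone argument rules out multiplicity $a-1$ for this edge (it would force $y$ to be a clone of $u_{j,k}$ and hence $w(y u_{i,1}) = a+1$, contradicting $y \in A_i$), so the edge has multiplicity $a$, giving $y$ at least three $a$-edges to $U$ and $P(y, U) \leq a^3(a+1)^{2r-3}$. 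The \emph{spread class} $\mathrm{Sp}$ collects the remaining $y$, for which the default bound $P(y, U) \leq a^2(a+1)^{2r-2}$ holds. In addition, $v \in X \setminus \{x\}$ are clones of $x$, giving $P(v, U) = (a+1)^{2r}$, while Proposition~\ref{prop: basic properties of Y nhoods}~(iv) gives $P(z, U) \leq a(a+1)^{2r-1}$ for $z \in Z$.

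I would then form the weighted geometric mean
\[ p := p(x)^{\alpha_0} \prod_{u \in U} p(u)^{\alpha_1/(2r)}, \qquad \alpha_0 + \alpha_1 = 1, \]
and aim to choose $\alpha_0, \alpha_1$ so that for every $v \in V \setminus (\{x\} \cup U)$, the contribution of $v$ to $\log p$ is at most $\log a + \frac{r - 1 + x_{\star}(r+1,1)}{r}\log\frac{a+1}{a}$. The natural choice $\alpha_0 = x_{\star}(r+1,1)$ and $\alpha_1 = 1 - x_{\star}(r+1,1)$ (matching the canonical partition weights in the Tur\'an-like construction) saturates the $\mathrm{Sp}$-contribution at equality and makes the $A$- and $C$-contributions strictly smaller; since $p$ is a weighted geometric mean over $\{p(v) : v \in \{x\} \cup U\}$, some $v^*$ in this set satisfies $p(v^*) \leq p$, producing the product-poor vertex.

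The main obstacle will be parameter regimes (particularly small $a$, where $x_{\star}(r+1,1) < 1/(2r+1)$) in which the canonical weights violate the $Z$- or $X$-constraints, so no single $(\alpha_0, \alpha_1)$ satisfies all four constraints simultaneously. To handle these I expect to apply Lemma~\ref{lemma: if exists lowdeg vertex in YcupZ then done} in parallel: the upper bound on $\log p_{Y \cup Z}(u_{i,j})$ is decreased by $|C_{i,j}|\log\frac{a+1}{a-1} + |C_{i,j'}|\log\frac{a+1}{a}$ thanks to the clones, and if the weighted averaging above fails, then by averaging this bound over the $2r$ pairs $(i,j)$ one should find a pair satisfying the hypothesis of Lemma~\ref{lemma: if exists lowdeg vertex in YcupZ then done}, again producing a product-poor vertex. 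Merging these two complementary arguments --- weighted averaging over $\{x\} \cup U$ and Lemma~\ref{lemma: if exists lowdeg vertex in YcupZ then done} applied via clone counts --- should cover all parameter regimes and complete the proof.
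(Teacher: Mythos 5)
Your plan essentially coincides with the paper's in its opening move (assume $t=0$, classify how vertices in $Y\setminus U$ interact with $U$ using $K_{r+1}$-freeness together with the maximality of $t$, then average), but it diverges in a way that leaves a genuine gap. Your primary argument is a weighted geometric average over $\{x\}\cup U$ with weights $(x_\star(r+1,1), 1-x_\star(r+1,1))$, targeting a direct verification that each outside vertex contributes at most $a\left(\frac{a+1}{a}\right)^{\frac{r-1+x_\star(r+1,1)}{r}}$. As you yourself note, the $Z$-constraint requires $x_\star(r+1,1)\geq\frac{1}{2r+1}$, which fails already at $a=2,\,r=2$ (where $x_\star(3,1)=\log(3/2)/\log(27/2)\approx 0.156<1/5$). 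So your primary argument simply does not prove the statement for small $a$, and what is offered in its place --- clone-counting corrections feeding into Lemma~\ref{lemma: if exists lowdeg vertex in YcupZ then done} applied ``in parallel'' --- is a sketch, not a proof: no bound on $\theta:=|Z|/n$ or on the clone-class sizes is extracted from the failure of the first argument, so there is nothing to average against, and the claim that ``one should find a pair satisfying the hypothesis of Lemma~\ref{lemma: if exists lowdeg vertex in YcupZ then done}'' is not substantiated.

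The missing idea is that one need not run these two arguments conditionally at all: the $U$-only geometric averaging, fed directly into Lemma~\ref{lemma: if exists lowdeg vertex in YcupZ then done}, works unconditionally once $\beta>\frac{r-1}{r}$ (which you already have from considering $p(x)$). One also does not need your three-way classification or any clone bookkeeping. The uniform bound $P(y,U)\leq a^3(a+1)^{2r-3}$ holds for \emph{every} $y\in Y\setminus U$: if $y$ sends an $(a-1)$-edge into $U$ this follows from $(a-1)(a+1)<a^2$; otherwise, by $t=0$ and $K_{r+1}$-freeness of $G^{(a+1)}[Y]$, $y$ sends at most $2(r-1)-1$ edges of multiplicity $a+1$ into $U$. (Your ``spread'' bound $a^2(a+1)^{2r-2}$ is therefore not tight --- a spread vertex with $2r-2$ edges of multiplicity $a+1$ and no $(a-1)$-edge would give either a good $K_r(\mathbf{2}^{(r-1)}\mathbf{3})$, contradicting $t=0$, or a $K_{r+1}$ in $G^{(a+1)}[Y]$.) Combining this with $P(z,U)\leq a(a+1)^{2r-1}$ for $z\in Z$ from Proposition~\ref{prop: basic properties of Y nhoods}~(iv) and averaging over $u\in U$ yields
\[p_{Y\cup Z}(u)\leq a^{(1-\alpha)n}\left(\tfrac{a+1}{a}\right)^{\left(\frac{r-1}{r}-\frac{1}{2r}\right)\beta n+\left(\frac{r-1}{r}+\frac{1}{2r}\right)(1-\alpha-\beta)n+O(1)},\]
and the exponent is at most $\frac{r-1}{r}\beta+\frac{r}{r+1}(1-\alpha-\beta)$ exactly because $\beta>\frac{r-1}{r}$. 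This puts $u$ into the hypothesis of Lemma~\ref{lemma: if exists lowdeg vertex in YcupZ then done} and finishes the proof without any case split on $a$.
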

\begin{proof}
Suppose $t=0$. Fix a vertex $y\in Y\setminus U$. If $y$ sends at least one edge of multiplicity $a+1$ to each of the $r$ parts $U_1,\  U_2, \ldots , \ U_r$, then $G^{(a+1)}[U\cup \{y\}]$ contains a copy of $K_{r+1}$, contradicting Proposition~\ref{prop: basic properties of Y nhoods} (ii). Thus every vertex $y \in Y\setminus U$ sends edges of multiplicity $a+1$ to at most $r-1$ of the parts $U_i$, $i\in [r]$, and in particular sends at most $2(r-1)$ such edges into $U$ in total. Further, if $y$ sends exactly $2(r-1)$ edges of multiplicity $a+1$ into $U$, then there exists a unique part $U_i$, $i\in [r]$, to which it sends no such edge. By the maximality of $t$, at least one of the edges $y$ sends into this unique part $U_i$ must then have multiplicity $a-1$ (for otherwise $G$  would have a good  $K_r(\mathbf{2}^{(r-1)}\mathbf{3}^{(1)})$ living inside the set $U\cup\{y\}\subseteq Y$).

Since $(a-1)(a+1)<a^2$, it follows from the observations in the paragraph above that for all $y\in Y\setminus U$,
\[p_U(y)\leq \max\left\{(a+1)^{2(r-1)}a(a-1),\ (a+1)^{2(r-1)-1}a^3\right\}= (a+1)^{2(r-1)-1}a^3.\]
 Further, by Proposition~\ref{prop: basic properties of Y nhoods} (v), each vertex $z\in Z$ can send at most $2(r-1)+1$ edges of multiplicity $a+1$ into $U$, so that 
 \begin{align*}
 p_U(z)\leq (a+1)^{2r-1}a.
 \end{align*}
 By averaging over vertices in $U$, it follows that some $u\in U$ satisfies
\begin{align*}p_{(Y\setminus U)\cup Z}(u)&\leq  \left(\prod_{y\in Y\setminus U} p_U(y)\prod_{z\in Z} p_U(z)\right)^{\frac{1}{\vert U\vert}}\leq \left((a+1)^{2(r-1)-1}a^3\right)^{\frac{\vert Y\setminus U\vert}{2r}} \left((a+1)^{2r-1}a\right)^{\frac{\vert Z\vert}{2r} }\\
&= a^{(1-\alpha)n}\left(\frac{a+1}{a}\right)^{\left(\frac{r-1}{r}-\frac{1}{2r}\right)\beta n + \left(\frac{r-1}{r}+\frac{1}{2r}\right)(1-\alpha-\beta)n+O(1)}.\end{align*}
Appealing to the bound $\beta>(r-1)/r$ from~\eqref{eq: beta bound}, we have
\begin{align*}
\left(\frac{r-1}{r}\beta +\frac{r}{r+1}(1-\alpha-\beta) \right)&- \left(\left(\frac{r-1}{r}-\frac{1}{2r}\right)\beta  + \left(\frac{r-1}{r}+\frac{1}{2r}\right)(1-\alpha-\beta)\right)\\
& \geq \frac{\beta}{2r}- (1-\beta)\left(\frac{1}{2r}-\frac{1}{r(r+1)}\right)> \frac{1}{r^2}\left(\frac{r-2}{2}+\frac{1}{r(r+1)} \right)> 0.
\end{align*}
This implies that
\[p_{Y\cup Z}(u)= \left((a+1)^{2(r-1)}a\right) p_{(Y\setminus U)\cup Z}(u) \leq a^{(1-\alpha)n}\left(\frac{a+1}{a}\right)^{\frac{r-1}{r}\beta n +\frac{r}{r+1}(1-\alpha-\beta)n+O(1)},\]
whence $G$ contains a product-poor vertex by Lemma~\ref{lemma: if exists lowdeg vertex in YcupZ then done}.
\end{proof}
\begin{lemma}\label{lemma: t geq r-1}
	If $0<t\leq r-2$, then $G$ contains a product-poor vertex.
\end{lemma}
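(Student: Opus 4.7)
The plan is to mirror the proof of Lemma~\ref{lemma: t>0}: I will exhibit a vertex $v^\star \in U \cup W$ satisfying the hypothesis of Lemma~\ref{lemma: if exists lowdeg vertex in YcupZ then done}, which then delivers a product-poor vertex. Write $N := |U \cup W| = 2r + t$. The double-counting identity
\[
\prod_{v \in U \cup W} p_{Y \cup Z}(v) = P(U \cup W)^2 \cdot \prod_{u \in Y \setminus (U \cup W)} \prod_{v \in U \cup W} w(uv) \cdot \prod_{u \in Z} \prod_{v \in U \cup W} w(uv)
\]
(vertices of $X$ do not contribute to $p_{Y \cup Z}$), together with bounds on the two outer products, will yield such a $v^\star$ after taking an $N$-th root.

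For $y \in Y \setminus (U \cup W)$ I aim to show $\prod_v w(yv) \leq a^3 (a+1)^{N-3}$. Let $S_y := N^{(a+1)}(y) \cap (U \cup W)$. By Proposition~\ref{prop: basic properties of Y nhoods}(i), $S_y$ must entirely miss some part of the good $K_r(\mathbf{2}^{(r-t)}\mathbf{3}^{(t)})$ structure on $U \cup W$. If a triple $W_j$ is missed, then $|S_y| \leq N-3$ and the bound is immediate (any $a-1$ edge can only come from $\mathcal{H}$'s clone structure, making the product smaller). If only a double $U_i$ is missed with $|S_y| = N-2$, then both $y u_{i,l}$ lie in $\{a-1,a\}$; were they both equal to $a$, replacing $U_i$ by $U_i \cup \{y\}$ would extend the structure to a good $K_r(\mathbf{2}^{(r-t-1)}\mathbf{3}^{(t+1)})$, contradicting the maximality of $t$. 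So some $y u_{i,l} = a-1$, which by the clone property of $\mathcal{H}$ pins down every other edge from $y$ into $U \cup W$, giving product exactly $(a-1)a(a+1)^{N-2} \leq a^3(a+1)^{N-3}$. For $z \in Z$, all $z$-to-$(U \cup W)$ edges lie in $\{a,a+1\}$ (Proposition~\ref{prop: basic properties of Y nhoods}(iii)). Applying Proposition~\ref{prop: basic properties of Y nhoods}(iv) to each of the $3^t$ $K_r(\mathbf{2})$-subsets $U \cup W'$ (where $W'$ selects a 2-subset from each $W_j$) and summing over $W'$ (noting each $U$-vertex lies in all $3^t$ sets, each $W$-vertex in $2 \cdot 3^{t-1}$) gives $3 e_U + 2 e_W \leq 6r - 3$, where $e_U, e_W$ count $a+1$-edges from $z$ to $U$ and $W$. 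A short LP then forces $e_U + e_W \leq N - 1$, so $\prod_v w(zv) \leq a(a+1)^{N-1}$.

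Substituting these two bounds and extracting an $N$-th root, some $v^\star \in U \cup W$ satisfies
\[
p_{Y \cup Z}(v^\star) \leq a^{(1-\alpha)n} \left(\frac{a+1}{a}\right)^{\frac{(N-3)\beta + (N-1)(1-\alpha-\beta)}{N}\, n + o(n)}.
\]
To invoke Lemma~\ref{lemma: if exists lowdeg vertex in YcupZ then done}, I need this exponent bounded by $\frac{r-1}{r}\beta + \frac{r}{r+1}(1-\alpha-\beta) + o(1)$; clearing denominators reduces this to $(r-t)(r+1)\beta \geq r(r+t-1)(1-\alpha-\beta)$, and then, using $\alpha \geq 0$ together with $\beta \geq (r-1)/r$ from~\eqref{eq: beta bound}, to $r^3 - r^2(1+t) - t(r-1) \geq 0$. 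The left-hand side is linear and decreasing in $t$, and equals $3r - 2 > 0$ at $t = r - 2$, so the inequality holds throughout the range $1 \leq t \leq r - 2$. The principal technical obstacle is the case analysis bounding the $y$-contribution, where one must carefully juggle $K_{r+1}$-freeness, maximality of $t$, and the clone structure of $\mathcal{H}$ to rule out the `dangerous' configuration $|S_y| = N - 2$.
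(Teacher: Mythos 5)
Your proof is correct, and it takes a genuinely different route from the paper's. The paper's argument partitions $Y\setminus(U\cup W)$ into three types and $Z$ into two types according to how many multiplicity-$(a+1)$ edges they send into $U$ versus $W$, forms separate geometric means $p_U$, $p_W$ over $U$ and $W$, invokes Lemma~\ref{lemma: if a+1 neighbourhoods are sparse, done} to deduce the auxiliary constraint $\theta_1-2\theta_2>1-\tfrac{3t}{r-1}$ on the type proportions, and then analyzes the weighted mixture $p_U^{(r-t)/r}p_W^{t/r}$. You instead take an unweighted geometric mean over all of $U\cup W$ and establish a uniform bound on the contribution of every external vertex: for $y\in Y\setminus(U\cup W)$ you show $\prod_{v\in U\cup W}w(yv)\leq a^3(a+1)^{N-3}$ by combining $K_{r+1}$-freeness of $G^{(a+1)}[Y]$, the maximality of $t$ (which rules out extending $U_i$ to a triple when $|S_y|=N-2$), and the clone structure from $\mathcal{H}$ (which forces a multiplicity-$(a-1)$ edge in the remaining case, and since $(a-1)(a+1)<a^2$ the product still falls under the bound); for $z\in Z$ you apply Proposition~\ref{prop: basic properties of Y nhoods}~(iv) simultaneously to all $3^t$ good $K_r(\mathbf{2})$ substructures $U\cup W'$ and solve the resulting small LP to get $e_U+e_W\leq N-1$. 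This sidesteps the type-classification and the auxiliary sparsity constraint entirely; the price is a final numerical inequality, $r^3-r^2(1+t)-t(r-1)\geq 0$, which you correctly verify holds on the whole range $1\leq t\leq r-2$ under $\beta\geq\tfrac{r-1}{r}$ and $\alpha\geq 0$ (minimizing at $t=r-2$, where it equals $3r-2>0$). The two arguments are comparable in length, but yours is arguably cleaner and more self-contained; the paper's may be easier to adapt to the delicate boundary case $t=r-1$, which neither approach covers here and which the paper treats separately.
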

\begin{proof}
Suppose $0<t\leq r-2$ (and in particular $W\neq \emptyset$). Let $p_U$ and $p_W$ be the geometric-mean of the product-degrees $p_{Y\cup Z}(v)$ over $v\in U$ and $v\in W$ respectively,
$p_U:=\prod_{v\in U}p_{Y\cup Z}(v)^{\frac{1}{2(r-t)}}$ and $p_W:=\prod_{v\in W}p_{Y\cup Z}(v)^{\frac{1}{3t}}$.

Let us consider what contribution a vertex $y\in Y\setminus \left(U\cup W\right)$ can make to $p_U$ and $p_W$.  Recall that by Proposition~\ref{prop: basic properties of Y nhoods}(ii), $G^{(a+1)}[Y]$ is $K_{r+1}$ free, whence every vertex $y\in  Y$ can send an edge of multiplicity $a+1$ to at most $r-1$ of the parts $U_1, \ldots, U_{r-t}$, $W_1, \ldots W_t$. It follows that all such vertices $y$ fall into exactly one of the following three types.
\begin{itemize}
	\item \textbf{Type Y1:} $y$ sends edges of multiplicity $a+1$ to all of $W$. This implies that there is at least one part $U_i$, $i\in [r-t]$, such that $y$ sends no edge of multiplicity $a+1$ into $U_i$, and in particular that $y$ sends at most $2(r-t-1)$ edges into $U$. Further, if $y$ sends exactly $2(r-t-1)$ edges into $U$ then the aforementioned part $U_i$ is unique, and it follows from the maximality of $t$ that $y$ sends an edge of multiplicity $a-1$ into $U_i$ (since otherwise $G[\{y\}\cup U\cup W]$ would contain a good $K_{r}(\mathbf{2}^{r-t-1}\mathbf{3}^{(t+1)})$, contradicting the maximality of $t$). Thus $p_U(y)$ is at most $(a+1)^{2(r-t-1)}a(a-1)$ if $y$ send $2(r-t-1)$ edges of multiplicity $a+1$ into $U$, and at most $(a+1)^{2(r-t-1)-1}a^3$ otherwise. Since $(a+1)(a-1)<a^2$, it  follows that $y$'s contribution to $p_U$ is at most $a\left(\frac{a+1}{a}\right)^{1-\frac{3}{2(r-t)}}$, while its contribution to $p_W$ is $a+1$;
	
	\item \textbf{Type Y2:} $y$ sends edges of multiplicity $a+1$ to at least $2(r-t-1)+1$ vertices in $U$. Then by the pigeon-hole principle, $y$ sends an edge of multiplicity $a+1$ to each of the parts $U_i$, $i\in [r-t]$, whence there is at least one part $W_j$, $j\in [t]$ that receives no edge of multiplicity $a+1$ from $y$. In particular, $y$ sends at most $3(t-1)$ edges of multiplicity $a+1$ to $W$. It follows that $y$ contributions to $p_U$ and $p_W$ are at most $(a+1)$ and $a\left(\frac{a+1}{a}\right)^{1-\frac{1}{t}}$ respectively;

	\item \textbf{Type Y3}: $y$ sends at most $2(r-t-1)$ edges of multiplicity $a+1$ into $U$ and at most $3t-1$ edges of multiplicity $a+1$ in $W$. Its contribution to $p_U$ and $p_W$ are thus at most $a\left(\frac{a+1}{a}\right)^{1-\frac{1}{r-t}}$ and $a\left(\frac{a+1}{a}\right)^{1-\frac{1}{3t}}$ respectively.
\end{itemize}
	We now turn our attention to the contributions of vertices $z\in Z$ to $p_U$ and $p_W$. Recall that by Proposition~\ref{prop: basic properties of Y nhoods}(iv), all edges from $Z$ to $U\cup W\subseteq Y$ have multiplicity $a$ or $a+1$. Further, by Proposition~\ref{prop: basic properties of Y nhoods}(v), each $z\in Z$ can send at most $2r-1$ edges of multiplicity $a+1$ into a $2r$-set inducing a copy of a good $K_r(\mathbf{2})$. In particular, if $z$ sends edges of multiplicity $a+1$ to all $2(r-t)$ vertices of $U$, it must be the case that there is one part $W_i$, $i\in [t]$ receiving at most one edge of multiplicity $a+1$ from $z$, and thus $z$ can send at most a total of $3(t-1)+1$ edges of multiplicity $a+1$ into $W$. It follows from these observations that vertices $z\in Z$ fall into one of the following two mutually exclusive types:
	\begin{itemize}
		\item \textbf{Type Z1:} $z$ sends at most $2(r-t-1)+1$ edges of multiplicity $a+1$ into $U$, whence its contributions to $p_U$ and $p_W$ are at most $a\left(\frac{a+1}{a}\right)^{1-\frac{1}{2(r-t)}}$ and $a+1$ respectively;
		\item \textbf{Type Z2}: $z$ sends $2(r-t)$ edges of multiplicity $a+1$ into $U$ and at most $3(t-1)+1$ such edges into $W$, whence its contributions to $p_U$ and $p_W$ are at most $a+1$ and $a\left(\frac{a+1}{a}\right)^{1-\frac{2}{3t}}$ respectively. 
	\end{itemize}
For $i\in [3]$ let $\theta_i$ be the proportion of vertices in $Y\setminus\left(U\cup W\right)$ of Type Yi, and let $\phi$ be the proportion of vertices in $Z$ of Type Z1. Plugging in our upper bounds on the contributions of the vertices of the various types to $p_U$ and $p_W$, and recalling that $\vert Y\vert =\beta n$, $\vert Z\vert =(1-\alpha-\beta) n$ and $\theta_1+\theta_2+\theta_3=1$, we have 
\begin{align*}
\frac{p_U}{a^{\vert Y\cup Z\vert}}&\leq   		\left(\frac{a+1}{a}\right)^{ \left(\theta_1\left(1-\frac{3}{2(r-t)}\right) +\theta_2 + \theta_3 \left(1-\frac{1}{r-t}\right)\right)\vert Y\vert + \left(\phi \left(1-\frac{1}{2(r-t)}\right)+(1-\phi)\right)\vert Z\vert + \vert U\cup W\vert}\\
&=\left(\frac{a+1}{a}\right)^{\left(f_U\beta  + g_U(1-\alpha-\beta)\right)n +O(1)},
\end{align*}
and
\begin{align*}
\frac{p_W}{a^{\vert Y\cup Z\vert}}&\leq   		\left(\frac{a+1}{a}\right)^{ \left(\theta_1+\theta_2 \left(1-\frac{1}{t}\right)+ \theta_3 \left(1-\frac{1}{3t}\right)\right)\vert Y\vert + \left(\phi +(1-\phi)\left(1-\frac{2}{3t}\right)\right)\vert Z\vert+\vert U\cup W\vert}\\ &=\left(\frac{a+1}{a}\right)^{\left(f_W\beta  + g_W(1-\alpha-\beta)\right)n +O(1)},
\end{align*}
where the functions $f_U=f_U(\theta_1, \theta_2)$, $f_W=f_W(\theta_1, \theta_2)$, $g_U=g_U(\phi)$, $g_W=g_W(\phi)$ are given by 
\begin{align*}
f_U:=
&1-\frac{1}{r-t} -\frac{\theta_1}{2(r-t)} +\frac{\theta_2}{r-t}, &&
f_W:=1-\frac{1}{3t}+\frac{\theta_1}{3t}-\frac{2\theta_2}{3t} \\
g_U:=&1-\frac{\phi}{2(r-t)},  && g_W:=1-\frac{2}{3t}+\frac{2\phi}{3t}.
\end{align*}
We shall consider a weighted geometric mean of $p_U$ and $p_W$ to deduce from the information above that $G$ contains a product-poor vertex.  In order to do so, we shall need a constraint on the values of $\theta_1, \theta_2$, which will follow from the claim below.
\begin{claim}\label{claim: thetai constraint}
If $f_W \leq \frac{r-2}{r-1}$, then $G$ contains product-poor vertex.
\end{claim}
\begin{proof}
Our classification of vertices of $Y\setminus (U\cup W)$ into types Y1, Y2 and Y3 and our bound on their contributions to $p_W$ imply that 
\begin{align*}
\left(\prod_{w\in W}\frac{p_Y(w)}{a^{\vert Y\vert}}\right)^{\frac{1}{\vert W\vert}} \leq  \left(\frac{a+1}{a}\right)^{f_W\vert Y\vert +\vert U\cup W\vert}.
\end{align*}	
Thus if $f_W\leq \frac{r-2}{r-1}$, then substituting in $\vert Y\vert =\beta n$ in the bound above, we see by averaging that there exists $w\in W$ with  product-degree at most $a^{\frac{1}{r-1}\beta n}(a+1)^{\frac{r-2}{r-1}\beta n +O(1)}$ in $Y$. By Lemma~\ref{lemma: if a+1 neighbourhoods are sparse, done} this $w$'s existence implies $G$ contains a product-poor vertex, proving our claim.
\end{proof}
\noindent We may thus assume that $f_W>\frac{r-2}{r-1}$, which by rearranging terms implies that
\begin{align}\label{eq: bound on theta1-Rtheta2}
\theta_1-2\theta_2> 1-\frac{3t}{r-1}.
\end{align}
Now since $\beta>\frac{r-1}{r}$ by~\eqref{eq: beta bound}, we have:
\begin{align*}
\left(\frac{r-1}{r}\beta +\frac{r}{r+1}(1-\alpha-\beta) \right)&-\left(\left(\frac{r-t}{r}f_U+\frac{t}{r}f_W\right)\beta + \left(\frac{r-t}{r}g_U+\frac{t}{r}g_W\right)(1-\alpha-\beta)\right)\\
&=\beta \frac{2+\theta_1-2\theta_2}{6r}-(1-\alpha-\beta)\left(\frac{1}{3r}+\frac{\phi}{6r}-\frac{1}{r(r+1)}\right)\\
&> \frac{1}{6r^2}\left((r-1)\left(2+\theta_1-2\theta_2\right)- \left(2+\phi -\frac{6}{r+1)}\right) \right)\\
&> \frac{1}{6r^2}\left(3(r-1-t)   - 3\right)\geq 0,
\end{align*}
where the last three inequalities follow from~\eqref{eq: bound on theta1-Rtheta2}, $\phi\leq 1$ and the assumption $t\leq r-2$. Consider now $p:=\left(p_U\right)^{\frac{r-t}{r}}\left(p_W\right)^{\frac{t}{r}}$. By the inequality we have just proved, and our bounds on $p_U$, $p_W$ in terms of $f_U$, $f_W$, $g_U$ and $g_W$,
\begin{align*}
p= a^{\vert Y\cup Z\vert} \left(\frac{p_U}{a^{\vert Y\cup Z\vert}}\right)^{\frac{r-t}{r}}\left(\frac{p_W}{a^{\vert Y\cup Z\vert}}\right)^{\frac{t}{r}}&\leq a^{(1-\alpha)n}\left(\frac{a+1}{a}\right)^{\frac{r-t}{r}\left(f_U \beta +g_U(1-\alpha-\beta)\right)n + \frac{t}{r}\left(f_W\beta + g_W(1-\alpha-\beta)\right)n +O(1)}\\ 
&\leq a^{(1-\alpha)n}\left(\frac{a+1}{a}\right)^{\frac{r-1}{r}\beta n+ \frac{r}{r+1}(1-\alpha-\beta)n+O(1)}.\end{align*}
Since $p$ is a weighted geometric mean of the product-degrees of vertices from $U\cup W$ in $Y\cup Z$, it follows some $v\in U\cup W$ satisfies $p_{Y\cup Z}(v)\leq p$. Given our upper-bound on $p$, this implies by Lemma~\ref{lemma: if exists lowdeg vertex in YcupZ then done} that $G$ contains a product-poor vertex, and we are done. 
\end{proof}
By Lemma~\ref{lemma: t geq r-1}, we may thus assume $t=r-1$. This is by far the most delicate case. As a first step, we show that it is enough for us to find an `almost' good $K_r(\mathbf{3})$. 
\begin{definition}
Let $H$ be the (ordinary) graph obtained from $K_r(\mathbf{3})$ by deleting one edge. We say that $G$ contains an \emph{almost good} $K_r(\mathbf{3})$ if it contains a good copy of $H$.
\end{definition}
\begin{lemma}\label{lemma: almost good is good enough}
If $G[Y]$ contains an almost good $K_r(\mathbf{3})$, then $G$ conducts a product-poor vertex.	
\end{lemma}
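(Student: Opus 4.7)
The plan is to rerun the proof of Lemma~\ref{lemma: if contain good Kr structure, then done} with $R=3$, showing that the almost good $K_r(\mathbf{3})$ available here suffices in place of a genuinely good one. Let $W=\bigsqcup_{i=1}^{r}W_i$ with $|W_i|=3$ denote the vertex set of the almost good $K_r(\mathbf{3})$ in $G[Y]$, and let $u_1\in W_1$, $u_2\in W_2$ be the endpoints of the missing edge; note that $w_G(u_1u_2)=a$ exactly, since this edge has multiplicity at least $a$ but not $a+1$.

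The first step is to verify that the three combinatorial bounds (i)--(iii) on the contributions of vertices in $V(G)\setminus(\{x\}\cup W)$ from the proof of Lemma~\ref{lemma: if contain good Kr structure, then done} still hold in this almost good setting, with the same numerical values. For $y\in Y\setminus W$, Proposition~\ref{prop: basic properties of Y nhoods}(i) forbids a $K_{r+1}$ in $G^{(a+1)}[Y]$, so $N^{(a+1)}_W(y)$ cannot contain any transversal $\{v_1,\dots,v_r\}$ with $v_i\in W_i$ and $\{v_1,v_2\}\neq\{u_1,u_2\}$; a short case analysis on the intersections $N^{(a+1)}_W(y)\cap W_i$ shows $|N^{(a+1)}_W(y)|\leq 3r-3$, giving $P(y,W)\leq(a+1)^{3r-3}a^3$. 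For $v\in X\setminus\{x\}$, the cloning property of $\mathcal{H}$ gives $w_G(vu)=w_G(xu)=a+1$ for every $u\in Y\supseteq W$, so $P(v,W)=(a+1)^{3r}$. For $z\in Z$, Proposition~\ref{prop: basic properties of Y nhoods}(iv) applied to each good $K_r(\mathbf{2})$ sitting inside $W$ (i.e.\ each choice $\bigsqcup_{i}U_i$ with $U_i\in W_i^{(2)}$ and $\{u_1,u_2\}\not\subseteq\bigsqcup_iU_i$) rules out $|N^{(a+1)}_W(z)|\geq 3r-1$ via a short case analysis, yielding $|N^{(a+1)}_W(z)|\leq 3r-2$ and hence $P(z,W)\leq(a+1)^{3r-2}a^2$ (using Proposition~\ref{prop: basic properties of Y nhoods}(iii) to exclude multiplicities below $a$).

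With these bounds in hand, I would consider the weighted geometric mean
\[p:=p_G(x)^{x_{\star}(r+1,1)}\prod_{w\in W}p_G(w)^{(1-x_{\star}(r+1,1))/(3r)},\]
exactly as in the proof of Lemma~\ref{lemma: if contain good Kr structure, then done}. Each $v\in V(G)\setminus(\{x\}\cup W)$ then contributes at most $a\left(\frac{a+1}{a}\right)^{(r-1+x_{\star}(r+1,1))/r}$ to $p$; the decisive pointwise inequality for $z\in Z$ is the $R=3$, $d=1$, $i=1$ instance of condition~\eqref{eq: R-condition}, namely $a^3\leq(a+1)^2(a-1)$, which holds for all $a\geq 2$. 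The internal contributions from edges in $W^{(2)}$ and from the edges $xw$ ($w\in W$) differ from the good $K_r(\mathbf{3})$ case by only a single edge, so they affect $p$ by an $O(1)$ multiplicative factor, absorbed into the $o(n)$ error. We conclude $p\leq a^n\left(\frac{a+1}{a}\right)^{((r-1+x_{\star}(r+1,1))/r)n+o(n)}$; since $p$ is a weighted geometric mean of $p_G(u)$ over $u\in\{x\}\cup W$, some vertex in $\{x\}\cup W$ is product-poor.

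The main obstacle is the case analysis in step (iii): showing that no $z\in Z$ can have $3r-1$ or more $(a+1)$-neighbours in $W$. The idea is that for any candidate ``missing'' vertex $w^*\in W$ one can still exhibit an explicit $2$-subset choice from each $W_i$ avoiding both $w^*$ and the forbidden pair $\{u_1,u_2\}$, yielding a good $K_r(\mathbf{2})$ entirely inside $N^{(a+1)}_W(z)$ and contradicting Proposition~\ref{prop: basic properties of Y nhoods}(iv). The analogous analysis in (i) is easier, and (ii) requires no change from the good case.
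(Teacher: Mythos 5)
Your proposal is correct, and it takes a genuinely different route from the paper. The paper's own proof of this lemma, modelled on that of Lemma~\ref{lemma: t geq r-1}, WLOG assumes $G[Y]$ has no good $K_r(\mathbf{3})$, splits into the cases $r=2$ and $r\geq 3$, introduces a taxonomy of vertex types (Y1--Y3 and Z1--Z2) with associated proportions $\theta_i,\phi$, derives a constraint on $\theta_1+\theta_3$ via Lemma~\ref{lemma: if a+1 neighbourhoods are sparse, done}, and then finishes with the optimisation Lemma~\ref{lemma: if exists lowdeg vertex in YcupZ then done}. You instead rerun the weighted geometric mean argument of Lemma~\ref{lemma: if contain good Kr structure, then done} verbatim, observing that the absence of a single $(a+1)$-edge inside $W$ does not degrade any of the three pointwise contribution bounds: for $y\in Y$ the $K_{r+1}$-freeness of $G^{(a+1)}[Y]$ still forces $|N_W^{(a+1)}(y)|\leq 3r-3$ because a bad transversal missing $\{u_1,u_2\}$ can always be extracted when $y$ misses at most two vertices of $W$; for $z\in Z$ the good $K_r(\mathbf{2})$ needed to invoke Proposition~\ref{prop: basic properties of Y nhoods}(iv) can always be chosen inside $W$ avoiding both the $\leq 1$ non-neighbour of $z$ and the pair $\{u_1,u_2\}$; and for $v\in X\setminus\{x\}$ the cloning property gives the same $P(v,W)=(a+1)^{3r}$. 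The decisive numerical inequality $a^3\leq (a+1)^2(a-1)$ for $a\geq 2$ is exactly the $R=3$, $d=1$ instance of~\eqref{eq: R-condition}, as in Remark~\ref{remark: bound on R}. Your approach buys uniformity across $r$, avoids the proportions $\theta_i,\phi$ entirely, dispenses with the WLOG reduction to ``no good $K_r(\mathbf 3)$ in $Y$'', and does not require Lemma~\ref{lemma: if exists lowdeg vertex in YcupZ then done}; the paper's route is more of a template reused across Lemmas~\ref{lemma: t geq r-1}--\ref{lemma: almost good is good enough} and interacts with the surrounding structure of Section~\ref{section: finding r-partite}. Both are valid.
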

\begin{proof} This is somewhat similar to the proof of Lemma~\ref{lemma: t geq r-1}. Let $U_i=\{u_{i,1}, \ldots, u_{i, 3}\}$, $i\in [2]$, and $W_i=\{w_{i,1}, \ldots, w_{i, 3}\}$, $i\in [r-2]$ be $r$ disjoint sets in $Y$ such that all edges from $U_i$ to $W_j$ and all edges from $U_1$ to $U_2$ except $u_{1, 3}u_{2, 3}$ have multiplicity $a+1$, and all other edges inside $U=U_1\cup U_2$ and $W=\bigcup_{i=1}^{r-2}W_i$ have multiplicity $a$ (so $U\cup W$ induces an almost good $K_r(\mathbf{3})$ in $G[Y]$).

\textbf{Case 1: $\mathbf{r=2}$}.  Let $p_U$ be the geometric mean of the $p_{Y\cup Z}(u)$, $u\in U$. Since $G^{(a+1)}[Y]$ is $K_3$-free, for every vertex $y\in Y$ we must have that $N^{(a+1)}(Y)\cap U$ is a subset of one of $U_1$ or $U_2$ or $\{u_{1,3},u_{2,3}\}$. In particular $y$ can send at most three edges of multiplicity $a+1$ into $U$, and can only do so if $N^{(a+1)}(y)\cap U= U_i$ for some $i\in [2]$. Further, if $N^{(a+1)}(y)\cap U= U_i$ for some $i\in [2]$ and $y\notin U_{3-i}$, then $y$ must send an edge of multiplicity $a-1$ into $U_{3-i}$, for otherwise $\{y, u_{1, 3-i}, u_{2,3-i}\}\cup U_i$ induces a good copy of $K_2(\mathbf{3})$ in $G$, a contradiction.

Summarising our observations in the paragraph above, every $y\in Y\setminus U$ sends either (a) exactly three edges of multiplicity $a+1$ and at least (in fact exactly) one edge of multiplicity $a-1$ into $U$, or (b) at most two edges of multiplicity $a+1$ into $U$. Since $(a-1)(a+1)<a^2$, it follows that the contribution to $p_U$ of each $y\in Y\setminus U$ is at most $a\left(\frac{a+1}{a}\right)^{\frac{1}{2}-\frac{1}{6}}< a\left(\frac{a+1}{a}\right)^{\frac{1}{2}}$.

On the other hand, every $z\in Z$ can send at most  four edges of multiplicity $a+1$ into $U$;  indeed suppose this was not the case and $z$ sent at least five edges of multiplicity $a+1$ into $U$. Then one can choose $2$-vertex subsets $U_1'\subset U_1$ and $U_2'\subset U_2$ such that $U'=U_1'\cup U_2'$ induces a good copy of $K_2(\mathbf{2})$ and $z$ sends edges of
multiplicity $a+1$ into all vertices of $U'$,  contradicting Proposition~\ref{prop: basic properties of Y nhoods}~(v).

 The contribution of each $z\in Z$ to $p_U$ is thus at most $a\left(\frac{a+1}{a}\right)^{\frac{2}{3}}$. By geometric averaging, it follows that there exists $u\in U$ with
\begin{align*}
p_{Y\cup Z}(u)& \leq p_U< a^{(1-\alpha)n}\left(\frac{a+1}{a}\right)^{\frac{1}{2}\beta n +\frac{2}{3}(1-\alpha-\beta)n+O(1)},
\end{align*}
whence by Lemma~\ref{lemma: if exists lowdeg vertex in YcupZ then done} $G$ contains a product-poor vertex and we are done.

\textbf{Case 2: $\mathbf{r\geq3}$.} Let $p_U$ and $p_W$ be the geometric-means of the product-degrees $p_{Y\cup Z}(v)$ over $v\in U$ and $v\in W$ respectively. Let us now consider the contribution of $y\in Y\setminus \left(U\cup W\right)$ to $p_U$ and $p_W$.  Note that since $G^{(a+1)}[Y]$ is $K_{r+1}$-free, every such vertex $y$ can send edges of multiplicity $a+1$ to at most $r-1$ of the parts $U_1$, $U_2$, $W_1, \ \ldots ,\ W_{r-2}$.  It follows that each such vertex $y$ must fall within one of the following mutually exclusive types.


\begin{itemize}
	\item \textbf{Type Y1:} $y$ send edges of multiplicity $a+1$ to all of $W$. Then there is some part $U_i$, $i\in [2]$ to which $y$ sends no edge of multiplicity $a+1$. If all edges from $y$ to this part $U_i$ have multiplicity $a$, then we note that at least one of the edges from $y$ to $U_{3-i}$ must have multiplicity $a-1$, since otherwise $\left(U\setminus \{u_{3, i}\}\right)\cup \{y\}\cup W$ (which is a subset of $Y$) induces a good copy of $K_{r}(\mathbf{3})$, a contradiction.

	Summarising, $y$ sends either (a)  at most two edges of multiplicity $a+1$ into $U$, or (b) exactly three edges of multiplicity $a+1$ and at least (in fact, exactly) one edge of multiplicity $a-1$ into $U$. Since $(a+1)(a-1)<a^2$, it follows that $y$'s contribution to $p_U$ is at most $a\left(\frac{a+1}{a}\right)^{\frac{1}{3}}$, while its contribution to $p_W$ is $a+1$;
	
	\item \textbf{Type Y2:} $y$ sends edges of multiplicity $a+1$ to at least four vertices in $U$ --- and in particular to both parts $U_1$ and $U_2$. As we observed, this implies there is some part $W_i$, $i\in [r-2]$, such that  $y$ sends no edge of multiplicity $a+1$ into $W_i$. In particular $y$ sends at most $3(r-3)$ edges of multiplicity $a+1$ to $W$, whence its contributions to $p_U$ and $p_W$ are at most $(a+1)$ and $a\left(\frac{a+1}{a}\right)^{1-\frac{1}{r-2}}$ respectively;
	
	\item \textbf{Type Y3}: $y$ sends at most three edges of multiplicity $a+1$ into $U$ and at most $3(r-2)-1$ edges of multiplicity $a+1$ into $W$, whence its contributions to $p_U$ and $p_W$ are at most $a\left(\frac{a+1}{a}\right)^{\frac{1}{2}}$ and $a\left(\frac{a+1}{a}\right)^{1-\frac{1}{3(r-2)}}$ respectively.
\end{itemize}
We now turn our attention to $z\in Z$. Recall that by Proposition~\ref{prop: basic properties of Y nhoods}~(iv), all edges from $z$ to $U\cup W\subseteq Y$ have multiplicity at least $a$. We classify $z\in Z$ into two types, as follows.
\begin{itemize}
	\item \textbf{Type Z1:} $z$ sends at most four edges of multiplicity $a+1$ into $U$, whence its contributions to $p_U$ and $p_W$ are at most $a\left(\frac{a+1}{a}\right)^{\frac{2}{3}}$ and $a+1$ respectively;
	\item \textbf{Type Z2}: $z$ sends at least five edges of multiplicity $a+1$ into $U$. Then it is possible to choose size two subsets $U_1'\subset U_1$ and $U_2'\subset U_2$ such that $U_1'\cup U_2'\subseteq N^{(a+1)}(z)$ and at least one of the vertices $u_{1,3}$, $u_{2,3}$ is missing from the $4$-set $U_1'\cup U_2'$.  Now $U_1'\cup U_2'\cup W$ induces a good copy of $K_r(\mathbf{2}^{(2)} \mathbf{3}^{(r-2)})$ in $G[Y]$. By Proposition~\ref{prop: basic properties of Y nhoods}~(v), we know that $z$ can send at most $2r-1$ edges into a good copy of $K_r(\mathbf{2})$ lying inside $G[Y]$. This implies that there is some part $W_i$, $i\in [r-2]$, such that $z$ sends at most one edge of multiplicity $a+1$ into $W_i$. In particular,  $z$ can send at most $3(r-3)+1$ edges of multiplicity $a+1$ into $W$ in total. Its contributions to $p_U$ and $p_W$ are thus at most $a+1$ and $a\left(\frac{a+1}{a}\right)^{1-\frac{2}{3(r-2)}}$ respectively. 
\end{itemize}
For $i\in [3]$ let $\theta_i$ be the proportion of vertices in $Y\setminus\left(U\cup W\right)$ of Type Yi, and let $\phi$ be the proportion of vertices in $Z$ of type Z1. Then, using $\vert Y\vert =\beta n$, $\vert U\cup W\vert =O(1)$ and $\theta_1+\theta_2+\theta_3=1$, the contribution to $p_W$ from vertices $y\in Y$ is at most
\begin{align*}
\left(\prod_{y\in Y}p_W(y)\right)^{\frac{1}{\vert W\vert }}&< (a+1)^{\vert U\cup W\vert} \left(\prod_{y\in Y\setminus \left(U\cup W\right)}p_W(y)\right)^{\frac{1}{3(r-2)}}\\
& \leq a^{\beta n}\left(\frac{a+1}{a}\right)^{\left(\theta_1+\theta_2\left(1-\frac{1}{r-2}\right)+\theta_3\left(1-\frac{1}{3(r-2)}\right)\right)\beta n+O(1)}= a^{\beta n}\left(\frac{a+1}{a}\right)^{\left(\frac{r-3}{r-2} +\frac{\theta_1+\frac{2}{3}\theta_3}{r-2}\right)\beta n+O(1)}.
\end{align*}
If $\frac{r-3}{r-2} +\frac{\theta_1+\frac{2}{3}\theta_3}{r-2} \leq \frac{r-2}{r-1}$, then by geometric averaging there is some vertex $w\in W$ with $p_Y(w)\leq a\left(\frac{a+1}{a}\right)^{\frac{r-2}{r-1}\beta n+O(1)}$, which by Lemma~\ref{lemma: if a+1 neighbourhoods are sparse, done} implies $G$ contains a product-poor vertex, and so we are done. Rearranging terms, we may thus assume that 
\begin{align}\label{eq: almost lemma, theta1 + theta3 bound}
 (r-1)\left(\theta_1+\frac{2}{3}\theta_3\right) > 1.
\end{align}
Now consider the quantity $p$ given by $p:=\left(p_U\right)^{\frac{2}{r}}\left(p_W\right)^{\frac{r-2}{r}}$. Substituting the upper bounds we derived on the contributions of vertices of Types Y1--Y3 and Z1--Z2 to $p_U$ and $p_W$, we see that
\begin{align*}
&\frac{p}{a^{\vert Y\cup Z\vert}} =\left(\frac{p_U}{a^{\vert Y\cup Z\vert}}\right)^{\frac{2}{r}}\left(\frac{p_W}{a^{\vert Y\cup Z\vert}}\right)^{\frac{r-2}{r}}\\
&<\left(\frac{a+1}{a}\right)^{\vert U \cup W\vert + \vert Y\setminus (U\cup W)\vert \left(\frac{2}{r}\left(\frac{\theta_1}{3}+ \theta_2+ \frac{\theta_3}{2}\right)+\frac{r-2}{r}\left(\theta_1+\frac{\theta_2(r-3)}{r-2}+ \frac{\theta_3(3(r-2)-1)}{3(r-2)}\right)\right) + \vert Z\vert \left(\frac{2}{r}\left(\frac{2\phi}{3}+ 1-\phi\right)+ \frac{r-2}{r}\left(\phi + \frac{(1-\phi)(3(r-2)-2}{3(r-2)}\right)\right)}\\
&=\left(\frac{a+1}{a}\right)^{\left(\frac{r-1}{r}-\frac{(\theta_1+\theta_3)}{3r}\right)\beta n +\left(\frac{r-1}{r}+\frac{1}{3r}\right)(1-\alpha -\beta)n+O(1)}= \left(\frac{a+1}{a}\right)^{\frac{r-1}{r}(1-\alpha )n-\frac{1}{3r}\left(\beta(\theta_1+\theta_3) -(1-\alpha-\beta)\right)n +O(1)}.
\end{align*} 
Since $\beta >\frac{r-1}{r}$ by~\eqref{eq: beta bound} and since $(r-1)(\theta_1+\theta_3)>1$ by~\eqref{eq: almost lemma, theta1 + theta3 bound}, $p$ is at most $a^{(1-\alpha )n}\left(\frac{a+1}{a}\right)^{\frac{r-1}{r}(1-\alpha)n +O(1)}$. By weighted geometric averaging, some vertex $v\in U\cup W$ satisfies $p_{Y\cup Z}(v)\leq p$, whence  $G$ contains a product-poor vertex by Lemma~\ref{lemma: if exists lowdeg vertex in YcupZ then done}, and we are done.
\end{proof}
With Lemma~\ref{lemma: almost good is good enough} in hand, we shift our perspective slightly. Recall  we had shown in Lemma~\ref{lemma: t geq r-1} that $G[Y]$ contains a good $K_{r}(\mathbf{2}^{(1)}\mathbf{3}^{(r-1)})$. It follows that there is a $3(r-1)$-set $W$ in $G$ such that $G[W]$ induces a good $K_{r-1}(\mathbf{3})$ and such that the joint neighbourhood \[N_W:=\bigcap_{w\in W} N^{(a+1)}(w)\]
contains a good $K_2(1,2)$ (with $x$ corresponding to the part of size $1$). Note that $W$ and $N_W$ are disjoint (since $G[W]$ contains edges of multiplicity $a$).

We now prove three lemmas about $N_W$ to conclude our proof. All of these will be proved by weighted geometric averaging arguments reminiscent of those used in Lemma~\ref{lemma: if contain good Kr structure, then done}. Let $C_5$ denote the $5$-cycle and $P_4$ denote the path on $4$ vertices (i.e. the graph obtained from $C_5$ by deleting one of the vertices). 
\begin{lemma}\label{lemma: good C5 in NW}
If $N_W$ contains a good $C_5$, then $G$ contains a product-poor vertex.	
\end{lemma}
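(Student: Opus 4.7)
The plan is to produce a product-poor vertex by a weighted geometric averaging argument over the $1 + 3(r-1) + 5 = 3r+3$ vertices of $\{x\} \cup W \cup C$, in the style of Lemmas~\ref{lemma: if contain good Kr structure, then done} and~\ref{lemma: almost good is good enough}. Write $C = \{c_1, \ldots, c_5\}$ for the vertex set of the good $C_5$ (with cycle-edges of multiplicity $a+1$ and chord-edges of multiplicity $a$). A first structural observation is the following: since $G^{(a+1)}$ is $K_{r+2}$-free (from the $(r+2,\Sigma_{r+1,1}(a,r+2))$-property) and any single-vertex selection from each part of $W$ forms a $K_{r-1}$ in $G^{(a+1)}$, for every vertex $v$ of $G$ that is $(a+1)$-adjacent to all of $W$ the $(a+1)$-neighbours of $v$ inside $C$ must form an independent set of the $C_5$, hence a set of size at most $\alpha(C_5)=2$. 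In particular $|C \cap Y| \leq 2$; moreover, if $v \in N_W \cap Y$ then any $(a+1)$-neighbour of $v$ in $C$ must lie in $C \setminus Y$, since otherwise $\{v, x, c\} \cup \{w_1, \ldots, w_{r-1}\}$ (with $c \in C \cap Y$ and one $w_i$ per part of $W$) would form a good $K_{r+2}$ in $G^{(a+1)}$.

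Set $x_{\star} := x_{\star}(r+1,1)$ and define
\[
p := p_G(x)^{x_{\star}} \prod_{w \in W} p_G(w)^{\frac{1-x_{\star}}{3r}} \prod_{c \in C} p_G(c)^{\frac{1-x_{\star}}{5r}}.
\]
The weights sum to $1$, with totals $x_{\star}$, $\tfrac{(r-1)(1-x_{\star})}{r}$ and $\tfrac{1-x_{\star}}{r}$ assigned to $\{x\}$, $W$ and $C$ respectively --- mirroring the sizes of the canonical parts $V_0$, $V_1\sqcup\cdots\sqcup V_{r-1}$ and $V_r$ in the product-extremal multigraphs of $\mathcal{T}_{r+1,1}(a,n)$. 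Swapping the order of products, $p$ factorises (up to an $O(1)$ factor from within-set edges) as a product of per-vertex contributions $\phi_v$ over $v \in V(G) \setminus (\{x\} \cup W \cup C)$; the main task is to establish the per-vertex bound $\phi_v \leq a\left(\tfrac{a+1}{a}\right)^{(r-1+x_{\star})/r}$. Once this is in hand, $p \leq a^n\left(\tfrac{a+1}{a}\right)^{(r-1+x_{\star})n/r + o(n)}$, and since $p$ is a weighted geometric mean of the product-degrees $p_G(u)$ over $u \in \{x\} \cup W \cup C$, some such $u$ must be product-poor, as required.

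The per-vertex bound on $\phi_v$ would be proved by case analysis on the pattern of edge-multiplicities from $v$ into $\{x\} \cup W \cup C$, combining: (i) the $K_{r+2}$-freeness of $G^{(a+1)}$, which forbids the $(a+1)$-neighbours of $v$ inside $\{x\}\cup W\cup C$ from containing a copy of $K_{r+1}$ in $G^{(a+1)}[\{x\} \cup W \cup C]$; (ii) the $C_5$-independence constraint above; and (iii) the clone property for $(a-1)$-edges from $G \in \mathcal{H}$. A direct verification checks that model vertices corresponding to each canonical part $V_i$ of the extremal construction saturate the bound with equality, which is the expected consistency check for the chosen weighting.

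The main obstacle I foresee is the borderline sub-case of a vertex $v \in N_W \cap Y$ that is $(a+1)$-adjacent to two chord-separated vertices $c, c' \in C\setminus Y$: a direct computation shows that $\phi_v$ for such $v$ exceeds the target by an additive $\tfrac{2(1-x_{\star})}{5r}$ in the $\tfrac{a+1}{a}$-exponent. Resolving this will likely require either an asymmetric weighting on $C$ that distinguishes vertices of $C \cap Y$ from those of $C \setminus Y$ and exploits $|C \cap Y| \leq 2$, or an additional structural argument showing that such a $v$, together with $x$ and appropriate representatives from $W$ and from $C$, produces an almost good $K_r(\mathbf{3})$ in $Y$ --- contradicting Lemma~\ref{lemma: almost good is good enough} and thereby ruling the case out.
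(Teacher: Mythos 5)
Your proposal identifies a genuine strategy (a weighted geometric mean over a small set of vertices, with per-vertex contribution bounds), and you correctly detect that your chosen weighting fails. But the proof as written does not close: you end with a borderline sub-case that you show exceeds the target by an additive $\tfrac{2(1-x_{\star})}{5r}$ in the exponent, and you offer only two speculative remedies without carrying either through. The second remedy (finding an almost-good $K_r(\mathbf{3})$ in $Y$) cannot be applied as stated, since you explicitly place the two problematic $C$-vertices in $C\setminus Y$, so they are not available to build a structure inside $N^{(a+1)}(x)=Y$. This is a real gap, not a routine verification.

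The paper's proof sidesteps the difficulty with a genuinely different weighting. Rather than assigning $x$ its own weight $x_{\star}$ and giving $C$ the small weight $\tfrac{1-x_{\star}}{r}$ of a single part $V_r$, the paper averages only over $W\cup C$ and gives $C$ the much larger weight $\tfrac{1+(r-1)x_{\star}(r+1,1)}{r}$, i.e.\ the combined weight of $V_0\cup V_r$. It sets
\[p:= \Bigl(\prod_{v\in W}p(v)\Bigr)^{\frac{1-x_{\star}(r+1,1)}{3r}} \Bigl(\prod_{v\in C}p(v)\Bigr)^{\frac{1+(r-1)x_{\star}(r+1,1)}{5r}}.\]
With this choice, the dangerous vertices that send a near-complete $(a+1)$-fan into $W$ and hence land in an independent set of the $C_5$ (missing at least $3$ of the $5$ vertices of $C$) incur a penalty scaled by the large $C$-weight, which is already enough to keep their contribution at or below the target; this is the paper's Case 2. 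The hardest situation --- a vertex sending $(a+1)$-edges to all or all but one vertex of $W$ --- is handled in Case 3 by a structural argument: such a vertex, if it sends an $(a+1)$-edge to some $c\in C$, must also send an $(a-1)$-edge to a $C_5$-neighbour of $c$, since otherwise $N^{(a+1)}(c)$ contains a good or almost-good $K_r(\mathbf{3})$ (contradicting Lemma~\ref{lemma: almost good is good enough}). This $(a-1)$-edge provides the needed multiplicative saving. Your instinct to include $x$ explicitly and mirror the canonical part sizes of $\mathcal{T}_{r+1,1}(a,n)$ is natural, but the $C_5$ is structurally playing the role of $V_0\cup V_r$ here, and the weighting has to reflect that to make the per-vertex bounds go through.
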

\begin{proof}
By Lemmas~\ref{lemma: if contain good Kr structure, then done} and~\ref{lemma: almost good is good enough}, we may assume that for every vertex $v\in V$, the $(a+1)$-neighbourhood of $v$ contain no good or almost good $K_r(\mathbf{3})$. Let $C$ denote the vertex-set of a good $C_5$ in $N_W$. Consider the quantity 	
\[p:= \left(\prod_{v\in W}p(v)\right)^{\frac{1-x_{\star}(r+1,1)}{3r}} \left(\prod_{v\in C}p(v)\right)^{\frac{1+(r-1)x_{\star}(r+1,1)}{5r}}.\]
Observe that $p$ is just a weighted geometric mean of the product-degrees of the vertices in $C\cup W$. Consider now a vertex $v\in V\setminus \left( C\cup W\right)$. We have three cases to consider.

\textbf{Case 1.} If $v$ sends at most $3(r-2)$ edges of multiplicity $a+1$ into $W$, then the contribution of $v$ to $p$ is at most 
$a\left(\frac{a+1}{a}\right)^{\frac{r-1+x_{\star}(r+1,1)}{r}}$.

\textbf{Case 2.} If on the other hand $v$ sends exactly $3(r-2)+1$ edges of multiplicity $a+1$ into $W$, then, since $G$ contains no good $K_{r+2}$, we have that $v$ can send at most two edges of multiplicity $a+1$ into $C$ (indeed otherwise $v$ sends an edge of multiplicity $a+1$ to each of the parts of $W$ and to both ends of an edge in $N_W$). The contribution of $v$ to $p$ is thus at most
\begin{align*}
a\left(\frac{a+1}{a}\right)^{1-\frac{2(1-x_{\star}(r+1,1))}{3r}-\frac{3(1+(r-1)x_{\star}(r+1,1))}{5r}}.
\end{align*}
Since
\begin{align*}
\frac{2(1-x_{\star}(r+1,1))}{3r}+\frac{3(1+(r-1)x_{\star}(r+1,1))}{5r}-\frac{1-x_{\star}(r+1,1)}{r}=\frac{4+(9r-4)x_{\star}(r+1,1)}{15r}>0,
\end{align*}
it follows that $v$ contributes (strictly) less than $a\left(\frac{a+1}{a}\right)^{\frac{r-1+x_{\star}(r+1,1)}{r}}$ to $p$.

\textbf{Case 3.} Finally if  $v$ sends at least $3(r-2)+2$ edges of multiplicity $a+1$ into $W$, then it cannot send any edge of multiplicity $a-1$ into $W$ (recall that vertices joined by such edges must be clones of each other, and observe that $v$ cannot be the clone of any vertex in $W$ as all vertices in $W$ send exactly $3(r-2)$ edges of multiplicity $a+1$ into $W$).

 Suppose $v$ sends an edge of multiplicity $a+1$ into some vertex $c\in C$. If $v$ sends an edge of multiplicity $a+1$ into both vertices of $N^{(a+1)}(c)\cap C$, then $G^{(a+1)}[W\cup C\cup\{v\}]$ contains a copy of $K_{r+2}$, contradicting Proposition~\ref{prop: basic properties of Y nhoods}(i). Thus $v$ sends at least one one edge of multiplicity at most $a$ into $N^{(a+1)}(c)\cap C$; if it sends edges of multiplicity at least $a$ into both vertices of  $N^{(a+1)}(c)\cap C$, then $W\cup \{v\}\cup C$ contains a good or almost good $K_r(\mathbf{3})$, again a contradiction. So $v$ must send an edge of multiplicity $a-1$ to one of the vertices in $N^{(a+1)}(c)\cap C$. Since $(a+1)^2(a-1)>a^3$ for $a\geq 3$ this gives a larger contribution to $p$ than if $v$ sent only edges of multiplicity $a$ into $C$. Using also the inequality $(a+1)^2(a-1)<(a+1)a^2$, we can thus upper-bound the contribution of $v$ to $p$ by
	\begin{align*}
	a\left(\frac{a+1}{a}\right)^{1-\frac{4}{5r}\left(1+(r-1)x_{\star}(r+1,1)\right)}.
	\end{align*}	
Now for all $a\geq 2$, we have $(a+1)^3(a-1)>a^3$, and thus
\begin{align*}
\frac{4}{5r}\left(1+(r-1)x_{\star}(r+1,1)\right)-\frac{1-x_{\star}(r+1,1)}{r}=\frac{1}{5r}\frac{\log\left(\frac{(a+1)^{3r}(a-1)^r}{a^{4r}}\right)}{\log\left(\frac{(a+1)^{r+1}}{(a-1)^ra}\right)}>0,
\end{align*}
whence in this last case again $v$ contributes at most $a\left(\frac{a+1}{a}\right)^{\frac{r-1+x_{\star}(r+1,1)}{r}}$ to $p$.

It follows from our case analysis  that $p\leq a^{n}\left(\frac{a+1}{a}\right)^{\frac{r-1+x_{\star}(r+1,1)}{r}n+O(1)}$, whence by geometric averaging one of the vertices in $C\cup W$ is product-poor, and we are done.
\end{proof}

\begin{lemma}\label{lemma: good P4 in NW}
If $N_W$ contains a good $P_4$, then $G$ contains a product-poor vertex.
\end{lemma}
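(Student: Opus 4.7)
My plan is to mirror the weighted geometric mean argument used in the proof of Lemma~\ref{lemma: good C5 in NW}, replacing the good $C_5$ with the good $P_4$. By Lemmas~\ref{lemma: if contain good Kr structure, then done}, \ref{lemma: almost good is good enough}, and~\ref{lemma: good C5 in NW}, I may assume that for every vertex $u\in V$, $N^{(a+1)}(u)$ contains no good or almost good $K_r(\mathbf{3})$, and that $N_W$ contains no good $C_5$. Let $P=\{p_1,p_2,p_3,p_4\}\subseteq N_W$ denote the vertex set of a good $P_4$ with path $p_1p_2p_3p_4$. I would introduce the weighted geometric mean
\[q := \Bigl(\prod_{v\in W}p(v)\Bigr)^{\frac{1-x_{\star}(r+1,1)}{3r}}\Bigl(\prod_{v\in P}p(v)\Bigr)^{\frac{1+(r-1)x_{\star}(r+1,1)}{4r}},\]
whose exponents sum to $1$ so that $q\geq\min_{v\in W\cup P}p(v)$. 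It would then suffice to show that every $v\in V\setminus(W\cup P)$ contributes at most $a(\frac{a+1}{a})^{(r-1+x_{\star}(r+1,1))/r}$ to $q$; geometric averaging would then produce a product-poor vertex in $W\cup P$.

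The case analysis would again split by the number of $a+1$ edges from $v$ into $W$, in direct parallel with the three cases of Lemma~\ref{lemma: good C5 in NW}. Cases~1 and~2 (at most $3(r-2)$ and exactly $3(r-2)+1$ such edges respectively) should go through with the same arithmetic: Case~2 uses that $G^{(a+1)}$ is $K_{r+2}$-free and that $\alpha(P_4)=2$ to limit $v$ to at most two $a+1$ edges into $P$, so that two vertices of $P$ receiving $a+1$ from $v$ are non-adjacent in the path. In Case~3 (at least $3(r-2)+2$ such edges, so no $a-1$ edges to $W$ by the clone property), if $vp_i=a+1$ for an internal vertex $p_i\in\{p_2,p_3\}$ of the path then $v$ must send an $a-1$ edge to one of $p_i$'s two neighbours in $P$; otherwise $W\cup\{v,p_{i-1},p_{i+1}\}$ would exhibit a good or almost good $K_r(\mathbf{3})$ in $N^{(a+1)}(p_i)$, contradicting our running assumption.

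For the endpoint vertices $p_1,p_4$ of $P_4$, the single-neighbour structure makes the direct $K_r(\mathbf{3})$ obstruction too weak, and this is where I expect the main technical difficulty. The plan is to invoke the forbidden good $C_5$ in $N_W$: if $vp_1=a+1$ with $v\in N_W\cap Y$ and $vp_2,vp_3,vp_4$ all $\geq a$, then avoiding a good $K_{r+2}$ on $\{x,v,p_1,p_2\}$ together with part-representatives of $W$ forces $vp_2=a$, and then avoiding the good $C_5$ $v{-}p_1{-}p_2{-}p_3{-}p_4{-}v$ in $N_W$ forces either $vp_3=a+1$ (triggering the internal-vertex obstruction at $p_3$) or $vp_4\neq a+1$ (leaving $v$ with only one $a+1$ edge into $P$). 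An exhaustive subcasework over the patterns of $v$'s multiplicities to $P$ and over the location of $v$ in $X,Y$ or $Z$ (using the clone constraint in the $v\notin Y$ branches) should then force each pattern either to contain an $a-1$ edge into $P$ or to have few enough $a+1$ edges into $P$. Organising this cleanly — possibly with a mild perturbation of the weights $(\alpha_W,\alpha_P)$ to absorb endpoint slack, and using the same comparisons $(a+1)(a-1)<a^2<(a+1)^2(a-1)$ that drove Case~3 of Lemma~\ref{lemma: good C5 in NW} — is the main obstacle. Once the per-vertex bound is in place, geometric averaging yields $q\leq a^n(\frac{a+1}{a})^{((r-1+x_{\star}(r+1,1))/r)n+o(n)}$, producing the desired product-poor vertex.
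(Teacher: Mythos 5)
Your overall strategy matches the paper's: assume no good or almost good $K_r(\mathbf{3})$ in any $(a+1)$-neighbourhood and no good $C_5$ in $N_W$, take a weighted geometric mean $p$ of the product-degrees of the $W$-vertices and the $P_4$-vertices, then do a case analysis on the number of $(a+1)$-edges a vertex $v$ sends into $W$ and argue that in every case $v$'s contribution to $p$ is at most the Case-1 benchmark. You correctly identify the key obstructions ($K_{r+2}$-freeness ruling out $(a+1)$-edges to both ends of a path-edge, the clone constraint propagating $a-1$ edges, and the forbidden good $C_5$ in $N_W$ ruling out $(a+1)$-edges to both endpoints when $v\in N_W$).

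The main divergence — and the place where you correctly anticipate the difficulty — is the choice of weights on the $P_4$. The paper does \emph{not} weight the four $P_4$-vertices uniformly: it assigns weight $\frac{1+(r-1)x_\star}{6r}$ to the two endpoints and the \emph{double} weight $\frac{1+(r-1)x_\star}{3r}$ to the two middle vertices. This asymmetric weighting is precisely the ``mild perturbation'' you suspect may be required ``to absorb endpoint slack'': it makes the worst-case Case-4 pattern (namely $(a+1,a,a,a)$, when $v\in N_W$ sends an $(a+1)$-edge to exactly one path-endpoint) incur penalty $\bigl(\tfrac{a+1}{a}\bigr)^{-5(1+(r-1)x_\star)/(6r)}$, which combined with the extra $\bigl(\tfrac{a+1}{a}\bigr)^{(1-x_\star)/r}$ from $W$ gives a clean closed-form factor $\bigl(\tfrac{a+1}{a}\bigr)^{(1-(5r+1)x_\star)/(6r)}<1$, reducing all numerical checks in Cases 2--4 to the inequality $(a+1)(a-1)<a^2$ plus a single comparison $(a+1)^4(a-1)>a^5$. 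With your proposed uniform weights the same case analysis in fact still closes, but the binding constraint degrades to $(a+1)^2(a-1)>a^3$ (i.e.\ $(3r+1)x_\star(r+1,1)>1$), which holds for $a\ge 2$ but with a tighter margin; you would need to carefully verify that no other pattern (in particular a pattern with an $(a+1)$-edge to a middle vertex forcing an $a-1$ edge nearby and then the clone propagation fixing the rest) slips through, and you never actually carry out this case-enumeration — you explicitly flag it as ``the main obstacle.'' In short, you have identified all the right ingredients and the critical difficulty, but the proof as written leaves the decisive quantitative step unverified; the asymmetric weighting is the clean fix that resolves it.
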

\begin{proof}
By Lemmas~\ref{lemma: if contain good Kr structure, then done} and~\ref{lemma: almost good is good enough}, we may assume that for every vertex $v\in V$, the $(a+1)$-neighbourhood of $v$ contains no good or almost good $K_r(\mathbf{3})$. Further by Lemma~\ref{lemma: good C5 in NW} we may assume that $N_W$ does not contain a good $C_5$. 

Let $U=\{u_1,u_2, u_3, u_4\}$ induce a $P_4$ in $N_W$, with $u_1$ and $u_4$ being the ends of the path, and  $u_2, u_3$ the two middle vertices. Consider the quantity
\[p:= \left(\prod_{v\in W}p(w)\right)^{\frac{1-x_{\star}(r+1,1)}{3r}} \Bigl(p(u_1)p(u_4)\Bigr)^{\frac{1+(r-1)x_{\star}(r+1,1)}{6r}} \Bigl(p(u_2)p(u_3)\Bigr)^{\frac{1+(r-1)x_{\star}(r+1,1)}{3r}},\]
which is a weighted geometric mean of the product-degrees of the vertices in $U\cup W$.

Consider a vertex $v\in V\setminus \left( C\cup W\right)$. We have four cases to consider.

\textbf{Case 1.} If $v$ sends at most $3(r-2)$ edges of multiplicity $a+1$ into $W$, then the contribution of $v$ to $p$ is at most 
$a\left(\frac{a+1}{a}\right)^{\frac{r-1+x_{\star}(r+1,1)}{r}}$.

\textbf{Case 2.} If $v$ sends exactly $3(r-2)+1$ edges of multiplicity $a+1$ into $W$, then, as $G$ is $K_{r+2}$-free, $v$ cannot send edges of multiplicity $a+1$ to both ends of an edge in $N_W$. In particular, the set of vertices in $U$ it sends edges of multiplicity $a+1$ to must be a subset of one of the pairs $\{u_1, u_3\}$, $\{u_2, u_4\}$, $\{u_1, u_4\}$. It follows from this that the contribution of $v$ to $p$ is at most the maximum contribution recorded in Case 1 multiplied by a factor of
\begin{align*}
\left(\frac{a+1}{a}\right)^{\frac{1-x_{\star}(r+1,1)}{3r}} \left(\frac{a+1}{a}\right)^{-\frac{1+(r-1)x_{\star}(r+1,1)}{2r}}<1.
\end{align*}

 \textbf{Case 3.} If $v$ sends exactly $3(r-2)+2$ edges of multiplicity $a+1$ into $W$, then the last edge it sends into $W$ must have multiplicity $a$ (it cannot be $a-1$, since $v$ clearly cannot be the clone of a vertex in $W$: vertices in $W$ only send $3(r-2)$ edges of multiplicity $a+1$ into $W$).

  Suppose $v$ sends an edge of multiplicity $a+1$ into one of the middle vertices $\{u_2, u_3\}$ of $U$, say $u_2$. Then $v$ must send an edge of multiplicity $a-1$ to one of $u_2$'s neighbours $u_1$ and $u_3$, as otherwise $N^{(a+1)}(u_2)$ contains an almost good $K_r(\mathbf{3})$.   If $vu_1$ has multiplicity $a-1$, then $v$ is a clone of $u_1$ and hence we have 
  \begin{align*}w(vu_1)=a-1, && w(vu_2)= a+1, && w(vu_3)=a, && w(vu_4)=a.
  \end{align*} 
  On the other hand if $vu_3$ has multiplicity $a-1$, then $v$ is a clone of $u_3$ and   \begin{align*}w(vu_1)=a, && w(vu_2)= a+1, && w(vu_3)=a-1, && w(vu_4)=a+1.
  \end{align*} 
  Finally if $v$ fails to send any edge of multiplicity $a+1$ into $\{u_2, u_3\}$, then
  \begin{align*}w(vu_1)\leq a+1, && w(vu_2)\leq a, && w(vu_3)\leq a, && w(vu_4)\leq a+1.
  \end{align*} 
  Plugging these three different bounds on the multiplicities of edges from $v$ to $U$ into the definition of $p$, we see that in Case 3, the contribution of $v$ to $p$ is at most that recorded in Case 1 multiplied by a factor of
  \begin{align*}
  \left(\frac{a+1}{a}\right)^{\frac{2\left(1-x_{\star}(r+1,1)\right)}{3r}} \left(\frac{a+1}{a}\right)^{-\frac{2\left(1+(r-1)x_{\star}(r+1,1)\right)}{3r}}<1,
  \end{align*}
 attained if $w(vu_1)=w(vu_4)=a+1$ and $w(vu_2)=w(vu_3)=a$.

 \textbf{Case 4.} If $v$ sends edges of multiplicity $a+1$ to all $3(r-1)$ vertices in $W$, then we have two possibilities to consider.

 If $v$ sends an edge of multiplicity $a+1$ into one of the middle vertices of $U$, say $u_2$, then, as in the Case 3, it must send an edge of multiplicity $a-1$ into one of $\{u_1, u_3\}$, so that we have
 \begin{align}\label{eq: case 4 prodbound1}
 \left(w(vu_1)w(vu_4)\right)\cdot\left(w(vu_2)w(vu_3)\right)^2=(a-1)a^3(a+1)^2&&\textrm{ or } && (a-1)^2a(a+1)^3.
 \end{align}
 On the other hand, suppose $v$ does not send an edge of multiplicity $a+1$ into the middle vertices $\{u_2, u_3\}$ of $U$. Then both $vu_2$ and $vu_3$ must have multiplicity exactly $a$ --- indeed otherwise $v$ would have to send an edge of multiplicity $a-1$ to one of $\{u_2, u_3\}$, say $u_2$, which would imply $v$ is a clone of $u_2$ and thus sends an edge of multiplicity $a+1$ to $u_3$, a contradiction. Since $N_W$ does not contain a good $C_5$, this implies that $v$ can send an edge of multiplicity $a+1$ into at most one of the end-vertices $\{u_1, u_4\}$ and
 \begin{align}\label{eq: case 4 prodbound2}
 \left(w(vu_1)w(vu_4)\right)\cdot\left(w(vu_2)w(vu_3)\right)^2\leq a^5(a+1).
 \end{align} 
Using our bounds~\eqref{eq: case 4 prodbound1} and~\eqref{eq: case 4 prodbound2} on the contribution to $p$ of edges from $v$ to $U$ and the fact that $a^5(a+1)>\max\left\{(a-1)a^3(a+1),  (a-1)^2a(a+1)^3\right\}$, we see that $v$'s contribution to $p$ is at most that recorded in Case 1 multiplied by a factor of
  \begin{align*}
 \left(\frac{a+1}{a}\right)^{\frac{\left(1-x_{\star}(r+1,1)\right)}{r}} \left(\frac{a+1}{a}\right)^{-\frac{5\left(1+(r-1)x_{\star}(r+1,1)\right)}{6r}}=\left(\frac{a+1}{a}\right)^{\frac{1-(5r+1)x_{\star}(r+1,1)}{6r}}<1,
 \end{align*}
with the last inequality following from the fact that for all $a\geq 2$, $(a+1)^4(a-1)>a^5$ and hence
\begin{align*}
(5r+1)x_{\star}(r+1,1)-1=\frac{\log\left(\frac{(a+1)^{4r}(a-1)^r}{a^{5r}}\right)}{\log\left(\frac{(a+1)^{r+1}}{(a-1)^ra}\right)}>0.
\end{align*}
 Since in each of  Cases 1--4 the contribution to $p$ is at most that recorded in Case 1, we get that 
 \begin{align*}
 p\leq  a^n \left(\frac{a+1}{a}\right)^{\frac{r-1+x_{\star}(r+1,1)}{r}n+o(n)}.
 \end{align*}
 By geometric averaging, it follows that one of the vertices in $U\cup W$ is product-poor, and we are done.
\end{proof}

\begin{lemma}\label{lemma: exists good P4 in NW}
	Either $N_W$ contains a good $P_4$ or $G$ contains a product-poor vertex.
\end{lemma}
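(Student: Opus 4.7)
The plan is to assume $N_W$ contains no good $P_4$ and produce a product-poor vertex by a weighted geometric averaging argument over the good $K_{r+1}(\mathbf{1},\mathbf{2},\mathbf{3}^{(r-1)})$ structure induced by $\{x, u_1, u_2\}\cup W$. By Lemma~\ref{lemma: good C5 in NW} I may further assume $N_W$ contains no good $C_5$; by Lemmas~\ref{lemma: if contain good Kr structure, then done} and~\ref{lemma: almost good is good enough}, that $G[Y]$ contains no good or almost good $K_r(\mathbf{3})$; and $G^{(a+1)}$ is $K_{r+2}$-free since $G\in\mathcal{H}(n,2r+2,\Sigma_{r+1,1}(a,2r+2))$. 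A key preliminary observation is that $G^{(a+1)}[N_W\cap Y]$ is edgeless: an $(a+1)$-edge $v_1 v_2$ in $N_W\cap Y$ would, together with $x$ and one vertex from each part $W_j$, give $r+2$ vertices pairwise joined by edges of multiplicity $a+1$, contradicting $K_{r+2}$-freeness of $G^{(a+1)}$.

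Next I would classify $v\in (N_W\cap Y)\setminus\{u_1,u_2\}$ by the pair $(w(vu_1), w(vu_2))$. The case $(a,a)$ is forbidden, as the partition $\{v, u_1, u_2\}, W_1,\ldots, W_{r-1}$ would then be a good $K_r(\mathbf{3})$; this leaves three types, namely the \emph{balanced} Type~A with $(a+1,a+1)$, the \emph{mixed} Type~B with exactly one coordinate equal to $a+1$ and the other equal to $a$, and clone configurations in which one coordinate equals $a-1$. Crucially, if $v_A$ is of Type~A and $v_B$ is of Type~B then the induced $(a+1)$-graph on $\{u_1, u_2, v_A, v_B\}$ (noting that $v_A v_B$ has multiplicity $a$ by the edgelessness observation above) is precisely a $P_4$, yielding a good $P_4$ in $N_W$ and contradicting our hypothesis. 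Hence only one of Type~A or Type~B occurs in $N_W\cap Y$. Analogous consequences of the no-good-$P_4$ and no-good-$C_5$ assumptions, together with Proposition~\ref{prop: basic properties of Y nhoods}~(iii)--(iv) and $(2r+2)$-set sum constraints applied to sets of the form $\{v,v',u_1,u_2\}\cup (\text{two vertices from each }W_j)$, will impose corresponding restrictions on $(a+1)$-edges from $N_W\cap Z$ to $\{u_1, u_2\}$ and on clone pairs inside $N_W$.

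I would then consider the weighted geometric mean
\[p:=\bigl(p(x)\bigr)^{\alpha_x}\,\bigl(p(u_1)p(u_2)\bigr)^{\alpha_U/2}\,\Bigl(\prod_{w\in W}p(w)\Bigr)^{\alpha_W/3(r-1)},\]
with non-negative weights $\alpha_x, \alpha_U, \alpha_W$ summing to $1$, chosen --- in the style of Lemmas~\ref{lemma: good C5 in NW} and~\ref{lemma: good P4 in NW} and via the recurrence $x_\star(r+1,1)=\frac{r-1+x_\star(r+1,1)}{r}x_\star(r,1)$ --- so that the target exponent $\frac{r-1+x_\star(r+1,1)}{r}$ is attainable. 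For each $v\in V\setminus (\{x,u_1,u_2\}\cup W)$ I would bound its contribution to $p$ by case analysis on: (a) the number of $(a+1)$-edges from $v$ to $W$ (constrained by $K_{r+2}$-freeness and Proposition~\ref{prop: basic properties of Y nhoods}~(i)); (b) the multiplicity $w(xv)$, placing $v$ in $X$, $Y$ or $Z$; and (c) the multiplicities of $vu_1, vu_2$, governed by the Type~A/Type~B dichotomy for $v\in N_W\cap Y$ and by Proposition~\ref{prop: basic properties of Y nhoods}~(iv) for $v\in Z$. Showing each such contribution is at most $a\bigl(\tfrac{a+1}{a}\bigr)^{(r-1+x_\star(r+1,1))/r}$ will give $p\leq a^n\bigl(\tfrac{a+1}{a}\bigr)^{\frac{r-1+x_\star(r+1,1)}{r}n+o(n)}$ and hence a product-poor vertex in $\{x,u_1,u_2\}\cup W$ by geometric averaging.

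The hard part will be the case analysis for vertices of ``intermediate'' type --- those sending $(a+1)$-edges to most but not all of $W$ and interacting non-trivially with $\{u_1,u_2\}$ --- where tight bounds require extracting further structural consequences of the no-good-$P_4$ and no-good-$C_5$ hypotheses via $5$-set or $6$-set arguments reminiscent of those used in Lemmas~\ref{lemma: good C5 in NW} and~\ref{lemma: good P4 in NW}. Careful handling of clone pairs (edges of multiplicity $a-1$) in $N_W$, which the no-good-$P_4$ constraint may force to appear in certain mixed configurations, will add further delicacy; and identifying the correct values of $\alpha_x, \alpha_U, \alpha_W$ as functions of $x_\star(r+1,1)$ so that every per-vertex bound simultaneously meets the target threshold is the central technical task.
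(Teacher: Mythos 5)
Your overall skeleton matches the paper's: assume no good $P_4$ in $N_W$, form a weighted geometric mean of the product-degrees over $\{x,u_1,u_2\}\cup W$ with weights chosen along the lines of Lemma~\ref{lemma: if contain good Kr structure, then done}, and show every outside vertex contributes at most $a\bigl(\tfrac{a+1}{a}\bigr)^{(r-1+x_\star(r+1,1))/r}$. The paper uses precisely this $p$ with $\alpha_x=x_\star(r+1,1)$, $\alpha_U=\frac{1-x_\star(r+1,1)}{r}$, $\alpha_W=\frac{(r-1)(1-x_\star(r+1,1))}{r}$.

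However, the concrete structural work you propose is directed at the wrong vertices, and the piece you do work out is almost vacuous. Your Type~A ($(a+1,a+1)$ to $U$) and Type~B (one $(a+1)$-edge to $U$) for $v\in N_W\cap Y$ are already killed by $K_{r+2}$-freeness alone: any $v\in N_W\cap Y$ is an $(a+1)$-neighbour of $x$ and of every $w\in W$, so if it also sent an $(a+1)$-edge to $u_i$ then $\{x,v,u_i\}$ together with a transversal of $W_1,\dots,W_{r-1}$ would be $r+2$ pairwise $(a+1)$-vertices. So neither Type~A nor Type~B can occur, and the ``Type~A vs.~Type~B incompatibility'' your $P_4$ argument proves is an empty statement; the only surviving possibility for $v\in N_W\cap Y$ is that $v$ is a clone of $u_1$ or $u_2$. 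The genuine use of the no-good-$P_4$ hypothesis in the paper is instead for vertices $v$ with $w(xv)=a$ (i.e.\ $v\in Z$): if all edges from $v$ into $U\cup W$ have multiplicity $\geq a$ and all but one have multiplicity $a+1$, then either the single $a$-edge lands in $U$ and $\{x,u_1,u_2,v\}$ induces a good $P_4$ in $N_W$, or it lands in $W$ and $\{x,v\}\cup U\cup W$ contains a good $K_{r+1}(\mathbf{2})$ (violating the $(2r+2)$-set bound). Either way $v$ must have at least two ``slack'' edges into $U\cup W$, which is exactly the bound needed to close Case~3. You gesture at constraints on $N_W\cap Z$ but leave this as the acknowledged ``hard part''; since this is the single observation that makes the lemma go through (and the choice of weights and per-vertex bounds is routine once you have it), the proposal has a genuine gap at the decisive step. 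A minor further point: the no-good-$C_5$ hypothesis you import is not used or needed in the paper's proof of this lemma — it only enters via Lemma~\ref{lemma: good C5 in NW} in the proof of Lemma~\ref{lemma: good P4 in NW}.
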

\begin{proof}	
As we have shown, either $G$ contains a product-poor vertex or $N_W$ contains a good $K_2(1,2)$. Let $\{x\}$ and $U=\{u_1, u_2\}$ be the vertex-sets corresponding to the two parts in this $K_2(1,2)$. Suppose that $N_W$ does not contain a good $P_4$.  We shall show this implies $G$ contains a product-poor vertex.

Note that by Lemmas~\ref{lemma: if contain good Kr structure, then done} and~\ref{lemma: almost good is good enough} we may assume that the $(a+1)$-neighbourhood of $x$ (indeed, of any vertex) does not contain a good or almost good $K_r(\mathbf{3})$.

Let $p_W$ denote the geometric mean of the product-degrees of the vertices from $W$ and set $p_U:=\sqrt{p(u_1)p(u_2)}$. Consider the quantity
\[p:=\Bigl(p(x)\Bigr)^{x_{\star}(r+1,1)}\Bigl(p_U\Bigr)^{\frac{1-x_{\star}(r+1,1)}{r}}\Bigl(p_W\Bigr)^{\frac{(r-1)(1-x_{\star}(r+1,1))}{r}}, \]
which is a weighted geometric mean of the product-degrees of the vertices in $\{x\}\cup U\cup W$.  Much as in the proof of Lemma~\ref{lemma: if contain good Kr structure, then done} we shall show that $p$ cannot be too large. Indeed, consider a vertex $v \in V\setminus \left(\{x\}\cup U\cup W\right)$.

\textbf{Case 1.} If $v$ sends an edge of multiplicity $a-1$ to $x$, then its contribution to $p$ is exactly \begin{align*}
(a-1)^{x_{\star}(r+1,1)}(a+1)^{1-x_{\star}(r+1,1)}=a\left(\frac{a+1}{a}\right)^{\frac{r-1+x_{\star}(r+1,1)}{r}}.
\end{align*}

\textbf{Case 2.} If $v$ sends an edge of multiplicity $a+1$ to $x$, then it can send edges of multiplicity $a+1$ to at most $r-1$ of the parts of the good $K_r(\mathbf{2}^{(1)}\mathbf{3}^{(r-1)})$ induced by $U\cup W$ (for otherwise we would have a good $K_{r+2}$ in $G$). Thus its contribution to $p$ is at most
\begin{align*}
a\left(\frac{a+1}{a}\right)^{1-\frac{1-x_{\star}(r+1,1)}{r-1}} = a\left(\frac{a+1}{a}\right)^{\frac{r-1+x_{\star}(r+1,1)}{r}}.
\end{align*}

\textbf{Case 3.} If $v$ sends an edge of multiplicity $a$ to $x$, then we claim it sends either (a) an edge of multiplicity $a-1$, or (b) at least $2$ edges of multiplicity $a$ into $U\cup W$.

Indeed, suppose neither of these occurs, i.e. that all edges from $v$ to $U\cup W$ have multiplicity at least $a$, and that all but at most one have multiplicity $a+1$. If all these edges have multiplicity $a+1$, then $G^{(a+1)}[\{x,v\}\cup U\cup W]$ contains a copy of $K_{r+2}$, contradicting Proposition~\ref{prop: basic properties of Y nhoods} part (i). Thus we may assuyme that $v$ sends exactly one edge of multiplicity $a$ into $U\cup W$. If this edge of multiplicity $a$ is to a vertex in $U$, then $U\cup\{x,v\}$ induces a good $P_4$ in $N_W$, a contradiction.  On the other hand if the edge of multiplicity $a$ is to a vertex in $W$, then $\{x,v\}\cup U\cup W$ contains a good $K_{r+1}(\mathbf{2})$, contradicting $G\in \mathcal{H}(n,2r+2, \Sigma_{r+1,1}(a, 2r+2))$.

It readily follows that the contribution of $v$ to $p$ is at most
\begin{align*}
a \left(\frac{a+1}{a}\right)^{1-x_{\star}(r+1,1)-\frac{2(1-x_{\star}(r+1,r))}{3r}}& <a \left(\frac{a+1}{a}\right)^{\frac{r -1 +x_{\star}(r+1,r))}{r}},
\end{align*}
with the inequality following from the fact that
\begin{align*}
\frac{1}{3r} \left(1-x_{\star}(r+1,1)\right)- x_{\star}(r+1, 1)=-\frac{1}{3r}\frac{\log\left(\frac{(a+1)^{2r}(a-1)^r}{a^{3r}}\right)}{\log\left(\frac{(a+1)^{r+1}}{(a-1)^ra}\right)}<0
\end{align*}
for all $a\geq 2$.

 Since in every case the contribution to $p$ is at most $a \left(\frac{a+1}{a}\right)^{\frac{r -1 +x_{\star}(r+1,r))}{r}}$, it follows that
\begin{align*}
p\leq  a^n \left(\frac{a+1}{a}\right)^{\frac{r-1+x_{\star}(r+1,1)}{r}n+o(n)}.
\end{align*}
By geometric averaging, one of the vertices in $\{x\}\cup U\cup W$ is product-poor, and we are done.	
\end{proof}

Combining Lemmas~\ref{lemma: t>0}, \ref{lemma: t geq r-1}, \ref{lemma: exists good P4 in NW} and \ref{lemma: good P4 in NW} we see that irrespective of the value of $t$, $G$ must contain a product-poor vertex.  This concludes the proof of the inductive step.
\end{proof}

\section{Further questions and conjectures}\label{section: other questions and conjectures}
\noindent There is much work yet to be done on the Mubayi--Terry problem. We discuss below some of the more promising directions we see for future research.

\textbf{Other cases of of Conjecture~\ref{conjecture: main conjecture}.} The most obvious open problem is that of the remaining cases of Conjecture~\ref{conjecture: main conjecture}. With the techniques developed in this paper, we suspect resolving the $d=2$ case (by working out how to handle a broader range of possible edge multiplicities in putative extremal constructions) could lead to a resolution of the full conjecture. One possible path towards this would be some appropriate refinement of Lemma~\ref{lemma: if exists lowdeg vertex in YcupZ then done} which takes into account the fact that $Z$ is split up into the $d-1$ sets $Z_i$, $i=0, 1, \ldots, d-1$ with $Z_i=N^{(a-i)}(x)$. The special case $(s,q)= (5, \binom{5}{2}a+4)$ with $a\geq 3$ is the smallest open case, and would provide a good testing ground for such refinements.

\textbf{Stability and exact values.} We strongly believe that the equality $\mathrm{ex}_{\Pi}(n, 2r, \Sigma_{r,1}(a, 2r))=\Pi_{r,1}(a, n)$ holds for all $a, r\geq 2$ and all $n$ sufficiently large, so that the asymptotic equality we established in Theorem~\ref{theorem: d=1 asymptotic} is not the last word even in the case of Conjecture~\ref{conjecture: main conjecture} treated in this paper. A natural step towards such an exact result would be to obtain a stability result for Theorem~\ref{theorem: d=1 asymptotic} showing almost product-extremal $G$ in $\mathcal{F}(n, 2r, \Sigma_{r,1}(a, 2r))$ must lie close in edit distance to product-extremal graphs from $\mathcal{T}_{r,1}(a,n)$.

We have not attempted to prove such a result in this paper, which is already overly long and technical. However we suspect a partial stability result can be extracted from our proof.  Indeed, one can show by a simple vertex-removal argument that an almost product-extremal $G'$ in $\mathcal{H}(n, 2r, \Sigma_{r,1}(a, 2r))$ must contain at most $o(n)$ strictly product-poor vertices. The case analyses in the proofs of our Lemmas in Section~\ref{section: finding r-partite} then imply that all but $o(n)$ vertices must fall within one given type (since usually only a small subset of the types give an optimal contribution to the various $p$, $p_u$ and $p_W$ quantities we consider in our averaging arguments, while the other types give strictly worse contributions). Such information could be used to characterise the large-scale structure of $G$. The main challenge would be then to show that almost product-extremal multigraphs $G$ from the larger family $\mathcal{F}(n, 2r, \Sigma_{r,1}(a, 2r))$ lie close in edit distance to some multigraph  $G'\in \mathcal{F}(n, 2r, \Sigma_{r,1}(a, 2r))$: it is not immediately obvious how to obtain such a stability version of Proposition~\ref{prop: reducing to good graphs with clique decompositions} .

\textbf{Other values of $\mathbf{(s,q)}$.} The next most obvious open problem is to resolve what happens for $(s,q)$ when $q$ is not of the form $q=\Sigma_{r,d}(a,s)$ for some  integers $r\geq 1$ and $a>d\geq 0$. In~\cite[Construction 12.1]{DayFalgasRavryTreglown20+}, Day, Falgas-Ravry and Treglown considered `iterated' versions of Construction~\ref{construction: lower bound} --- multigraphs obtained by taking a graph from $\mathcal{T}_{r,d}(a,n)$, replacing the special part $V_0$ by a multigraph from $\mathcal{T}_{r', d'}(a-d, \vert V_0\vert)$, and repeating this procedure. This gives lower-bound constructions for additional pairs $(s,q)$ not covered by Conjecture~\ref{conjecture: main conjecture}. The authors of ~\cite{DayFalgasRavryTreglown20+} asked whether these were asymptotically tight. Given the previous work of F\"uredi and K\"undgen (in which similar iterated constructions appear, albeit with very different and much simpler relative part sizes) and the work in the present paper, it is tempting to guess that the answer to this question might be affirmative. The smallest test case of this may be $(s,q)=(6, \binom{6}{2}a+8)$ for $a\geq 2$.

\textbf{Asymptotically flat intervals.}	Even with the iterated constructions above and $a$ large, there are still values of $(s,q)$ which do not have their ``own'' lower-bound constructions, but only constructions that are also valid for $(s,q-1)$. We think these pairs may correspond to intervals in which the value of $\mathrm{ex}_{\Pi}(s,q)$ does not change as we change the value of $q$ (keeping $s$ fixed).

Our intuition is based on the special case $q_0=\Sigma_{r,0}(a,s)$: if we want to make any non-trivial increase to the asymptotic product of the edge multiplicities, and we restrict ourselves to `iterated' versions of Construction~\ref{construction: lower bound}, we must introduce at least one new part to our construction, which in turn suggests the maximum edge-sum over $s$-sets must increase by at least $\left\lfloor \frac{s-1}{r}\right\rfloor$. Thus for any $q$ with $q_0\leq q< q_0 + \lfloor \frac{s-1}{r}\rfloor$ there should be no other asymptotically different constructions available than those from $\mathcal{T}_{r,0}(a,n)$. Formally, this yields:	
\begin{conjecture}\label{conjecture: non-jumps}
For every $r,a\in \mathbb{N}$ and for every $s\geq 2r+1$, we have
\[\mathrm{ex}_{\Pi}(s, \Sigma_{r,0}(a,s))=\mathrm{ex}_{\Pi}(s, \Sigma_{r,0}(a,s)+1)=\ldots   =\mathrm{ex}_{\Pi}\left(s, \Sigma_{r,0}(a,s)+\Bigl\lfloor \frac{s-1}{r}\Bigr\rfloor-1\right)= a^{\frac{1}{r}}(a+1)^{\frac{r-1}{r}}.\]
\end{conjecture}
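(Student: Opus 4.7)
The lower bound in Conjecture~\ref{conjecture: non-jumps} is immediate: by monotonicity of $\mathrm{ex}_{\Pi}(s,\cdot)$ combined with the $d=0$ case of Conjecture~\ref{conjecture: main conjecture} (proved in~\cite[Theorem 3.10]{DayFalgasRavryTreglown20+}), we have $\mathrm{ex}_{\Pi}(s,\Sigma_{r,0}(a,s)+k) \geq a^{1/r}(a+1)^{(r-1)/r}$ for every $k\geq 0$. The whole content of the conjecture therefore lies in the matching upper bound throughout the range $0\leq k\leq \lfloor(s-1)/r\rfloor-1$, which I propose to obtain by a direct combination of the Füredi--Kündgen formula~\cite{FurediKundgen02} with the integral AM--GM inequality of Proposition~\ref{prop: integral AM-GM}.

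The key arithmetic step is to verify that
\[
\mathrm{ex}_{\Sigma}\bigl(s,\Sigma_{r,0}(a,s)+k\bigr) = a + \frac{r-1}{r}\qquad\textrm{for every } 0\leq k\leq \Bigl\lfloor\frac{s-1}{r}\Bigr\rfloor-1.
\]
Substituting $m=a+(r-1)/r$ into the Füredi--Kündgen formula $\mathrm{ex}_{\Sigma}(s,q) = \min\{m\in\mathbb{Q}:\sum_{i=1}^{s-1}\lfloor 1+mi\rfloor > q\}$ and using $\lfloor(r-1)i/r\rfloor = i-\lceil i/r\rceil$ yields
\[
\sum_{i=1}^{s-1}\Bigl\lfloor 1+\Bigl(a+\frac{r-1}{r}\Bigr)i\Bigr\rfloor \,=\, a\binom{s}{2}+(s-1)+\binom{s}{2}-\sum_{i=1}^{s-1}\Bigl\lceil\frac{i}{r}\Bigr\rceil \,=\, \Sigma_{r,0}(a,s)+\Bigl\lfloor\frac{s-1}{r}\Bigr\rfloor,
\]
where the second equality is a case analysis on the residue $j = s\bmod r$: writing $s=pr+j$ with $0\leq j<r$ gives the closed forms $\binom{s}{2}-t_r(s)=p(rp+2j-r)/2$ (from the balanced $r$-partition) and a layered evaluation of $\sum_{i=1}^{s-1}\lceil i/r\rceil$, which combine to yield the claim. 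Moreover, for $m$ infinitesimally below $a+(r-1)/r$ exactly the $\lfloor(s-1)/r\rfloor$ floors indexed by $i$ with $r\mid i$ decrement by $1$ (at $m=a+(r-1)/r$ these are precisely the terms for which $1+mi$ is an integer), bringing the sum down to $\Sigma_{r,0}(a,s)$. Hence no $m<a+(r-1)/r$ produces a sum exceeding $\Sigma_{r,0}(a,s)+k$ in the claimed range, pinning down $\mathrm{ex}_{\Sigma}(s,\Sigma_{r,0}(a,s)+k) = a+(r-1)/r$ exactly.

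Granted this, for any $G\in\mathcal{F}(n,s,\Sigma_{r,0}(a,s)+k)$ we have $\sum_e w(e)\leq (a+(r-1)/r)\binom{n}{2}+o(n^2)$, which I rewrite as $a\binom{n}{2}+T$ with $0\leq T\leq (r-1)\binom{n}{2}/r + o(n^2)$ (the trivial case $\sum w(e)<a\binom{n}{2}$ gives $P(G)<a^{\binom{n}{2}}$ via plain AM--GM, already beating the target since $a<a^{1/r}(a+1)^{(r-1)/r}$). Applying Proposition~\ref{prop: integral AM-GM} to the $\binom{n}{2}$-tuple of edge-weights now gives
\[
P(G) \,\leq\, a^{\binom{n}{2}-T}(a+1)^T \,\leq\, a^{\binom{n}{2}/r + o(n^2)}(a+1)^{(r-1)\binom{n}{2}/r + o(n^2)},
\]
so $P(G)^{1/\binom{n}{2}} \leq a^{1/r}(a+1)^{(r-1)/r}(1+o(1))$. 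Taking the limit $n\to\infty$ matches the lower bound and establishes the conjecture.

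The main work is thus the Füredi--Kündgen arithmetic identity in the first step; while elementary, it splits into the $r$ cases $j\in\{0,\ldots,r-1\}$ and requires careful tracking of floors. Assuming this routine bookkeeping goes through as sketched, the rest of the argument is entirely off-the-shelf, using only tools already cited in the present paper. If correct, this would indicate that Conjecture~\ref{conjecture: non-jumps} lies on the tractable side of the Mubayi--Terry landscape: the deep combinatorial content of the problem really is concentrated at the jump points between successive flat intervals (as captured by Conjecture~\ref{conjecture: main conjecture} and the iterated constructions discussed above), while the flat intervals themselves follow from the interplay between the sum-extremal theory of~\cite{FurediKundgen02} and integrality.
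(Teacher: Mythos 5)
The statement you were asked to prove is Conjecture~\ref{conjecture: non-jumps}, which the paper explicitly poses as an \emph{open} problem in the final section; the paper offers no proof of it beyond noting that Mubayi and Terry settled the case $r=1$. There is therefore no ``paper's own proof'' to compare against. What follows is a substantive check of your argument on its own merits.

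\textbf{Assessment.} Your argument appears to be correct, and strikingly simple given that the paper leaves the statement open for $r\geq 2$. The two pillars are exactly right.

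First, the Füredi--Kündgen arithmetic. The identity $\lfloor (r-1)i/r\rfloor = i - \lceil i/r\rceil$ is correct, and the claimed evaluation
\[
\sum_{i=1}^{s-1}\Bigl\lfloor 1+\Bigl(a+\tfrac{r-1}{r}\Bigr)i\Bigr\rfloor = a\binom{s}{2}+(s-1)+\binom{s}{2}-\sum_{i=1}^{s-1}\Bigl\lceil\tfrac{i}{r}\Bigr\rceil=\Sigma_{r,0}(a,s)+\Bigl\lfloor\tfrac{s-1}{r}\Bigr\rfloor
\]
can be verified by writing $s-1=pr+\rho$, $0\leq\rho<r$, and computing $\sum_{i=1}^{s-1}\lceil i/r\rceil = r\binom{p+1}{2}+(p+1)\rho$, $\lfloor(s-1)/r\rfloor=p$, and $t_r(s)=\binom{s}{2}-j\binom{p+1}{2}-(r-j)\binom{p}{2}$ (with $j\equiv s\bmod r$); the two sides match in both the $\rho<r-1$ and $\rho=r-1$ cases. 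You are also right that $1+(a+(r-1)/r)i\in\mathbb{Z}$ iff $r\mid i$, so exactly $\lfloor(s-1)/r\rfloor$ floors decrement as $m$ crosses $a+(r-1)/r$ from above, dropping the sum to $\Sigma_{r,0}(a,s)$. Since $m\mapsto\sum_i\lfloor1+mi\rfloor$ is non-decreasing, this pins $\mathrm{ex}_{\Sigma}(s,\Sigma_{r,0}(a,s)+k)=a+(r-1)/r$ for all $0\leq k\leq\lfloor(s-1)/r\rfloor-1$, as you claim.

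Second, the integral AM--GM step. Since $\mathrm{ex}_{\Sigma}(n,s,q)/\binom{n}{2}$ is non-increasing in $n$ (by averaging $S(X)$ over $n$-subsets $X$ of a larger vertex set), we in fact have $\mathrm{ex}_{\Sigma}(n,s,q)\leq (a+(r-1)/r+o(1))\binom{n}{2}$, so $T=e(G)-a\binom{n}{2}$ satisfies $T\leq\frac{r-1}{r}\binom{n}{2}+o(n^2)<\binom{n}{2}$ for large $n$, and the hypothesis $0\leq T\leq\binom{n}{2}$ of Proposition~\ref{prop: integral AM-GM} holds (the case $T<0$ being trivial as you note). Since $a^{\binom{n}{2}-T}(a+1)^T$ is increasing in $T$, the resulting bound is $P(G)^{1/\binom{n}{2}}\leq a^{1/r}(a+1)^{(r-1)/r}(1+o(1))$, which matches the lower bound $\Pi_{r,0}(a,n)^{1/\binom{n}{2}}\to a^{1/r}(a+1)^{(r-1)/r}$ coming from Proposition~\ref{prop: value of xstar}(ii) with $x_{\star}(r,0)=1/r$ and monotonicity of $\mathrm{ex}_{\Pi}(s,\cdot)$.

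\textbf{Comparison and upshot.} Your approach circumvents the structural analysis of near-extremal multigraphs entirely: you do not need Sections~\ref{section: preliminaries, props of near extremal multigraphs}--\ref{section: finding r-partite}, only the Füredi--Kündgen theorem and the trivial direction of AM--GM sharpened to integer weights. The paper's authors motivate the conjecture via constructions and suggest revisiting Mubayi and Terry's $r=1$ argument; you instead observe that the sum-extremal density $a+(r-1)/r$ is itself sub-integral of width $(r-1)/r$ below $a+1$, and that this automatically forces the geometric mean down to $a^{1/r}(a+1)^{(r-1)/r}$ regardless of the shape of the extremal graph. I have not found a gap. If the bookkeeping is as routine as the spot checks suggest, this resolves the conjecture with tools already cited in the paper, and the observation is worth recording: the flat intervals of $\mathrm{ex}_{\Pi}(s,\cdot)$ require no hard structural work, only that $\mathrm{ex}_{\Sigma}(s,\cdot)$ be flat on the same window. (The restriction $s\geq 2r+1$ in the conjecture is not actually needed for your argument; $s\geq r+1$ suffices for both the lower-bound result of \cite[Theorem 3.10]{DayFalgasRavryTreglown20+} and the Füredi--Kündgen identity.)
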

Mubayi and Terry showed in~\cite[Theorem 3]{MubayiTerry20} that Conjecture~\ref{conjecture: non-jumps} holds for $r=1$. Revisiting their work may provide a path towards proving Conjecture~\ref{conjecture: non-jumps}. Beyond that, there may be other asymptotically flat intervals where the value of $\mathrm{ex}_{\Pi}(s,q)$ does not change as we increase $q$ --- the meta-conjecture should perhaps be that for any $s$ and all $q$ large enough, $\mathrm{ex}_{\Pi}(s,q)$ is the maximum of the asymptotic product density $P(G)^{1/\binom{n}{2}}$ over $G$ belong to the collection of `iterated versions of Construction~\ref{construction: lower bound} on $n$ vertices with the $(s,q)$-property', but we are currently quite far from a position in which we could confidently put forward such a statement.

\textbf{Reducing to the base case in Conjecture~\ref{conjecture: main conjecture}.} Finally, it would be nice to improve~\cite[Theorem 3.11]{DayFalgasRavryTreglown20+} by getting rid of the ``$a$ sufficiently large'' condition, so that showing $\mathrm{ex}_{\Pi}(n,s, \Sigma_{r,d}(a,s))=\Pi_{r,d}(a,n)^{1+o(1)}$ holds for $s=(r-1)(d+1)+2$ and $a=d+1$ ensures it holds for all $s\geq (r-1)(d+1)+2$ and $a\geq d+1$ (if this statement is true!).

\section*{Acknowledgements}
The author would like to express his gratitude to two anonymous referees whose careful work helped greatly improve the clarity and correctness of the paper, and in particular for their pointing out two subtle mistakes in the original argument.

\end{document}